\newtheorem{lemma}{Lemma}[section]
\newtheorem{corollary}[lemma]{Corollary}
\newtheorem{proposition}[lemma]{Proposition}
\newtheorem{theorem}[lemma]{Theorem}
\theoremstyle{definition}
\newtheorem{definition}[lemma]{Definition}
\newtheorem{remark}[lemma]{Remark}
\newtheorem{notation}[lemma]{Notation}
\newtheorem{theoremA}{Theorem}[section]
\DeclareMathOperator\Aut{Aut}
\DeclareMathOperator\Rist{Rist}
\DeclareMathOperator\Stab{Stab}
\DeclareMathOperator\Sub{Sub}
\DeclarePairedDelimiter\abs{\lvert}{\rvert}
\DeclarePairedDelimiter\gen{\langle}{\rangle}
\newcommand*\defi[1]{\emph{#1}}
\newcommand*\GGS{\textrm{GGS}}
\newcommand*\Grig{{\mathcal G}}
\newcommand*\level[1]{\mathcal L_{#1}}
\newcommand*\setst[2]{\{#1\,|\,#2\}}
\newcommand*\subd{\leq_{\mathrm s}}
\newcommand*\restr[2]{#1_{\mkern 1mu \vrule height 2ex\mkern2mu #2}}
\newcommand*\Z{\mathbf{Z}}
\newcommand*\N{\mathbf{N}}
\newcommand*{\X}{\mathcal{X}}
\newcommand*{\titre}{Subgroup induction property for branch groups}
\title{\titre}
\newcommand*{\auteur}{Dominik Francoeur, Paul-Henry Leemann}
\author{\auteur}
\newcommand*{\sujet}{branch groups, finitely generated subgroups, subgroup induction theorem, GGS groups}
\date{\today}
\begin{document}
\maketitle
\begin{abstract}
The \defi{subgroup induction property} is a property of self-similar groups acting on rooted trees introduced by Grigorchuk and Wilson in 2003 that appears to have strong implications on the structure of the groups possessing it.
It was for example used in the proof that the first Grigorchuk group as well as the Gupta-Sidki $3$-group are subgroup separable (locally extended residually finite) or to describe their finitely generated subgroups as well as their weakly maximal subgroups.
However, until now, there were only two known examples of groups with this property, namely the first Grigorchuk group and the Gupta-Sidki $3$-group.

The aim of this article is twofold.
First, we investigate various consequences of the subgroup induction property for \defi{branch groups}, a particularly interesting class of self-similar groups.
Notably, we show that finitely generated branch groups with the subgroup induction property must be torsion, just infinite and subgroup separable, and we establish conditions under which all their maximal subgroups are of finite index and all their weakly maximal subgroups are closed in the profinite topology.
Then, we show that every torsion \GGS{} group has the subgroup induction property, hence providing the first infinite family of examples of groups with this property.
\end{abstract}

\section{Introduction}
Since their definition in 1997 (at the Groups St Andrews conference in Bath), branch groups (see Section~\ref{Section:Branch} for all the relevant definitions) have attracted a lot of attention.
Indeed, they are a rich source of examples of groups with exotic properties.
They also naturally appear in the classification of \defi{just infinite groups}~\cite{MR1765119}, that is infinite groups whose proper quotients are all finite.

One of the most well-known branch groups is the so-called first Grigorchuk group~$\Grig$, which was the first example of a group of intermediate growth~\cite{MR712546}.
Another well-studied family of branch groups is the family of \GGS{} groups, which consists of the Gupta-Sidki groups $G_p$ ($p$ prime) and their generalisations (see Section~\ref{Section:GGS}).
The first Grigorchuk group and the torsion \GGS{} groups have been intensively studied and share many properties: being just infinite, being finitely generated infinite torsion groups, having all maximal subgroups of finite index, and more.

In 2003, Grigorchuk and Wilson showed~\cite{MR2009443} that the Grigorchuk group $\Grig$ has a property now known as the \defi{subgroup induction property}.
Roughly speaking, a group with the subgroup induction property is a branch group such that any sufficiently nice property of the group is shared by all of its finitely generated subgroups (see Section~\ref{Section:General} for a proper definition).
They then used this fact to show that every finitely generated infinite subgroup of $\Grig$ is commensurable with~$\Grig$.
Recall that two groups $G_1$ and $G_2$ are \defi{commensurable} if there exists $H_i$ of finite index in $G_i$ with $H_1\cong H_2$.
In the same article, they also showed that $\Grig$ has the rather rare property of being subgroup separable, where $G$ is \defi{subgroup separable} (also called \defi{locally extended residually finite} or \defi{LERF}) if all of its finitely generated subgroups are closed in the profinite topology.
Among other things, this implies that $\Grig$ has a solvable membership problem, meaning that there exists an algorithm that, given $g\in G$ and a finitely generated subgroup $H\leq G$, decides whether $g$ belongs to $H$.
Indeed, this follows from subgroup separability and having an algorithm that lists all finite quotients, which is true both for $\Grig$ and \GGS{} groups.

Adapting the strategy of~\cite{MR2009443}, Garrido showed~\cite{MR3513107} that the Gupta-Sidki $3$-group $G_3$ also has the subgroup induction property, is subgroup separable and that any finitely generated  infinite subgroup $H\leq G_3$ is commensurable with~$G_3$ or with $G_3\times G_3$.

In recent years, the subgroup induction property of a branch group $G$ turned out to be a versatile tool.
For example, it was used in~\cite{MR4082048} to compute the Cantor-Bendixson rank of $\Grig$ and $G_3$, in~\cite{L2019} to describe the \defi{weakly maximal subgroups} (subgroups that are maximal for the property of being of infinite index) of $G$ and in~\cite{GLN2019} to give a characterisation of finitely generated subgroups of $G$.

The aim of this article is twofold: first, to expand the list of properties implied by the subgroup induction property, and second, to expand the list of groups known to satisfy this property.
With regard to the first goal, we consider two versions of the subgroup induction property, one a priori weaker than the other, see Subsections~\ref{SubSec:Def} and~\ref{SubSec:Alternative}.
With regard to the weak version, which we call the \defi{weak subgroup induction property}, we prove the following:
\begin{theoremA}\label{Thm:IntroWeak1}
Let $G$ be a finitely generated branch group with the weak subgroup induction property. Then $G$ is torsion and just infinite.
If moreover the intersection of all finite-index maximal subgroups of $G$ is not trivial (e.g. if $G$ is a $p$-group), then every maximal subgroup of $G$ is of finite index.
\end{theoremA}
If we moreover suppose that $G$ is self-replicating (see Definition~\ref{def:Self-Similar}), then we obtain
\begin{theoremA}\label{Thm:IntroWeak2}
Let $G\leq\Aut(T_d)$ be a finitely generated self-replicating branch group with the weak subgroup induction property and let $H$ be a finitely generated subgroup of~$G$.
Then, $H$ is commensurable with one of $\{1\}$, $G$, $G^{(\times 2)}$, \dots, $G^{(\times d-1)}$.
\end{theoremA}
Note that the article~\cite{FGLN2023} establishes conditions under which any finitely generated subgroup $H$ of $G$ possesses a so-called \defi{block structure}, from which it is easy to deduce that $H$ is commensurable with one of one of $\{1\}$, $G$, $G^{(\times 2)}$, \dots, $G^{(\times d-1)}$. However, our hypotheses for Theorem~\ref{Thm:IntroWeak2} are weaker than those of~\cite{FGLN2023}, so our result cannot be deduced from that one.

With regard to the stronger version of the subgroup induction property, which we simply call the \defi{subgroup induction property}, we show that it implies subgroup separability, among other things (see Theorems~\ref{thm:commmax} and~\ref{Thm:StrongSeparability} for more general statements).
\begin{theoremA}\label{Thm:IntroStrong}
Let $G\leq\Aut(T)$ be a finitely generated self-similar branch group with the subgroup induction property.
Then, $G$ is subgroup separable.

Suppose moreover the intersection of all finite-index maximal subgroups of $G$ is not trivial (e.g. if $G$ is a $p$-group), and let $H$ be a finitely generated subgroup of~$G$.
Then, all maximal subgroups of $H$ are of finite index in $H$ and all of its weakly maximal subgroups are closed in the profinite topology.
\end{theoremA}

Finally, the subgroup induction property also has consequences on the generalized membership problem (Corollary~\ref{Cor:GenWordProbl}), the Cantor-Bendixon rank of $G$ (Proposition~\ref{Prop:CBRank}), as well as on its cohomology (Proposition~\ref{Prop:Cohomology}).

In view of the above, it is natural to ask for examples of groups with the subgroup induction property.
Until now, the only known examples were the first Grigorchuk group $\Grig$ and the Gupta-Sidki 3-group $G_3$.
The second part of this article is devoted to providing infinitely many new examples of such groups.
\begin{theoremA}\label{Thm:Intro3}
All the torsion \GGS{} groups have the subgroup induction property.
\end{theoremA}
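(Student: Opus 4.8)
The plan is to run the section-by-section induction of Grigorchuk--Wilson \cite{MR2009443} and Garrido \cite{MR3513107} inside the general framework of Section~\ref{Section:General}. The first step is to record that a torsion \GGS{} group $G\leq\Aut(T_p)$ satisfies the standing structural hypotheses. Writing its defining vector as $\mathbf e=(e_1,\dots,e_{p-1})$, being torsion amounts to the sum-zero condition $\sum_i e_i\equiv 0\pmod p$; since $p$ is prime this forces $\mathbf e$ to be non-constant, so $G$ is a self-replicating branch $p$-group. Moreover the first-level action of $G$ is the regular cyclic action generated by the rooted automorphism $a$, so that the stabilizer of a single level-$1$ vertex already coincides with the whole first-level stabilizer; thus $\Stab_G(v)=\Stab_G(\level{1})$, and $G$ fits Theorem~\ref{Thm:Intro2} with $d=p$. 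It therefore remains to establish the subgroup induction property itself, after which the commensurability conclusion of Theorem~\ref{Thm:Intro2} (and, when the congruence subgroup property is also available, its statement on maximal subgroups) applies automatically.

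The property is inductive in nature, and I would prove it by a well-founded descent on a complexity measure of a finitely generated subgroup $H\leq G$, the precise measure being the one isolated in Section~\ref{Section:General} (combining the index of $H$ with a rank-type quantity attached to its level-$1$ sections). There are two base cases, matching the extremes built into the definition of the property: $H$ finite, reached from the trivial subgroup by finite extensions, and $H$ of finite index, where branchness guarantees that $H$ contains the derived subgroup of a rigid level stabilizer $\Rist_G(\level{n})$ and is hence commensurable with a power of $G$. For the inductive step I pass to the finite-index subgroup $H\cap\Stab_G(\level{1})$ and examine its images $\varphi_1(H\cap\Stab_G(\level{1})),\dots,\varphi_p(H\cap\Stab_G(\level{1}))$ under the $p$ projections onto the first-level subtrees. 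Self-replication ensures that these sections are again finitely generated subgroups of $G$, while the injective subdirect embedding $\Stab_G(\level{1})\hookrightarrow G\times\cdots\times G$ lets me recover $H$ from its sections together with the finite amount of data describing the subdirect product; the definition of the property is designed precisely to license this reconstruction.

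The crux, which I expect to be the main obstacle, is the descent itself: one must show, uniformly in the defining vector $\mathbf e$, that as long as $H$ has infinite index its level-$1$ sections are strictly simpler in the chosen measure, and that the only way for the measure to fail to drop is for $H$ to contain some rigid level stabilizer and hence to have finite index. This is where the arithmetic of the sum-zero condition is genuinely used, and where the present situation departs from \cite{MR2009443,MR3513107}: for $\Grig$ and $G_3$ the facts needed about $\Stab_G(\level{1})$, the rigid stabilizers, and the abelianization are short explicit computations, whereas here they must be established for an arbitrary prime $p$ and an arbitrary sum-zero vector, including the symmetric vectors for which the congruence subgroup property is unavailable. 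Concretely, I would need quantitative control over how the generators of $H$ distribute among the sections $\varphi_i(H\cap\Stab_G(\level{1}))$ and over the level at which such a section can already equal $G$, extracting from $\sum_i e_i\equiv 0$ exactly the structural input that makes the descent strict. Once this is in place, torsion ensures that every descending chain terminates at finite subgroups, which closes the induction and yields the subgroup induction property for all torsion \GGS{} groups.
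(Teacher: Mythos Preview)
Your outline captures the broad Garrido strategy, but it is missing the concrete ingredients that make the descent go through, and one of your structural remarks is incorrect.

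First, a factual slip: a torsion \GGS{} group cannot have constant defining vector (sum zero on a nonzero constant vector in $\mathbf F_p^{p-1}$ forces the vector to vanish), so every torsion \GGS{} group is branch and has the congruence subgroup property. There is no ``symmetric vector'' obstruction to worry about.

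More seriously, your complexity measure is left unspecified (``rank-type quantity''), and your terminal condition---``the only way for the measure to fail to drop is for $H$ to contain some rigid level stabilizer''---does not match what actually happens. The paper's measure is the maximal \emph{$b$-length} of a generating set of $H$ (Definition~\ref{def:BLength}), and the descent does \emph{not} terminate at finite or finite-index subgroups: it terminates at subgroups generated by elements of $b$-length at most~$1$ (Lemma~\ref{lemma:CanProjectToLength1}), which are then shown directly to lie in any (weakly) inductive class (Lemma~\ref{lemma:Length1InX}). The point is that a single level of projection halves the $b$-length only for subgroups contained in $\Stab_G(\level1)$ (type~\ref{item:HInStab}); for subgroups not in $\Stab_G(\level1)$ (type~\ref{item:HNotInStab}) one level can \emph{increase} the bound to roughly $\tfrac{3M}{2}$ (Lemma~\ref{lemma:LenghtReductionTypeIII}), and it is only after a second level that one gets $\tfrac{3M+2}{4}<M$ for $M>2$ (Lemma~\ref{lemma:LengthProjectionsStabiliser}). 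This two-step reduction relies essentially on the dichotomy of Lemma~\ref{Lemma:GGSSections}: either all first-level sections equal $G$, or they all lie in $\Stab_G(\level1)$. That dichotomy is the structural input coming from torsion, and it is absent from your outline. Finally, the case $M=2$ does not fall under the general inequality and needs its own argument (Lemma~\ref{lemma:CaseIIILength2}), controlling both $b$-length and total length simultaneously; nothing in your sketch anticipates this.
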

In fact, by Theorem~\ref{Thm:IntroWeak1} we have that a \GGS{} group has the subgroup induction property if and only if it is torsion.

\paragraph{Organisation}
The small Section~\ref{Section:Branch} quickly recalls the definitions of branch groups, self-similar groups and other related notions. The definition of the subgroup induction property and of the weak subgroup induction property are found in Subsection~\ref{SubSec:Def}.
Subsection~\ref{SubSec:wSIP} is about the consequences of the weak subgroup induction property and contains the proofs of Theorem~\ref{Thm:IntroWeak1}  and~\ref{Thm:IntroWeak2}.
The next subsection studies the subgroup induction property and proves Theorem~\ref{Thm:IntroStrong}.
The short Subsection~\ref{SubSec:Alternative} compares the relative strength of the weak subgroup induction property and of subgroup induction property.
The final section is dedicated to the proof of Theorem~\ref{Thm:Intro3}.

\paragraph{Acknowledgements}
The authors thank the anonymous reviewer for their careful reading and valuable remarks.
The authors performed this work within the framework of the LABEX MILYON (ANR-10-LABX-0070) of Universit\'e de Lyon, within the program ``Investissements d'Avenir'' (ANR-11-IDEX-0007) operated by the French National Research Agency (ANR).
The first named author was partly supported by the Leverhulme Trust Research Project Grant RPG-2022-025.
The second named author was partly supported by SNSF Grant No. $200021\_188578$ and by RDF-23-01-045-\emph{Groups acting on rooted trees and their subgroups} of XJTLU.
%
%
%
%
%
%
%
%
%
%
\section{Reminders on branch groups}\label{Section:Branch}
Let $d$ be an integer greater than $1$ and let $T$ be a $d$-regular rooted tree.
That is, $T$ is a tree with a special vertex, called the root, of degree $d$ and with all other vertices of degree $d+1$.
The set of all vertices at distance $n$ from the root is the $n$\textsuperscript{th} level of the tree and will be denoted by $\level{n}$.
There is a natural (partial) order on the vertices of $T$ defined by $w\geq v$ if the unique simple path starting from the root and ending at $w$ passes through $v$.
The root is the smallest element for this order.
We identify $T$ with the free monoid $\{1,\dots,d\}^*$ with the lexicographical order, where the children of $x_1\dots x_n$ are $x_1\dots x_ny$ for $y\in\{1,\dots,d\}$.
The set of infinite rays starting at the root is denoted by $\partial T$ and is isomorphic to the set of right-infinite words $\{1,\dots,d\}^\infty$.
There is a natural metrizable topology on $\partial T$ obtained by declaring that two rays are near if they share a long common prefix.
With this topology, the space $\partial T$ is compact.

A subset $X$ of $T$ is a \defi{transversal} (sometimes called a \defi{spanning leaf set}, a \defi{cut set} or a \defi{section}) if every ray of $\partial T$ passes through exactly one element of $X$.
It follows from the compactness of $\partial T$ that a transversal is always finite.

For any vertex $v$ of $T$, we denote by $T_v$ the subtree of $T$ consisting of all $w\geq v$.
It is naturally isomorphic\footnote{If we identify $T$ to $\{1,\dots,d\}^*$, then the identification of $T$ and $T_v$ is done by deletion of the prefix corresponding to $v$.} to $T$.

Let $G\leq \Aut(T)$ be a group of automorphisms of $T$ and let $V$ be a subset of vertices of $T$.
The subgroup $\Stab_G(V)=\bigcap_{v\in V}\Stab_G(v)$ is the pointwise stabilizer of $V$. 
The pointwise stabilizer $\Stab_G(\level{n})$ of $\level{n}$ is called the \defi{$n$\textsuperscript{th} level stabilizer}.
The \defi{rigid stabilizer} $\Rist_G(v)$ of a vertex $v$ in $G$ is the set of all elements of $G$ acting trivially outside~$T_v$.
That is, $\Rist_G(v)=\Stab_G(V)$ for $V=\setst{w\in T}{w\notin T_v}$.
Finally, the \defi{rigid stabilizer} of a level $\Rist_G(\level n)$ is the subgroup generated by the $\Rist_G(v)$ for all vertices $v$ of level $n$.
\begin{definition}
Let $T$ be a $d$-regular rooted tree.
A subgroup of $G\leq \Aut(T)$ is \defi{weakly branch} if $G$ acts transitively on the levels and all the rigid stabilizers of vertices are infinite.

If moreover all the $\Rist_G(\level n)$ have finite index in~$G$, then $G$ is said to be \defi{branch}.
\end{definition}
The class of branch groups naturally arises in the description of \defi{just infinite groups}, that is infinite groups whose quotients are all finite.
More precisely, a just infinite group is either a just infinite branch group, virtually a product of simple groups, or virtually a product of \defi{hereditary just infinite groups} (recall that a group $G$ is hereditary just infinite if all of its finite-index subgroups are just infinite), as was shown in~\cite{MR1765119}.
On the other hand, not all branch groups are just infinite.
It is thus interesting to investigate which branch groups are just infinite.

Apart from branch groups, another interesting class of subgroups of $\Aut(T)$ is the class of self-similar groups.
While these groups are of interest in their own right, they are often studied in conjunction with branch groups.
Indeed, the intersection of these two classes (that is groups that are branch and self-similar) contains a lot of interesting examples, including the first Grigorchuk group and the \GGS{} groups.
In order to introduce self-similar groups, we will need a little more notation.

Let $T$ be a $d$-regular rooted tree.
Recall that we identify the vertices of~$T$ with the free monoid $\{1,\dots, d\}^{*}$.
Thus, we have a natural operation of concatenation on the vertices of $T$.
Geometrically, given $v,w\in T$, the vertex $vw$ is the vertex of $T_v$ associated to $w$ under the natural identification between $T_v$ and $T$.

It follows from the definitions that for every automorphism $g\in \Aut(T)$ and every $v\in T$, there exists a unique automorphism $\restr{g}{T_v}\in \Aut(T)$ such that
\[g\cdot (vw) = (g\cdot v)(\restr{g}{T_v}\cdot w)\]
for all $w\in T$.
This gives us, for every $v\in T$, a map
\begin{align*}
	\varphi_v\colon \Aut(T)&\to\Aut(T)\\
	g&\mapsto \restr{g}{T_v}.
\end{align*}
This map restricts to a homomorphism between $\Stab(v)$ and $\Aut(T)$, and it restricts further to an isomorphism between $\Rist(v)$ and $\Aut(T)$, where we denote here by $\Stab(v)$ and $\Rist(v)$ the stabilizer and rigid stabilizer of $v$ with respect to the whole group $\Aut(T)$.
The element $\varphi_v(g)$ is called the \defi{section} of $g$ at $v$.
\begin{definition}\label{def:Self-Similar}
Let $T$ be a $d$-regular rooted tree. A group $G\leq \Aut(T)$ is said to be
\begin{enumerate}[(i)]
\item \defi{self-similar} if $\varphi_v(G)\leq G$ for all $v\in T$,
\item \defi{self-replicating} (or \defi{fractal}) if $\varphi_v\bigl(\Stab_G(v)\bigr)=G$ for all $v\in T$,
\item \defi{strongly self-replicating} (or \defi{strongly fractal}) if $\varphi_v\bigl(\Stab_G(\level 1)\bigr)=G$ for all $v\in \level{1}$
\item \defi{super strongly self-replicating} (or \defi{super strongly fractal}) if $\varphi_v\bigl(\Stab_G(\level n)\bigr)=G$ for all $n$ and all $v\in \level{n}$.\footnote{The prefix super is here to indicate that this property is true for all levels and not only for~$\level 1$. \GGS{} groups with constant vectors are strongly self-replicating but not super strongly self-replicating~\cite{Uria}.}%
\end{enumerate}
\end{definition}
A subgroup $H$ of $\prod_{i\in I}G_i$ is a \defi{subdirect product}, denoted $H\subd \prod_{i\in I}G_i$, if for every $i\in I$, the canonical projection $H\twoheadrightarrow G_i$ is surjective.
The notion of subdirect product plays an important role in the description of finitely generated subgroups of a group with the subgroup induction property, see~\cite{GLN2019,FGLN2023}.
%
%
%
%
%
%
%
%
%
%
%
%
\section{The subgroup induction property}\label{Section:General}
\subsection{Definitions and preleminaries}\label{SubSec:Def}
The first appearance (without a name) of the subgroup induction property dates back to~\cite{MR2009443}, where it was expressed in terms of inductive classes of subgroups.
\begin{definition}\label{Definition:StrongSubgroupInduction} Let $G\leq \Aut(T)$ be a self-similar group.
A class $\mathcal{X}$ of subgroups of $G$ is said to be \defi{inductive} if
\begin{enumerate}[(A)]
\item\label{Item:DefSubgroupInduction1}
Both $\{1\}$ and $G$ belong to $\mathcal{X}$,
\item\label{Item:DefSubgroupInduction2}If $H\leq L$ are two subgroups of $G$ with $[L:H]$ finite, then $L$ is in $\mathcal X$ if $H$ is in $\mathcal X$.
\item\label{Item:DefSubgroupInduction3}
If $H$ is a finitely generated subgroup of $\Stab_G(\level 1)$ and all first-level sections of $H$ are in $\mathcal{X}$, then $H\in \mathcal{X}$.
\end{enumerate}

The group $G$ has the \defi{subgroup induction property} if for any inductive class of subgroups $\mathcal X$, every finitely generated subgroup of $G$ is contained in $\mathcal X$.
\end{definition}

The first Grigorchuk group and the Gupta-Sidki $3$-group were shown in~\cite{MR2009443} and~\cite{MR3513107}, respectively, to possess the subgroup induction property. Up to now, these were the only groups for which this property was known to hold. Later in the present article, we will establish that it holds for all torsion \GGS{} groups (Theorem~\ref{thm:GGSHaveSubgroupInduction}).

The subgroup induction property has some fairly strong implications on the structure of a self-similar group $G$. In particular, it forces it to be strongly self-replicating, as the next proposition shows.
In the following we will use this fact without mentioning it.
\begin{proposition}\label{prop:SelfSimSIP}
Let $G\leq \Aut(T)$ be a finitely generated self-similar group with the subgroup induction property. Then, $G$ is strongly self-replicating.
\end{proposition}
\begin{proof}
Let $H\leq \Stab_G(\level 1)$ be any subgroup fixing all vertices on the first level. If there exists $v\in \level{1}$ such that $\varphi_v(H)= G$, then $G$ must be strongly self-replicating. Indeed, it follows from self-similarity that $\varphi_v(\Stab_G(\level 1))=G$, and by transitivity of the action of $G$ on $\level{1}$ that  $\varphi_w(\Stab_G(\level 1))=G$ for all $w\in \level{1}$.

Let us now suppose that $G$ is not strongly self-replicating, and let $\mathcal{X}$ be the collection of subgroups of $G$ consisting of $G$ and all of its finite subgroups. It is clear that $\mathcal{X}$ satisfies~\ref{Item:DefSubgroupInduction1} and~\ref{Item:DefSubgroupInduction2} of Definition~\ref{Definition:StrongSubgroupInduction}. If $H\leq \Stab_G(\level 1)$ is a finitely generated subgroup such that all its first-level sections are in $\mathcal{X}$, then all its first-level sections must be finite groups, since by the above discussion, no first-level section of $H$ can be equal to $G$. It follows that $H$ is finite and thus belongs to $\mathcal{X}$. Therefore, $\mathcal{X}$ is an inductive class, which is absurd since it does not contain all finitely generated subgroups of $G$.
Indeed, since $G$ is a finitely generated residually finite group, it contains finitely generated infinite proper subgroups.
\end{proof}

A slight modification of the Definition~\ref{Definition:StrongSubgroupInduction} leads to the a priori weaker notion of weak subgroup induction property, which in general does not seem to imply, at least not obviously, that the group must be strongly self-replicating. However, if the group $G$ satisfies some strong self-replicating assumption, then this weak version is equivalent to the full subgroup induction property; see Subsection~\ref{SubSec:Alternative}.
We say that a class $\mathcal{X}$ of subgroups of $G$ is \defi{strongly inductive} if it satisfies~\ref{Item:DefSubgroupInduction1},~\ref{Item:DefSubgroupInduction3} and 
\begin{enumerate}\renewcommand{\theenumi}{B'}
\renewcommand{\labelenumi}{(\theenumi)}
\item\label{Item:StrongInductive}
If $H\leq L$ are two subgroups of $G$ with $[L:H]$ finite, then $L$ is in $\mathcal X$ if and only if $H$ is in $\mathcal X$,
\end{enumerate}
\begin{definition}\label{Definition:SubgroupInduction}
The group $G$ has the \defi{weak subgroup induction property} if for any strongly inductive class of subgroups $\mathcal X$, every finitely generated subgroup of $G$ is contained in $\mathcal X$.
\end{definition}
Currently, there are no known examples of groups with the weak subgroup induction property but without the stronger version, and we do not know whether such groups can exist.
While the weak subgroup induction property will sometimes be enough for our purpose, the full strength of the subgroup induction property is sometimes needed to prove some results, see for example~\cite{FGLN2023} or Theorem~\ref{Thm:StrongSeparability}.

In~\cite{GLN2019}\footnote{Observe that in~\cite{GLN2019} what is called the ``subgroup induction property'' is the weak version.}, the second named author, together with R. Grigorchuk and T. Nagnibeda, proposed an alternative and more general definition of the weak subgroup induction property that makes sense for groups that are not self-similar, and that is sometimes easier to use. They proved~\cite[Proposition 4.3]{GLN2019} that for self-similar groups the two definitions are equivalent.
\begin{definition}[see~\cite{GLN2019}, Proposition 4.3]\label{Definition2}
A group $G\leq \Aut(T)$ has the \defi{weak subgroup induction property}
if for every finitely generated subgroup $H\leq G$, there exists a transversal $X$ of $T$ such that for each $v\in X$, the section $\varphi_v(\Stab_H(X))$ is either trivial or has finite index in $\varphi_v\bigl(\Stab_G(v)\bigr)$.
\end{definition}
Let $G\leq \Aut(T)$ and let $H$ be a subgroup of $G$. It follows from Definition~\ref{Definition2} that if $G$ has the weak subgroup induction property, so does $H$. Moreover, if $H$ has finite index in $G$ the converse is also true.
\subsection{Consequences of the weak subgroup induction property}\label{SubSec:wSIP}
We will now use Definition~\ref{Definition2} to prove several consequences of the weak subgroup induction property.
It was observed in~\cite[Corollary 4.5]{GLN2019} that Definition~\ref{Definition2} implies that a branch group with the weak subgroup induction property is either finitely generated or locally finite. We will in the following mostly focus our attention on the finitely generated case.

First of all, a finitely generated branch group with the weak subgroup induction property must be torsion and just infinite, hence proving the first part of Theorem~\ref{Thm:IntroWeak1}.
\begin{proposition}\label{Prop:JustInfinite}
Let $G\leq\Aut(T)$ be a finitely generated branch group with the weak subgroup induction property.
Then $G$ is torsion and just infinite.
\end{proposition}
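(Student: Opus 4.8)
The plan is to treat the two assertions separately, establishing torsion first and then deriving just-infiniteness from it together with finite generation. For torsion I would argue by contradiction: suppose $g\in G$ has infinite order and set $H=\gen{g}\cong\Z$. Applying Definition~\ref{Definition2} to the finitely generated subgroup $H$ yields a transversal $X$ of $T$. Since the $H$-orbit of any $v\in X$ stays inside its level and is thus finite, each $\Stab_H(v)$ has finite index in $H$, and hence so does the finite intersection $\Stab_H(X)=\bigcap_{v\in X}\Stab_H(v)$; being a finite-index subgroup of $\gen{g}\cong\Z$, it is infinite cyclic, say $\Stab_H(X)=\gen{g^m}$ with $g^m$ of infinite order. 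I would then note that the sections of $g^m$ at the vertices of $X$ cannot all be trivial: if $\varphi_v(g^m)=1$ for every $v\in X$, then $g^m$ acts trivially on each subtree $T_v$ and, since it fixes each $v\in X$ pointwise along the path to the root, it fixes every ancestor of $X$ as well; as $X$ is a transversal, $g^m$ would then fix every vertex of $T$ and equal $1$, a contradiction. Hence there is $v_0\in X$ with $\varphi_{v_0}(g^m)\neq 1$, so $\varphi_{v_0}(\Stab_H(X))$ is nontrivial.

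By Definition~\ref{Definition2}, this nontrivial subgroup $\varphi_{v_0}(\Stab_H(X))$ must then have finite index in $\varphi_{v_0}(\Stab_G(v_0))$. Since $\Rist_G(v_0)\leq\Stab_G(v_0)$ and $\varphi_{v_0}$ restricts to an isomorphism on $\Rist_G(v_0)$, the group $\varphi_{v_0}(\Stab_G(v_0))$ contains the infinite group $\varphi_{v_0}(\Rist_G(v_0))\cong\Rist_G(v_0)$ and is therefore infinite, so its finite-index cyclic subgroup $\varphi_{v_0}(\Stab_H(X))=\gen{\varphi_{v_0}(g^m)}$ is in fact infinite cyclic. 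Consequently $\varphi_{v_0}(\Stab_G(v_0))$ is virtually infinite cyclic, and so is each of its subgroups, in particular $\Rist_G(v_0)$. This is the crux, and I expect it to be the main point: I would contradict it using that $\Rist_G(v_0)$ is far too large to be virtually cyclic. Indeed, two children $v_0 1,v_0 2$ of $v_0$ yield commuting subgroups $\Rist_G(v_0 1)$ and $\Rist_G(v_0 2)$ of $\Rist_G(v_0)$ that intersect trivially and are each infinite (as $G$ is branch), so $\Rist_G(v_0)$ contains a direct product of two infinite groups. No such group is virtually cyclic, since in a finite-index copy of $\Z$ any two infinite subgroups meet nontrivially, whereas the two factors intersect trivially. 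This contradiction shows $g$ cannot have infinite order, so $G$ is torsion.

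For just-infiniteness I would combine torsion with finite generation and the classical description of normal subgroups of branch groups. Since $G$ is branch, $\Rist_G(\level{n})$ has finite index in $G$ for every $n$, and a finite-index subgroup of a finitely generated group is finitely generated; being a subgroup of the torsion group $G$, it is also torsion. Hence its abelianization $\Rist_G(\level{n})/\Rist_G(\level{n})'$ is a finitely generated torsion abelian group, thus finite, so $\Rist_G(\level{n})'$ has finite index in $\Rist_G(\level{n})$ and therefore in $G$. I would then invoke Grigorchuk's theorem that in a branch group every nontrivial normal subgroup $N$ contains $\Rist_G(\level{n})'$ for some $n$; together with the previous step this forces every such $N$ to have finite index, which is precisely just-infiniteness. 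The only delicate point here is citing the branch normal-subgroup theorem in the right form; the substantive input is the observation that finite generation passes to the finite-index subgroups $\Rist_G(\level{n})$, which is exactly what converts torsion into finiteness of the relevant abelianizations.
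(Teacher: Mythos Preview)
Your proof is correct and follows essentially the same route as the paper: the torsion argument is the paper's Lemma~\ref{Lemma:Torsion} (contradiction via Definition~\ref{Definition2} forcing some $\Rist_G(v)$ to be virtually~$\Z$), and for just-infiniteness the paper simply cites \cite[Theorem~4]{MR1765119}, whose proof is precisely the argument you spell out via $\Rist_G(\level n)'$. The only cosmetic differences are that you use two children of $v_0$ where the paper uses all $d$, and that you justify explicitly why some section of $g^m$ must be nontrivial.
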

\begin{proof}
By~\cite[Theorem 4]{MR1765119}, a finitely generated torsion branch group  is always just infinite.
Hence, the proposition directly follows from the next lemma.
\end{proof}
\begin{lemma}\label{Lemma:Torsion}
Let $G\leq\Aut(T)$ be a group with the weak subgroup induction property and such that $\Rist_G(v)$ is infinite for every $v\in T$\footnote{That is: the group $G$ is \defi{micro-supported}.}. 
Then, $G$ is torsion.
\end{lemma}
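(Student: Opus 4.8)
The plan is to prove that every element $g\in G$ has finite order. Fix $g\in G$ and set $H=\gen{g}$. Since $H$ is cyclic, hence finitely generated, I apply the subgroup induction property in the form of Definition~\ref{Definition2} to obtain a transversal $X$ such that for every $v\in X$ the section $\varphi_v(\Stab_H(X))$ is either trivial or of finite index in $\varphi_v(\Stab_G(v))$. As $\partial T$ is compact, $X$ is finite; letting $n$ be the largest level occurring in $X$, every element of $\Stab_H(\level n)$ fixes each vertex of $X$ (a fixed vertex has all its ancestors fixed), so $\Stab_H(\level n)\leq \Stab_H(X)\leq H$. Since $H$ acts on the finite set $\level n$, the kernel $\Stab_H(\level n)$ of this action has finite index in $H$, and therefore so does $K:=\Stab_H(X)$.

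Now suppose, for contradiction, that $g$ has infinite order, so that $H\cong\Z$ and $K$, being a finite-index subgroup, is infinite cyclic. Because $K$ fixes $X$ pointwise and $X$ is a transversal, the action of any $k\in K$ on $T$ is completely determined by its family of sections $(\varphi_v(k))_{v\in X}$; hence $k\mapsto(\varphi_v(k))_{v\in X}$ is injective. As $K$ is infinite, at least one section $\varphi_v(K)=\varphi_v(\Stab_H(X))$ must be infinite. For such a $v$ the section is nontrivial, so by the dichotomy it has finite index in $\varphi_v(\Stab_G(v))$. Since $\varphi_v(K)$ is a homomorphic image of $\Z$ that is infinite, it is infinite cyclic, and we conclude that $\varphi_v(\Stab_G(v))$ is \emph{virtually cyclic}.

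To reach a contradiction I would invoke the standing hypothesis on rigid stabilizers. The vertex $v$ has $d\geq 2$ children, so pick two distinct ones $u_1,u_2$. Every element of $\Rist_G(u_i)$ fixes $v$ and acts trivially outside $T_{u_i}\subseteq T_v$, so $\Rist_G(u_1),\Rist_G(u_2)\leq\Stab_G(v)$ and both lie in the rigid stabilizer of $v$ in $\Aut(T)$, on which $\varphi_v$ is injective. Hence $\varphi_v(\Rist_G(u_1))$ and $\varphi_v(\Rist_G(u_2))$ are two infinite subgroups of $\varphi_v(\Stab_G(v))$ (they are isomorphic to $\Rist_G(u_1)$ and $\Rist_G(u_2)$, which are infinite by hypothesis), and their intersection is trivial because $T_{u_1}$ and $T_{u_2}$ are disjoint, forcing $\Rist_G(u_1)\cap\Rist_G(u_2)=\{1\}$. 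But every infinite subgroup of a virtually cyclic group has finite index in it, so any two infinite subgroups intersect in a finite-index, hence infinite, subgroup; two infinite subgroups meeting trivially therefore cannot exist inside $\varphi_v(\Stab_G(v))$. This contradiction shows that $g$ has finite order, and as $g$ was arbitrary, $G$ is torsion.

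The computations of $[H:K]<\infty$ and of the injectivity of the section map on $K$ are routine. I expect the crux to be the last step: the ``finite index in a cyclic image'' alternative of Definition~\ref{Definition2} forces $\varphi_v(\Stab_G(v))$ to be virtually cyclic, and the whole argument hinges on this being incompatible with the two \emph{independent} infinite rigid stabilizers furnished by the hypothesis. The two points needing care are ensuring that the chosen vertex $v$ genuinely carries an infinite section (guaranteed by injectivity of the section map together with $K$ being infinite) and that virtual cyclicity really excludes two trivially intersecting infinite subgroups.
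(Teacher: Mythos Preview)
Your proof is correct and follows essentially the same line as the paper's: both pass to a cyclic $H=\gen{g}$, apply Definition~\ref{Definition2}, and deduce that some $\varphi_v(\Stab_G(v))$ is virtually~$\Z$, then derive a contradiction from the infinite rigid stabilizers at the children of~$v$. The only cosmetic difference is the endgame: the paper passes to a finite-index copy $Z\cong\Z$ inside $\Rist_G(v)$ and observes that it would contain $\Z^d$, whereas you argue directly that a virtually cyclic group cannot contain two infinite subgroups with trivial intersection---this is arguably cleaner, and your added justification that $\Stab_H(X)$ has finite index and that the section map on it is injective fills in details the paper leaves implicit.
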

\begin{proof}
Let $g$ be any element of $G$ and let $H\coloneqq\gen{g}$.
Let $X$ be a transversal of $T$ such that for every $v\in X$, the section $\varphi_{v}\bigl(\Stab_H(X)\bigr)$ is either trivial or has finite index in $\varphi_{v}\bigl(\Stab_G(v)\bigr)$.
If $g$ is of infinite order, then at least one of these sections is infinite, which implies that for some $v\in X$, the subgroup $\varphi_{v}\bigl(\Stab_H(X)\bigr)$ is isomorphic to $\Z$ and hence $\varphi_{v}\bigl(\Stab_G(v)\bigr)$ is virtually~$\Z$.
But  then, the subgroup $\Rist_G(v)\cong\varphi_{v}\bigl(\Rist_G(v)\bigr)$ is also virtually $\Z$.
In particular, there exists a finite-index subgroup $Z$ of $\Rist_G(v)$ which is isomorphic to $\Z$.
Since $Z$ has finite index in $\Rist_G(v)$, for every $w\geq v$ the subgroup $\Rist_Z(w)\leq Z$ is non-trivial and hence also isomorphic to $\Z$.
But then, by looking at all children $w_1,\dots, w_d$ of $v$ we have that $Z\cong \Z$ contains a subgroup isomorphic to $Z^d\cong\Z^d$ which is absurd.
\end{proof}
Recall that the \defi{Frattini subgroup} $\Phi(G)$ of a group $G$ is the intersection of all the maximal subgroups of $G$.
In the following, we will make use of the bigger subgroup $\Phi_F(G)$ consisting of the intersection of maximal subgroups of finite index of $G$.
This is obviously a characteristic subgroup of $G$.
Observe that $\Phi_F(G)$ has finite index in $G$ as soon as $G$ is a finitely generated $p$-group.
More generally, we have
\begin{lemma}\label{Lemma:DerivedSubgroup}
Let $G$ be a group such that every finite quotient of $G$ is nilpotent.
Then $\Phi_F(G)$ contains the derived subgroup $G'$.
In particular, $\Phi_F(G)$ has finite index in $G$ if $G'$ does.
\end{lemma}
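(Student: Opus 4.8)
The plan is to show directly that $G'$ is contained in every maximal subgroup of finite index; taking the intersection over all such subgroups then gives $G'\leq\Phi_F(G)$ by the very definition of $\Phi_F(G)$. So I would fix an arbitrary maximal subgroup $M\leq G$ of finite index, and reduce the whole statement to proving the single containment $G'\leq M$.

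To exploit the hypothesis, which concerns finite \emph{quotients}, I would pass to one. Since $M$ has finite index, its normal core $N\coloneqq\bigcap_{x\in G}xMx^{-1}$ is a normal subgroup of finite index, so $\bar G\coloneqq G/N$ is finite and hence, by hypothesis, nilpotent. Writing $\bar M$ for the image of $M$ in $\bar G$, maximality of $M$ together with $N\leq M$ ensures that $\bar M$ is a maximal subgroup of $\bar G$. The key ingredient is then the classical fact that in a finite nilpotent group every maximal subgroup is normal and of prime index (this follows from the normalizer condition: a proper subgroup of a nilpotent group is strictly contained in its normalizer, so a maximal subgroup must equal its own normalizer and is therefore normal). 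Consequently $\bar G/\bar M$ has prime order, hence is abelian, so $\bar G'\leq\bar M$. Pulling this containment back along the quotient map $G\to\bar G$ and using $N\leq M$ yields $G'\leq M$. As $M$ was an arbitrary maximal subgroup of finite index, this gives $G'\leq\Phi_F(G)$.

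The ``in particular'' clause is then immediate: from $G'\leq\Phi_F(G)$ one gets $[G:\Phi_F(G)]\leq[G:G']$, so $\Phi_F(G)$ has finite index as soon as $G'$ does.

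The argument is essentially routine once the reduction to a finite nilpotent quotient is carried out; the only real content is the well-known normality (and prime index) of maximal subgroups in nilpotent groups, which is exactly what converts the bare maximality hypothesis into the abelianity of $\bar G/\bar M$ needed to capture $G'$. I do not expect a genuine obstacle, the one point requiring care being to work with the core $N$ rather than with $M$ directly, so that the hypothesis on \emph{finite} quotients of $G$ actually applies.
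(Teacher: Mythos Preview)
Your argument is essentially identical to the paper's: pass to the normal core $N$ of a finite-index maximal subgroup $M$, use that $G/N$ is finite nilpotent so $M/N$ is normal of prime index, and conclude $G'\leq M$. One small slip in your parenthetical: from the normalizer condition $M\lneq N_{\bar G}(\bar M)$ and maximality you should conclude $N_{\bar G}(\bar M)=\bar G$ (hence $\bar M\trianglelefteq\bar G$), not that ``a maximal subgroup must equal its own normalizer''; the conclusion you draw is correct, only the phrasing is inverted.
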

\begin{proof}
Let $M$ be a maximal subgroup of finite index and $N$ be its normal core.
Since $M$ is of finite index, so is $N$, and $M/N$ is a maximal subgroup of the finite nilpotent group $G/N$.
Since a finite group is nilpotent if and only if every maximal subgroup is normal, we conclude that $M$ is normal and hence $G/M$ is cyclic of prime order.
In particular, $G/M$ is abelian, which implies that $M$ contains $G'$.
\end{proof}
Observe that the condition that $G'$ has finite index in $G$ is satisfied for a broad class of groups.
This is the case for example for finitely generated torsion groups, or for non-abelian just infinite groups.
In particular, if $G$ is a finitely generated weakly branch (or more generally micro-supported) group with the weak subgroup induction property, then it is torsion and therefore $G'$ has finite index in $G$.
This implies that for such a $G$, the subgroup $\Phi_F(G)$ has finite index in $G$ as soon as every finite quotient of $G$ is nilpotent.
\begin{lemma}\label{Lemma:FiniteRankDense}
Let $G$ be a finitely generated group such that $\Phi_F(G)$ has finite index in $G$.
If $H$ is a subgroup of $G$ that is dense for the profinite topology, then it contains a finitely generated subgroup that is also dense in $G$ for the profinite topology.
\end{lemma}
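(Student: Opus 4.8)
The plan is to run a Frattini-type argument, exploiting both features of $\Phi_F(G)$ simultaneously: that it is characteristic (hence normal) of finite index, so that $G/\Phi_F(G)$ is a finite group, and that it is contained in \emph{every} maximal subgroup of finite index, which is precisely the property that makes its elements ``non-generators'' for the profinite topology.

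First I would record the translation of density into quotient language: a subgroup is dense for the profinite topology if and only if it surjects onto every finite quotient of $G$, equivalently $HN=G$ for every normal subgroup $N$ of finite index. Applying this to the normal finite-index subgroup $N=\Phi_F(G)$ yields $H\Phi_F(G)=G$, i.e.\ the image of $H$ in the finite group $\bar G:=G/\Phi_F(G)$ is all of $\bar G$. Since $\bar G$ is finite, I can pick finitely many elements $g_1,\dots,g_n\in H$ whose images generate $\bar G$ and set $K:=\gen{g_1,\dots,g_n}$. Then $K$ is a finitely generated subgroup of $H$ satisfying $K\Phi_F(G)=G$, and the whole lemma reduces to showing that such a $K$ is already dense.

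For this last step I would argue by contradiction. If $K$ were not dense, then it would fail to surject onto some finite quotient $G/N$ (with $N$ normal of finite index), so that $KN/N$ is a proper subgroup of the finite group $G/N$ and is therefore contained in some maximal subgroup of $G/N$; pulling back, $K$ lies in a maximal subgroup $M$ of finite index in $G$. But $\Phi_F(G)\leq M$ by the very definition of $\Phi_F(G)$, whence $G=K\Phi_F(G)\leq M$, contradicting the properness of $M$. Hence $K$ is dense, as required.

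The genuinely load-bearing point, and the step I would treat with the most care, is this last dichotomy: that a finitely generated subgroup which fails to be dense is necessarily contained in a maximal subgroup of finite index. This is where finiteness is essential, since maximal subgroups lie above proper subgroups inside a finite quotient, and it is then the non-generator property of $\Phi_F(G)$ in the profinite topology that forces the contradiction; the remainder is bookkeeping, with the finiteness of $G/\Phi_F(G)$ doing the work of producing finitely many generators.
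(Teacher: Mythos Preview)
Your argument is correct and follows essentially the same route as the paper: use density of $H$ against the finite-index normal subgroup $\Phi_F(G)$ to get $H\Phi_F(G)=G$, extract finitely many $h_i\in H$ with $\gen{h_1,\dots,h_n}\Phi_F(G)=G$, and then derive a contradiction by trapping a non-dense $K$ inside a finite-index maximal subgroup, which necessarily contains $\Phi_F(G)$. One small expository remark: in your final paragraph you stress finite generation of $K$ as load-bearing for the ``contained in a maximal subgroup of finite index'' step, but in fact that step only uses finiteness of $G/N$; the finite generation of $K$ is needed solely for the statement of the lemma, not for the contradiction.
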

\begin{proof}
Since $H$ is dense for the profinite topology and $\Phi_F(G)$ is of finite index, we have $H\cdot\Phi_F(G)=G$.
In particular, we have elements $h_1$ to $h_n$ of $H$ such that $\gen{h_1,\dots,h_n}\Phi_F(G)=G$.
Now, let $N$ be a normal subgroup of finite index of $G$.
We claim that $\gen{h_1,\dots,h_n}N=G$.
Indeed, if it was not the case, there would exist some maximal subgroup $M$ of $G$ containing $\gen{h_1,\dots,h_n}N$.
This subgroup is necessarily of finite index and hence contains $\Phi_F(G)$.
In particular, we would have $M\geq\gen{h_1,\dots,h_n}\Phi_F(G)=G$, which is absurd.
We hence have
\[
	\overline{\gen{h_1,\dots,h_n}}=\bigcap_{N\underset{\textnormal{f.i.}}{\trianglelefteq} G}\gen{h_1,\dots,h_n}N=\bigcap_{N\underset{\textnormal{f.i.}}{\trianglelefteq} G}G=G
\]
as desired.
\end{proof}
Using the above lemma, we prove that when $\Phi_F(G)$ is non-trivial, the subgroup induction property for a branch group implies that all maximal subgroups are of finite index.%
\begin{proposition}\label{Prop:MaxFinite}
Let $G\leq\Aut(T)$ be a finitely generated branch group with the weak subgroup induction property.
If $\Phi_F(G)$ is non-trivial, then every maximal subgroup of $G$ has finite index in $G$.
\end{proposition}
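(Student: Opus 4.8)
The plan is to argue by contradiction: assume some maximal subgroup $M$ has infinite index and produce a finite index subgroup of $M$, which is absurd. First I would record two preliminary facts. Since $G$ is a finitely generated branch group with the subgroup induction property, Proposition~\ref{Prop:JustInfinite} gives that $G$ is just infinite. As $\Phi_F(G)$ is characteristic, it is normal, and being nontrivial by hypothesis it must therefore have finite index in $G$; this is exactly the hypothesis needed to invoke Lemma~\ref{Lemma:FiniteRankDense}. Next, if $M$ is maximal of infinite index then for every finite index normal subgroup $N\trianglelefteq G$ we have $MN=G$: indeed $M\leq MN\leq G$ and maximality leaves only $MN=M$ or $MN=G$, the former forcing $N\leq M$ and hence $[G:M]<\infty$, a contradiction. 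Thus $M$ is dense for the profinite topology, and Lemma~\ref{Lemma:FiniteRankDense} furnishes a finitely generated dense subgroup $H\leq M$. It now suffices to prove that such an $H$ has finite index in $G$, since then $M\supseteq H$ also has finite index, contradicting $[G:M]=\infty$.

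To show $[G:H]<\infty$ I would feed $H$ into the subgroup induction property in the form of Definition~\ref{Definition2}, obtaining a transversal $X$ such that for each $v\in X$ the section $\varphi_v(\Stab_H(X))$ is either trivial or of finite index in $G_v:=\varphi_v(\Stab_G(v))$. The first thing to establish is that density rules out the trivial alternative. Fix $v\in X$ and let $n$ be the largest level occurring in $X$; then $\Stab_H(\level n)\leq\Stab_H(X)\leq\Stab_H(v)$ and $\Stab_H(\level n)$ has finite index in $H$, so $\Stab_H(X)$ has finite index in $\Stab_H(v)$. Since $\varphi_v$ is a homomorphism on $\Stab_H(v)$, a trivial section $\varphi_v(\Stab_H(X))$ would force $\varphi_v(\Stab_H(v))$ to be finite. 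On the other hand, density of $H$ means it surjects onto every $G/\Stab_G(\level m)$; lifting elements of $\Stab_G(v)$ and comparing sections up to level $m$ shows that $\varphi_v(\Stab_H(v))$ surjects onto $G_v/\Stab_{G_v}(\level{m'})$ for every $m'$. Because $G$ is branch, $G_v$ contains the isomorphic copy $\varphi_v(\Rist_G(v))$ of the infinite group $\Rist_G(v)$ and acts faithfully on $T_v$, so these congruence quotients grow without bound and $\varphi_v(\Stab_H(v))$ cannot be finite. Hence no section is trivial, and every $\varphi_v(\Stab_H(X))$ has finite index in $G_v$.

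It remains to upgrade this to the statement that $\Stab_H(X)$ has finite index in $\Stab_G(X)$, which then yields $[G:H]<\infty$ because $\Stab_G(X)$ has finite index in $G$ and $\Stab_H(X)\leq H$. This is the main obstacle. The embedding $\Psi_X\colon\Stab_G(X)\hookrightarrow\prod_{v\in X}G_v$, $g\mapsto(\varphi_v(g))_{v}$, realises $\Stab_H(X)$ as a subdirect product all of whose projections are now finite index; but finite index of the projections does not by itself give finite index in the product, since a ``skew diagonal'' could have infinite index, so genuine structural input is needed. Here I would exploit that, $\Stab_G(X)$ being of finite index in the just infinite group $G$, it is itself just infinite, and that it contains the full direct product $\Rist_G(X)=\prod_{v\in X}\Rist_G(v)$ with finite index, while $\Stab_H(X)$ is dense in $\Stab_G(X)$ (the trace of a dense subgroup on a finite index, hence open, subgroup is dense). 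A just infinite residually finite group has no proper dense normal subgroup, which is precisely what forbids the skew-diagonal phenomenon; combining this with the density of $\Stab_H(X)$ and the finite index of all projections forces $\Stab_H(X)$ to have finite index in $\Stab_G(X)$, completing the proof. Equivalently, one may package the whole argument as the verification that the class $\mathcal{X}=\{K\leq G:[\overline K:K]<\infty\}$ is inductive in the sense of Definition~\ref{Definition:SubgroupInduction}, conditions (A) and (B) being immediate and this same subdirect-product claim being exactly condition (C), after which $H\in\mathcal{X}$ together with $\overline H=G$ gives $[G:H]<\infty$ at once.
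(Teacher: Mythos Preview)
Your setup matches the paper's exactly: $G$ is just infinite, so $\Phi_F(G)$ has finite index; a maximal $M$ of infinite index is profinitely dense; Lemma~\ref{Lemma:FiniteRankDense} yields a finitely generated dense $H\leq M$; and Definition~\ref{Definition2} applied to $H$ gives a transversal $X$ with each section trivial or of finite index. The divergence, and the gap, is in how you extract the contradiction. You aim to prove that $H$ itself has finite index by ``upgrading'' from the sections, and you identify this as the main obstacle---but you do not actually overcome it. The sentence ``a just infinite residually finite group has no proper dense \emph{normal} subgroup, which is precisely what forbids the skew-diagonal phenomenon'' does not apply: $\Stab_H(X)$ is not normal in $\Stab_G(X)$, and nothing you have written rules out a dense non-normal subgroup all of whose projections happen to have finite index. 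Your alternative packaging via the class $\mathcal X=\{K:[\overline K:K]<\infty\}$ just relocates the same unproved claim to condition~(C).

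The paper avoids this difficulty entirely by obtaining the contradiction \emph{at the level of the sections}, not back in $G$. It invokes Theorem~3.2 and Lemma~3.1 of \cite{Francoeur2020}: since $H$ is proper and dense in the just infinite branch group $G$, every section $\varphi_v(\Stab_H(v))$ is itself \emph{proper and dense} in $\varphi_v(\Stab_G(v))$. But $\varphi_v(\Stab_G(v))$ is residually finite, so finite subgroups and finite-index subgroups are closed; a proper dense subgroup can therefore be neither. This contradicts the dichotomy supplied by the subgroup induction property and finishes the proof. Note that your ``step~5'' essentially sketches the \emph{density} half of this external input (that $\varphi_v(\Stab_H(v))$ surjects onto congruence quotients of $G_v$), but you never establish the \emph{properness} half---and it is precisely properness that makes ``closed and dense'' contradictory. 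Supplying properness would give you the paper's contradiction immediately, with no need for the problematic upgrade.
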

\begin{proof}
Suppose that $\Phi_F(G)$ is non-trivial.
Then, it must be of finite index in $G$, since $G$ is just infinite by Proposition \ref{Prop:JustInfinite}.

Suppose now for a contradiction that there exists a maximal subgroup $M$ that has infinite index in $G$.
Such a subgroup is dense for the profinite topology.
By Lemma~\ref{Lemma:FiniteRankDense}, $M$ contains a finitely generated proper dense subgroup $H<G$.

On one hand, since $G$ has the weak subgroup induction property, there must exist a transversal $X$ of $T$ such that for every $v\in X$, the section $\varphi_v(\Stab_H(X))$ is either trivial or has finite index in $\varphi_v(\Stab_G(v))$.
On the other hand, since $G$ is just infinite, being dense in the profinite topology on $G$ is equivalent to being pronormal (see Definition 2.21 of~\cite{Francoeur2020}).
Therefore, it follows from Lemma 3.1 and Theorem 3.2 of~\cite{Francoeur2020} that for every $v\in X$, the subgroup $\varphi_v(\Stab_H(v))$ is a proper and dense subgroup of $\varphi_v(\Stab_G(v))$ for the profinite topology on $\varphi_v(\Stab_G(v))$, since $H$ is proper and dense for the profinite topology on the branch group $G$.
This immediately leads to a contradiction.
Indeed, since $\varphi_v(\Stab_G(v))$ is a residually finite group, both finite subgroups and finite-index subgroups are closed for the profinite topology, and hence cannot be both dense and proper.
\end{proof}
This finishes the proof of Theorem~\ref{Thm:IntroWeak1}.

We now turn our attention to Theorem~\ref{Thm:IntroWeak2}. We start by introducing some notation and some preliminaries results.
\begin{notation}
Let $G$ be a group and $k\in \N$ be a natural number.
We will denote by $G^{(\times k)}$ the direct product of $k$ copies of $G$.
\end{notation}
\begin{lemma}\label{lemma:AlmostNormalSubgroupsCommensurableWithPower}
Let $G\leq\Aut(T_d)$ be a just infinite self-replicating branch group and let $H\leq G$ be an almost normal subgroup of $G$, i.e. a subgroup which is normal in a finite-index subgroup of $G$.
Then, $H$ is commensurable with one of $\{1\}$, $G$, $G^{(\times 2)}$, \dots, $G^{(\times d-1)}$.
\end{lemma}
\begin{proof}
If $H=1$, then the result is obvious.
If $H$ is non-trivial, then it follows from~\cite[Theorem 1.2]{GarridoWilson} that there exist $m,n\in \N$ with $m\geq n$ and some non-empty subset $V\subseteq \level{n}$ such that
\[\Rist_G(\level m)' \cap \Bigl(\prod_{v\in V}\Rist_G(v)\Bigr) \leq H \leq \prod_{v\in V}\Rist_G(v).\]
Since $G$ is just infinite, $\Rist_G(\level m)'$ is of finite index in $G$, from which we conclude that $H$ is commensurable with $\prod_{v\in V}\Rist_G(v)$.
It follows from the fact that $G$ is a self-replicating branch group that $\Rist_G(v)$ is commensurable with $G$ for every $v\in T_d$.
In particular, $H$ is commensurable with $G^{(\times |V|)}$, and $G$ is commensurable with $G^{(\times d)}$.
Thus, by choosing $j\in \{0,1,\dots, d-1\}$ such that $j\equiv |V| \mod d$, we have $H$ commensurable with $G^{(\times j)}$.
\end{proof}
\begin{lemma}\label{lemma:CommensurabilityOfSubdirectProduct}
Let $G\leq\Aut(T_d)$ be a just infinite self-replicating branch group and let $H_1,\dots, H_k$ be groups such that for every $i\in \{1,\dots, k\}$, $H_i$ is commensurable with one of $\{1\}$, $G$, $G^{(\times 2)}$, \dots, $G^{(\times d-1)}$.
Let $H\subd H_1\times \dots \times H_k$ be a subdirect product.
Then, $H$ is commensurable with one of $\{1\}$, $G$, $G^{(\times 2)}$, \dots, $G^{(\times d-1)}$.
\end{lemma}
\begin{proof}
In the case where $k=1$, there is nothing to prove.
Let us now suppose that $k=2$, so that $H\subd H_1\times H_2$.
For $i\in\{1,2\}$, let us denote by $\pi_i\colon H\rightarrow H_1$ the natural projection map on the coordinate $i$, and let $R_2\leq H_2$ be such that $1\times R_2=\ker \pi_1$.
Notice that $1\times R_2$ is a normal subgroup of $H$ and thus $R_2$ is a normal subgroup of $H_2$.

By our assumptions, there exists $j_2\in \{0,1,\dots, d-1\}$ such that $H_2$ is commensurable with $G^{(\times j_2)}$.
Using the fact that $G$ is a self-replicating branch group, and thus that $\Rist_G(v)$ is commensurable with $G$ for all $v\in T_d$, we can find a finite-index subgroup $L_2\leq H_2$, a (possibly empty) subset of vertices $V\subseteq \level{1}$ of the tree $T_d$ of cardinality $j_2$ and an injective group homomorphism $\iota\colon L_2 \rightarrow G$ such that $\iota(L_2)$ is a normal subgroup of finite index of $\prod_{v\in V}\Rist_G(v)$, which is commensurable with $G^{(\times j_2)}$.

Let us consider $\iota(L_2\cap R_2)$.
Since $R_2$ is normal in $H_2$, $\iota(L_2\cap R_2)$ must be normal in $\iota(L_2)$.
It follows that $\iota(L_2\cap R_2)$ is also normal in the direct product
\[
	\iota(L_2)\times\prod_{v\in \level{1}\setminus V}\Rist_G(v).
\]
Notice that this latter subgroup must be of finite index in $\Rist_G(\level{1})$, and therefore in $G$, since $\iota(L_2)$ is of finite index in $\prod_{v\in V}\Rist_G(v)$.
Thus, $\iota(L_2\cap R_2)$ is an almost normal subgroup of $G$ (recall that an \defi{almost normal} subgroup of $G$ is a subgroup that is normal in a subgroup of finite index of $G$).
In particular, by Lemma~\ref{lemma:AlmostNormalSubgroupsCommensurableWithPower}, there exists $j_{R}\in \{0,1,\dots, d-1\}$ such that $\iota(L_2\cap R_2)$ is commensurable with $G^{(\times j_{R})}$, which implies that $R_2$ is commensurable with $G^{(\times j_R)}$.
Furthermore, by Proposition 3.11 of~\cite{FGLN2023}, there exists an almost normal subgroup $K\leq G$ such that $K$ is a \defi{complement} of $\iota(L_2\cap R_2)$, meaning that $[K,\iota(L_2\cap R_2)]=1$, $K\cap \iota(L_2\cap R_2) = 1$ and $K\iota(L_2\cap R_2)$ is of finite index in $G$.

Let $K_2= \iota^{-1}(K\cap \iota(L_2))\leq L_2$.
Then, by the injectivity of the map $\iota$, we have $[K_2,L_2\cap R_2]=1$ and $K_2\cap (L_2\cap R_2)=1$.
Furthermore, since $K\iota(L_2\cap R_2)$ is of finite index in $G$, $(K\cap \iota(L_2))\iota(L_2\cap R_2)$ must be of finite index in $\iota(L_2)$, which implies that $K_2(L_2\cap R_2)$ is of finite index in $L_2$ and thus in $H_2$.

Let us define $P=\pi_2^{-1}(K_2)$.
Then, we must have the following properties
\begin{enumerate}
\item $[P,1\times (L_2\cap R_2)]=1$,\label{item:PAndRCommute}
\item $P\cap (1\times (L_2\cap R_2))=1$,\label{item:PAndRDisjoint}
\item $P(1\times (L_2\cap R_2))$ is of finite index in $H$,\label{item:PRFiniteIndex}
\end{enumerate}
where point~\ref{item:PRFiniteIndex} follows from the fact that $\pi_2(P(1\times (L_2\cap R_2))) = K_2(L_2\cap R_2)$ is of finite index in $L_2$, which is of finite index in $H_2$, and that the kernel of $\pi_2$ is contained in $P$.

It follows from points~\ref{item:PAndRCommute}, \ref{item:PAndRDisjoint} and~\ref{item:PRFiniteIndex} that $H$ is abstractly commensurable with $P\times R_2$.
Furthermore, we see from point~\ref{item:PAndRDisjoint} and the fact that $\ker(\pi_1)\cap \pi_2^{-1}(L_2) = 1\times (L_2\cap R_2)$ that $\pi_1$ is injective when restricted to $P$.
Therefore, $P\cong \pi_1(P)$, which is of finite index in $H_1$ by point~\ref{item:PRFiniteIndex}.
Putting all this together, we conclude that $H$ is abstractly commensurable with $H_1\times R_2$.
We have seen above that $R_2$ is commensurable with $G^{(\times j_R)}$, and by our assumptions on $H_1$, there exists $j_1\in \{0,1,\dots, d-1\}$ such that $H_1$ is commensurable with $G^{(\times j_1)}$.
Thus, $H$ is commensurable with $G^{(\times (j_1+j_R))}$.
Since $G$ is a self-replicating branch group, it must be abstractly commensurable with $G^{(\times d)}$, and therefore $G^{(\times (j_1+j_R))}$ is either trivial (when $j_1=j_R=0$) or commensurable with $G^{(\times j_0)}$, where $j_0$ is the unique element in $\{1,\dots ,d-1\}$ such that $j_0 \equiv j_1+j_R \mod d-1$.

For the cases where $k>2$, we proceed by induction.
Let us assume that the result holds for some $k$, and let us show it for $k+1$.
In this case, we have $H\leq H_1 \times H_2\times \dots \times H_{k+1}$ a subdirect product.
Let us denote by
\[
	\pi_{\perp}\colon H_1\times H_2\times \dots \times H_{k+1} \rightarrow H_2\times \dots \times H_{k+1}
\]
the homomorphism obtained by deleting the first coordinate, and let us write $H^{\perp} = \pi_{\perp}(H)$.
Notice that $H^{\perp}\leq H_2\times \dots \times H_{k+1}$ is a subdirect product.
Therefore, by our induction hypothesis, $H^{\perp}$ is commensurable with one of $\{1\}$, $G$, $G^{(\times 2)}$, \dots, $G^{(\times d-1)}$.
This means that $H\leq H_1\times H^{\perp}$ is a subdirect product of two groups satisfying the assumptions of the theorem.
As we have shown above, this implies that $H$ is commensurable with one of $\{1\}$, $G$, $G^{(\times 2)}$, \dots, $G^{(\times d-1)}$.
\end{proof}
\begin{theorem}\label{Thm:Intro2Generalized}
Let $G\leq\Aut(T_d)$ be a finitely generated self-replicating branch group with the weak subgroup induction property and let $H$ be a finitely generated subgroup of~$G$.
Then, $H$ is commensurable with one of $\{1\}$, $G$, $G^{(\times 2)}$, \dots, $G^{(\times d-1)}$.
\end{theorem}
\begin{proof}
Let $\X$ be the set of all subgroups of $G$ which are commensurable with one of $\{1\}$, $G$, $G^{(\times 2)}$, \dots, $G^{(\times d-1)}$.
We need to show that $\X$ is a strongly inductive class.

It is clear that both $\{1\}$ and $G$ belong to $\X$, so condition (\ref{Item:DefSubgroupInduction1}) is satisfied.
Condition (\ref{Item:StrongInductive}) is also obviously satisfied, since if $H,L\leq G$ are two subgroups with $H$ of finite index in $G$, then they are commensurable, and commensurability is an equivalence relation.

The fact that condition (\ref{Item:DefSubgroupInduction3}) is also verified follows directly from Lemma~\ref{lemma:CommensurabilityOfSubdirectProduct} and the fact that $G$ must be just-infinite (Proposition~\ref{Prop:JustInfinite}). 
\end{proof}
\subsection{Consequences of the subgroup induction property}\label{SubSec:SIP}
By assuming that our group has the subgroup induction property instead of the weak subgroup induction property, one can extend Proposition~\ref{Prop:MaxFinite} to all groups commensurable with it.
We prove this immediately after recalling the following result from~\cite{MR2009443}. 
\begin{lemma}[{\cite[Lemma 1]{MR2009443}}]\label{Lemma:MaxInSubgroup}
Let $G$ be an infinite finitely generated group and let $H$ be a subgroup of finite index in $G$.
If $G$ has a maximal subgroup $M$ of infinite index, then $H$ has a maximal subgroup of infinite index containing $H \cap M$.
\end{lemma}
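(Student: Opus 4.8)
The plan is to recast everything in terms of the primitive action of $G$ on the coset space $G/M$ and then to descend to $H$ through its normal core in $G$. First I would record the two elementary facts that make the statement believable. Since $M$ is maximal of infinite index, $G$ acts transitively and primitively on the infinite set $G/M$, with $\Stab_G(M)=M$; and since $[M:H\cap M]\leq[G:H]<\infty$ while $[G:M]=\infty$, multiplicativity of indices gives $[H:H\cap M]=\infty$. Thus $U:=H\cap M$ is already a subgroup of infinite index in $H$, and the whole problem is to enlarge $U$ to a \emph{maximal} subgroup of $H$ without losing the infinite index.

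The main device I would introduce is the normal core $K:=\bigcap_{g\in G}gHg^{-1}$, which is normal of finite index in $G$ and contained in $H$; note $K$ is finitely generated, being of finite index in the finitely generated group $G$. Because $[G:K]<\infty$, the subgroup $K$ cannot lie in $M$ (otherwise $M$ would have finite index), so maximality of $M$ forces $MK=G$. Writing $D:=M\cap K$, this gives $[K:D]=[MK:M]=[G:M]=\infty$ and, by the modular law (using $K\leq H$), $H=(H\cap M)K=UK$, with $U$ normalizing $K$. The rigidity statement I would then prove is: \emph{any subgroup $J$ with $D\leq J\leq K$ that is normalized by $M$ equals $D$ or $K$.} Indeed $MJ$ is then a subgroup containing $M$, so either $MJ=M$, forcing $J\leq M\cap K=D$, or $MJ=G$, in which case for $k\in K$ one writes $k=mj$ and finds $m=kj^{-1}\in M\cap K=D\leq J$, whence $k\in J$ and $J=K$.

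With this in hand, the construction is as follows. Maximal subgroups $N$ of $H$ containing $U$ correspond, via $N\mapsto N\cap K$ and $J\mapsto UJ$, to maximal proper $U$-invariant subgroups $J$ of $K$ containing $D$; this is the modular law together with $H=UK$ and the fact that $U$ normalizes $K$, and under the correspondence one checks $UK/UJ\cong K/J$ as $K$-sets, so $[H:N]=[K:J]$. It therefore suffices to produce a maximal proper $U$-invariant subgroup $J\geq D$ of \emph{infinite} index in $K$. Such maximal $J$ exist by Zorn's lemma applied to $U$-invariant subgroups in the interval from $D$ to $K$ (note $D$ itself is $U$-invariant since $U\leq M$), the finite generation of $K$ guaranteeing that an ascending union of proper $U$-invariant subgroups remains proper.

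The crux, and the step I expect to be the main obstacle, is ruling out that every such maximal $J$ has finite index. Here I would argue by contradiction: take a maximal proper $U$-invariant $J\geq D$; since $U\leq M$ stabilizes $J$ and $[M:U]\leq[G:H]<\infty$, the group $M$ has only finitely many conjugates $m_1Jm_1^{-1},\dots,m_rJm_r^{-1}$, and their intersection $J^{*}$ is $M$-invariant, contains $D$, and—being a finite intersection of finite-index subgroups—has finite index in $K$. The rigidity statement then forces $J^{*}=D$, so $[K:D]<\infty$, contradicting $[K:D]=\infty$. Hence some maximal proper $U$-invariant $J\geq D$ has infinite index, and $N:=UJ$ is the desired subgroup: it is maximal in $H$, it contains $U=H\cap M$, and $[H:N]=[K:J]=\infty$. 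I expect the two delicate points to be the bookkeeping of the correspondence between subgroups of $H$ above $U$ and $U$-invariant subgroups of $K$ above $D$, and verifying that the infinite index genuinely survives the passage to the $M$-invariant intersection $J^{*}$.
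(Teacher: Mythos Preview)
The paper does not give its own proof of this lemma: it is quoted directly from Grigorchuk--Wilson with only the citation \cite[Lemma~1]{MR2009443}, so there is no in-paper argument to compare against.

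Your argument is correct. The key ingredients---passing to the normal core $K$ of $H$, deducing from maximality of $M$ the rigidity statement that the only $M$-invariant subgroups between $D=M\cap K$ and $K$ are $D$ and $K$ themselves, the modular-law bijection between subgroups of $H$ above $U=H\cap M$ and $U$-invariant subgroups of $K$ above $D$ (with matching indices), and finally the finite-index trick using $[M:U]<\infty$ to force any finite-index $U$-invariant $J$ to have $M$-invariant core equal to $D$---all go through as you describe. The two points you flag as delicate (the correspondence and the survival of infinite index) are both fine: for the first, the inverse maps $N\mapsto N\cap K$ and $J\mapsto UJ$ really are mutually inverse because $U\cap K=(H\cap M)\cap K=M\cap K=D\leq J$, and $[H:N]=[K:J]$ follows from $H=KU$ with $U\leq N$; for the second, each $M$-conjugate $mJm^{-1}$ still lies between $D$ and $K$ since $D$ is normal in $M$ and $K$ is normal in $G$, so rigidity applies to $J^{*}$.

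One minor remark: your contradiction argument actually shows that \emph{every} maximal proper $U$-invariant $J\geq D$ has infinite index in $K$, not merely that some does, since the argument applies to an arbitrary such $J$ assumed to have finite index. This is harmless---you only need one---but the phrasing ``Hence some maximal proper $U$-invariant $J$\dots'' slightly undersells what you proved.
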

\begin{proposition}\label{prop:CommensurableMaxSubgroups}
Let $G\leq\Aut(T_d)$ be a finitely generated self-similar branch group with the subgroup induction property and such that $\Phi_F(G)$ is non-trivial, and let $\Gamma$ be a group commensurable with $G$.
Then, all maximal subgroups of $\Gamma$ are of finite index in $\Gamma$.
\end{proposition}
\begin{proof}
By Lemma~\ref{Lemma:MaxInSubgroup}, it suffices to show that if $H\leq G$ is a subgroup of finite index in $G$, then all of its maximal subgroups are of finite index.
Let us suppose that there exists a subgroup $H\leq G$ of finite index containing a maximal subgroup $M<H$ of infinite index.
As $M$ is maximal in $H$, which is of finite index in $G$, we must have $\overline{M}=H$, where $\overline{M}$ denotes the closure of $M$ in the profinite topology on $G$.

By~\cite[Proposition 5.15]{FGLN2023}, since $H$ is of finite index in $G$, there exists some $n\in \N$ such that $\varphi_v(\Stab_{H}(v)) = G$ for all $v\in \level{n}$.
For every $v\in \level{n}$, we observe that $\overline{\Stab_M(v)} = \Stab_H(v)$.
Indeed, for all $m\geq n$, we have
\[(M\Rist_G(\level m)')\cap \Stab_G(v)= \Stab_M(v)\Rist_G(\level m)',\]
since $\Rist_G(\level m)'\leq \Stab_G(v)$. It follows that
\begin{align*}
\Stab_{H}(v) &= \overline{M}\cap \Stab_G(v) =  \Bigl(\bigcap_{m\geq n}M\Rist_G(\level m)'\Bigr)\cap\Stab_G(v)\\
&= \bigcap_{m\geq n} (M\Rist_G(\level m)')\cap \Stab_G(v)\\
&=\bigcap_{m\geq n} \Stab_M(v)\Rist_G(\level m)'\\
&=\overline{\Stab_M(v)}
\end{align*}
where we used the fact that $G$ is a just infinite branch group, so that the collection $\setst{\Rist_G(\level m)'}{m\geq n}$ forms a basis of neighbourhoods for the profinite topology on $G$.

Since the preimage by $\varphi_v$ of a finite-index subgroup is a finite-index subgroup, this map is continuous for the profinite topology.
Hence, for every $v\in \level{n}$ we have
\[
	G = \varphi_v(\Stab_H(v)) = \varphi_v(\overline{\Stab_M(v)}) \leq \overline{\varphi_v(\Stab_M(v))}\leq G,
\]
from which it follows that $\varphi_v(\Stab_M(v))$ is dense in the profinite topology on $G$.
Since $G$ cannot contain maximal subgroups of infinite index by Proposition~\ref{Prop:MaxFinite}, this implies that $\varphi_v(\Stab_M(v))=G$ for all $v\in \level{n}$.

However, since $G$ is just infinite by Proposition~\ref{Prop:JustInfinite}, the pro-normal closure (see Definition 3.1 of~\cite{Francoeur2021}) of $M$ in $G$ is the same as its closure in the profinite topology, which is of finite index.
Thus, $M$ is a quasi-prodense subgroup of infinite index in $G$, and therefore, by Theorem 3.4 of~\cite{Francoeur2021}, there must exist some $v\in \level{n}$ such that $\varphi_v(\Stab_M(v))$ is of infinite index in $G$. 
This contradicts the conclusion of the previous paragraph, and we deduce that such a subgroup $M$ cannot exist.
\end{proof}
Using Theorem~\ref{Thm:Intro2Generalized}, we can extend Proposition~\ref{prop:CommensurableMaxSubgroups} to all groups commensurable with a finitely generated subgroup of $G$.
\begin{theorem}\label{thm:commmax}
Let $G\leq\Aut(T_d)$ be a finitely generated self-similar branch group with the subgroup induction property and such that $\Phi_F(G)$ is non-trivial.
Let $H$ be a group commensurable with a finitely generated subgroup of~$G$.
Then, all maximal subgroups of $H$ are of finite index in $H$.
\end{theorem}
\begin{proof}
By Theorem~\ref{Thm:Intro2Generalized}, $H$ is commensurable with $G^{(\times k)}$ for some $0\leq k \leq d-1$.
Therefore, $H\times G^{(\times d-k)}$ is commensurable with $G^{(\times d)}$ and thus with $G$.
If $M\leq H$ is a maximal subgroup of infinite index of $H$, then $M\times G^{(\times d-k)}$ is a maximal subgroup of infinite index of a subgroup commensurable with $G$, which contradicts Proposition~\ref{prop:CommensurableMaxSubgroups}.
Thus, all maximal subgroups of $H$ must be of finite index.
\end{proof}
As a consequence of Theorem~\ref{thm:commmax}, we obtain the following fact regarding weakly maximal subgroups (recall that a subgroup is weakly maximal if it of infinite index and maximal for this property), which finishes the proof of the second part of Theorem~\ref{Thm:IntroStrong}.
\begin{corollary}\label{cor:wmax}
Let $G\leq\Aut(T_d)$ be a finitely generated self-similar branch group with the subgroup induction property and such that $\Phi_F(G)$ is non-trivial.
Let $H$ be a group commensurable with a finitely generated subgroup of~$G$.
Then, all weakly maximal subgroups of $H$ are closed in the profinite topology.
\end{corollary}
\begin{proof}
Let $W$ be a weakly maximal subgroup of $H$ and suppose that $W\ne \overline{W}$, where $\overline{W}$ denote the closure of $W$ in the profinite topology. By weak maximality, every subgroup strictly containing $W$ is of finite index in $H$ and thus contains $\overline{W}$. Therefore, $W$ is maximal in $\overline{W}$, with $\overline{W}$ of finite index in $H$ and $W$ of infinite index in $H$ and therefore in $\overline{W}$. This contradicts Theorem~\ref{thm:commmax}, since no group commensurable with $H$ can contain a maximal subgroup of infinite index.
\end{proof}
In order to prove the first part of Theorem~\ref{Thm:IntroStrong}, we need the following technical lemma.
\begin{lemma}\label{Lemma:FromProjectionsToDirectProduct}
Let $G_1,\dots, G_n$ be groups, and let $H\leq G_1\times \dots \times G_n$ be a subgroup of their direct product.
For $1\leq i \leq n$, let us denote by $\pi_i\colon G_1\times \dots \times G_n\to G_i$ the natural projection.
If, for all $1\leq i \leq n$, every normal subgroup of $\pi_i(H)$ is finitely generated and closed in $G_i$ with respect to the profinite topology on $G_i$, then every normal subgroup of $H$ is finitely generated and closed in the profinite topology on $G_1\times \dots \times G_n$.
\end{lemma}
\begin{proof}
Let us first prove the result in the case where $n=2$.
Suppose that $H\leq G_1\times G_2$ is such that for all $i\in \{1,2\}$, every normal subgroup of $\pi_i(H)$ is finitely generated and closed in $G_i$ with respect to the profinite topology on $G_i$ (note that this implies in particular that both $G_1$ and $G_2$ must be residually finite) and let $N\trianglelefteq H$ be a normal subgroup of $H$.
We want to show that $N$ is finitely generated and closed in $G_1\times G_2$, with respect to the profinite topology.

For $i=1,2$, let us denote $N_i=\pi_i(N)$.
As $N_i$ is normal in $\pi_i(H)$, it follows from our assumptions that $N_i$ is finitely generated and closed in the profinite topology on $G_i$.
Let $M_1\leq N_1$ be the largest subgroup such that $M_1\times \{1\}\leq N$, and similarly, let $M_2\leq N_2$ be the largest subgroup such that $\{1\}\times M_2\leq N$.
It follows from their definition and the fact that $N$ is normal in $H$ that $M_i$ is normal in $\pi_i(H)$, and thus finitely generated and closed in the profinite topology on $G_i$, for $i=1,2$.

By Goursat's lemma, there exists an isomorphism $\alpha\colon N_1/M_1\rightarrow N_2/M_2$ such that
\begin{equation}
	N/(M_1\times M_2) = \setst{(g,\alpha(g))\in N_1/M_1\times N_2/M_2}{g\in N_1/M_1}.\label{eq:DiagonalQuotientGoursat}
\end{equation}
In particular, $N/(M_1\times M_2)$ is isomorphic to $N_1/M_1$, which is finitely generated, since $N_1$ is.
As $M_1\times M_2$ is also finitely generated, we conclude that $N$ must be so as well, being an extension of a finitely generated subgroup by another.
Let us now focus on showing that $N$ is closed in the profinite topology on $G_1\times G_2$.

For $i=1,2$, let us equip $N_i$ with the subspace topology inherited from the profinite topology on $G_i$, and let us subsequently equip $N_i/M_i$ with the quotient topology.
With this topology, both $N_1/M_1$ and $N_2/M_2$ are topological groups, and we claim that the isomorphism $\alpha$ is a homeomorphism.
In order to prove this, let us first notice that for $i=1,2$, every finite-index subgroup of $N_i$ is open.
Indeed, if $L\leq N_i$ is any finite-index subgroup, then its characteristic core $C\leq L$ is also of finite index in $N_i$, since $N_i$ is finitely generated.
Having $C$ characteristic in $N_i\trianglelefteq \pi_i(H)$, we conclude that $C$ is normal in $\pi_i(H)$ and thus closed in $G_i$ by assumption.
Since the topology on $N_i$ is the subspace topology, $C$ is also closed in $N_i$.
However, being a closed subgroup of finite index in a topological group, $C$ must also be open in $N_i$, and as we have $C\leq L$, we conclude that $L$ must also be open in $N_i$, which finishes showing that every finite-index subgroup of $N_i$ is open and hence clopen.

The next ingredient we need in our proof that $\alpha$ is a homeomorphism is that subgroups of finite index in $N_i/M_i$ form a basis of neighbourhoods of the identity.
This follows from the fact that finite-index subgroups of $G_i$ form a basis of neighbourhoods of the identity for the profinite topology on $G_i$.
Indeed, if $U\subseteq N_i/M_i$ is an open set containing the identity, then by the definition of the topologies, there exists an open set $V\subseteq G_i$ containing the identity such that $V\cap N_i$ is the preimage of $U$ under the projection map $N_i\rightarrow N_i/M_i$.
Then, there is a finite-index subgroup $L\leq G_i$ such that $L\subseteq V$.
Therefore, $L\cap N_i$ is a finite-index subgroup of $N_i$ contained in $V\cap N_i$, and thus $LM_i/M_i$ is a finite-index subgroup of $N_i/M_i$ contained in $U$.

As $\alpha\colon N_1/M_1 \rightarrow N_2/M_2$ is an isomorphism, it sends finite-index subgroups of $N_1/M_1$ to finite-index subgroups of $N_2/M_2$.
Since we have just seen that finite-index subgroups of $N_i/M_i$ form a basis of neighbourhoods of the identity, we can conclude, using the fact that we are in a topological group, that $\alpha$ sends open sets to open sets.
As the same argument also works with $\alpha^{-1}$, we conclude that $\alpha$ is a homeomorphism.

Having established this, let us come back to our task of showing that $N$ is closed in $G_1\times G_2$.
To do this, it suffices to show that $N/(M_1\times M_2)$ is closed in the group $G_1/M_1 \times G_2/M_2$ equipped with the quotient topology.
Indeed, if we denote by $\pi\colon G_1\times G_2 \rightarrow G_1/M_1 \times G_2/M_2$ the quotient map, which is continuous by definition, then $N=\pi^{-1}(N/(M_1\times M_2))$, so it must be closed.
Furthermore, since $N_i$ is closed in $G_i$ and contains $M_i$ for $i=1,2$, the subgroup $N_1/M_1 \times N_2/M_2$ is closed in $G_1/M_1\times G_2/M_2$.
Thus, it suffices to prove that $N$ is closed in the group $N_1/M_1 \times N_2/M_2$ equipped with the subspace topology (which is the same as the product topology coming from the previously considered topologies on $N_1/M_1$ and $N_2/M_2$).
By equation (\ref{eq:DiagonalQuotientGoursat}), we see that $N/(M_1\times M_2)$ is nothing else than the graph of the continuous map $\alpha\colon N_1/M_1 \rightarrow N_2/M_2$. Since $M_2$ is closed in $G_2$, the group $N_2/M_2$ is Hausdorff, and since the graph of a continous map into a Hausdorff space is closed, we conclude that $N/(M_1\times M_2)$ is closed in $N_1/M_1 \times N_2/M_2$ and thus that $N$ is closed in $G_1\times G_2$ by the above discussion.
This finishes the proof for the case $n=2$.

To treat the case $n>2$, we proceed by induction.
Let us suppose that the result holds for some $n\geq 2$, and let $H\leq G_1\times \dots \times G_{n+1}$ be a subgroup of a direct product of $n+1$ groups satisfying the assumptions of the lemma.
We want to show that every normal subgroup of $H$ is finitely generated and closed in the profinite topology on $G_1\times \dots \times G_{n+1}$.

Let us write $G_1^{\perp} = G_2\times \dots \times G_{n+1}$, so that $H\leq G_1\times G_1^{\perp}$, and let us denote by $\pi_{1^{\perp}}\colon G_1\times G_1^{\perp}\rightarrow G_1^{\perp}$ the natural projection on this factor.
Then, $\pi_{1^{\perp}}(H)$ is a subgroup of $G_2\times \dots \times G_{n+1}$ such that for every $i\in \{2,\dots, n+1\}$, every normal subgroup of $\pi_i(\pi_{1^{\perp}}(H))$ is finitely generated and closed in the profinite topology on $G_i$.
Indeed, this follows directly from our assumptions on $H$ and the fact that $\pi_i(\pi_{1^{\perp}}(H)) = \pi_i(H)$.
Therefore, by our induction hypothesis, every normal subgroup of $\pi_{1^{\perp}}(H)$ is finitely generated and closed in the profinite topology on $G_1^{\perp} = G_2\times \dots \times G_{n+1}$.
Therefore, we can apply the case $n=2$ to $H\leq G_1\times G_1^{\perp}$ to conclude that every normal subgroup of $H$ is finitely generated and closed in the profinite topology on $G_1\times G_1^{\perp} = G_1\times \dots \times G_{n+1}$.
\end{proof}
Using the previous lemma, we can prove subgroup separability for self-similar branch groups with the subgroup induction property, thus finishing the proof of Theorem~\ref{Thm:IntroStrong}.
\begin{theorem}\label{Thm:StrongSeparability}
Let $G\leq\Aut(T)$ be a finitely generated self-similar branch group with the subgroup induction property.
Then, for each finitely generated subgroup $H\leq G$ and each normal subgroup $N\trianglelefteq H$, $N$ is finitely generated and closed in the profinite topology on $G$.
In particular, $G$ is subgroup separable.
\end{theorem}
\begin{proof}
Let $\X$ be the class of subgroups of $G$ such that all their normal subgroups are finitely generated and closed in the profinite topology on $G$.
As $G$ has the subgroup induction property, it suffices to prove that this class is inductive.

Firstly, since $G$ is residually finite, we have $\{1\}\in \X$, and it follows from the fact that $G$ is just infinite (Theorem~\ref{Thm:IntroWeak1}) and finitely generated that $G\in \X$.
Thus, $\X$ satisfies Condition~\eqref{Item:DefSubgroupInduction1} of an inductive class.

Secondly, let $H\leq L$ be two subgroups of $G$, with $H$ of finite index in $L$.
Let us suppose that $H\in \X$, and let $N\trianglelefteq L$ be any normal subgroup of $L$.
Then, $N\cap H$ is a normal subgroup of $H$ and is thus finitely generated and closed in the profinite topology on $G$.
As $N\cap H$ is of finite index in $N$, the same then holds for $N$.
This shows that $L\in \X$ and thus that $\X$ satisfies Condition~\eqref{Item:DefSubgroupInduction2} of an inductive class.

Lastly, let $H$ be a subgroup of $\Stab_G(\level 1)$ such that all first-level sections of $H$ are in $\X$.
Let us denote by $\psi\colon \Stab_G(\level{1}) \rightarrow G^{\level{1}}$ the group homomorphism defined by $\psi(g) = (\varphi_v(g))_{v\in \level{1}}$.
It is immediate that $\psi$ is injective.
We have $\psi(H)\leq G^{\level{1}}$, with $\pi_v(\psi(H))=\varphi_v(H)\in \X$ for all $v\in \level{1}$, where $\pi_v\colon G^{\level{1}}\rightarrow G$ is the natural projection onto the component indexed by $v$.
Thus, by Lemma~\ref{Lemma:FromProjectionsToDirectProduct}, every normal subgroup of $\psi(H)$ is finitely generated and closed in the profinite topology on $G^{\level{1}}$.
Since $\psi$ is injective we immediately conclude that every normal subgroup of $H$ is finitely generated.
As $\psi$ is a group homomorphism, it is continuous with respect to the profinite topologies on $\Stab_G(\level{1})$ and~$G^{\level{1}}$.
Therefore, every normal subgroup of $H$ is closed in the profinite topology on $\Stab_G(\level 1)$, since it is the preimage by $\psi$ of a closed subgroup of $G^{\level{1}}$, where we used once again the fact that $\psi$ is injective. 
Since $\Stab_G(\level{1})$ is of finite index in $G$, any closed subgroup in the profinite topology on $\Stab_G(\level{1})$ must also be closed in the profinite topology on $G$.
Indeed, recall that a subgroup is closed in the profinite topology if and only if it is the intersection of finite-index subgroups, and finite-index subgroups of $\Stab_G(\level{1})$ are also finite-index subgroups of $G$.
Thus, we have verified that $\X$ also satisfies Condition~\eqref{Item:DefSubgroupInduction3} of an inductive class, which concludes the proof.
\end{proof}
As a direct corollary, we obtain that many groups with the subgroup induction property have a solvable membership word problem, meaning that there exists an algorithm that, given $g\in G$ and a finitely generated subgroup $H\leq G$, decides whether $g$ belongs to $H$.
Before stating the result, we recall that a group $G\leq \Aut(T)$ has the \defi{congruence subgroup property} if any finite-index subgroup contains the stabilizer $\Stab_G(\level n)$ of a level.
\begin{corollary}\label{Cor:GenWordProbl}
Let $G\leq\Aut(T)$ be a finitely generated self-similar branch group with both the congruence subgroup property and the subgroup induction property.
Then $G$ has a solvable membership problem.
\end{corollary}
\begin{proof}
For a finitely generated subgroup $G$ of $\Aut(T)$, we have an algorithm enumerating all the stabilizer $\Stab_G(\level n)$ of a level. If $G$ has the congruence subgroup property, this gives us an algorithm enumerating all the finite-index subgroups and hence all the finite quotients of $G$.
Together with subgroup separability, this implies that $G$ has a solvable membership problem \cite{Malcev}.
\end{proof}
As another application of the subgroup induction property, we expand a result of Skipper and Wesolek. See~\cite{MR4082048} for more and details on the Cantor–Bendixson rank.
\begin{proposition}\label{Prop:CBRank}
Let $G\leq \Aut(T)$ be a finitely generated super strongly self-replicating branch group with both the subgroup induction property and the congruence subgroup property.
Then $\Sub(G)$ has Cantor–Bendixson rank~$\omega$.
\end{proposition}
\begin{proof}
The group $G$ is just infinite by Proposition~\ref{Prop:JustInfinite} and subgroup separable by Theorem~\ref{Thm:StrongSeparability}.
Since it is subgroup separable and has the congruence subgroup property, it has ``well-approximated subgroups''~\cite[Lemma 2.12]{MR4082048}.
Moreover, the so-called ``Grigorchuk-Nagnibeda alternative''~\cite[Definition 4.1]{MR4082048} is a weak form of the weak subgroup induction property in the sense of Definition~\ref{Definition2}.
Hence~\cite[Corollary 5.4]{MR4082048} applies.
\end{proof}
It was brought to our attention by Anitha Thillaisundaram that as a consequence of Theorem~\ref{Thm:Intro2Generalized}, we can extend a result of Gandini on the rational cohomology of the Grigorchuk group~\cite{MR2996406} to a larger class of groups. This yields in particular more counterexamples to a conjecture of Petrosyan~\cite{MR2324601} as well as to a question of Jo and Nucinkis~\cite{MR2405893}. We refer the interested reader to~\cite{MR2996406} for the relevant definitions and details.
\begin{proposition}\label{Prop:Cohomology}
Let $G\leq\Aut(T_d)$ be a finitely generated self-replicating branch group with the weak subgroup induction.
Then, $G$ has jump rational cohomology of height~$1$ and infinite rational cohomology.
\end{proposition}
\begin{proof}
This follows from Theorem~\ref{Thm:Intro2Generalized}, Theorem 4.9 of~\cite{MR2996406} and Remark 4.6 of~\cite{MR2996406}.
\end{proof}
\subsection{Weak subgroup induction property versus subgroup induction property.}\label{SubSec:Alternative}
While we were able to prove some results of Section~\ref{Section:General} under the hypothesis that $G$ has the weak subgroup induction property, for all the results of Subsection~\ref{SubSec:SIP} we require that $G$ has the stronger version of the subgroup induction property.

It directly follows from the definitions that the subgroup induction property implies the weak subgroup induction property and it is natural to ask if the converse is true.
Observe that, by Proposition~\ref{prop:SelfSimSIP}, a self-similar subgroup with the subgroup induction property is automatically strongly self-replicating. It hence seems reasonable to restrict our study to the class of strongly self-replicating group. While we are not yet able to prove that the two versions of the subgroup induction property are equivalent for strongly self-replicating groups we can show it under a slightly stronger hypothesis.
\begin{proposition}
Let $G$ be a super strongly self-replicating group with the weak subgroup induction property. Suppose that moreover $G$ has the congruence subgroup property. Then $G$ has the subgroup induction property.
\end{proposition}
This proposition follows from Lemma~\ref{Lemma:fisdp}.
Indeed, it is a simple verification that any super strongly self-replicating group $G$ with the congruence subgroup property satisfies the hypothesis of Lemma~\ref{Lemma:fisdp} on finite-index subgroups of $G$.
\begin{lemma}\label{Lemma:fisdp}
Let $G\leq\Aut(T_d)$ be a group with the weak subgroup induction property. Suppose that if $H$ is a finite-index subgroup of $G$, then there exists a level $n$ such that $\Stab_H(\level n)$ is a subdirect product of $G^{d^n}$. Then $G$ has the subgroup induction property.
\end{lemma}
\begin{proof}
It follows from Definition~\ref{Definition2} and Definition~\ref{Definition:StrongSubgroupInduction} that it is enough to show that any inductive class contains all finite-index subgroups of $G$, but this follows from the hypothesis on finite-index subgroups of $G$ and the definition of an inductive class.
\end{proof}
%
%
%
%
%
\section{Subgroup induction property for \GGS{} groups}\label{Section:GGS}
Until now, only two groups were known to possess the subgroup induction property, namely the first Grigorchuk group acting on the binary rooted tree~\cite{MR2009443} and the Gupta-Sidki group acting on the ternary rooted tree~\cite{MR3513107}.
In this section, we exhibit an infinite family of groups with this property, the torsion \GGS{} groups.
\subsection{General facts about \GGS{} groups}
Defined as a generalisation of both the second Grigorchuk group and the Gupta-Sidki groups, the Grigorchuk-Gupta-Sidki groups, or \GGS{} groups, are a well-studied family of groups acting on rooted trees.
Let us first define them, before listing a few of their properties that will be useful later on.
\begin{definition}\label{def:GGSGroups}
Let $p$ be an odd prime number, let $T$ the $p$-regular tree and let $\mathbf{e}=(e_0,\dots,e_{p-2})$ be a vector in $(\mathbf{F}_p)^{p-1}\setminus\{0\}$.
The \GGS{} group $G_{\mathbf{e}}=\gen{a,b}$ with defining vector $\mathbf e$ is the subgroup of $\Aut(T)$ generated by the two automorphisms 
\begin{align*}
a&=\varepsilon\cdot(1,\dots,1)\\
b&=(a^{e_0},\dots,a^{e_{p-2}},b)
\end{align*}
where $\varepsilon$ is the cyclic permutation $(1 2 \dots p)$.
\end{definition}
\begin{remark}\label{Remark:Prime}
The definition of \GGS{} groups naturally extends to $n$-regular rooted trees for $n$ an integer greater than $1$.
For example, Vovkivsky proved Proposition~\ref{Prop:Torsion} in the context of $n=p^r$ a prime power.
However, it is usual to restrict the study of \GGS{} groups to the ones acting on $p$-regular rooted trees with $p$ prime, as some results (for example the classification of Lemma~\ref{lemma:ClassificationSubgroups}) depend on this fact.
This is why in the following we will always assume that $T$ is a $p$-regular rooted tree with $p$ prime.
\end{remark}
The \defi{Gupta-Sidki $p$-group} is the \GGS{} group acting with defining vector $(1,-1,0,\dots,0)$.

A \GGS{} group $G$ acting on the $p$-regular rooted tree $T$ is always infinite (since $\mathbf{e}\neq 0$,~\cite{MR1754681}), residually-(finite $p$).
By~\cite{MR3152720}, a \GGS{} group $G_{\mathbf{e}}$ is branch if and only if $\mathbf{e}$ is not constant, which is equivalent by~\cite{MR3652785} to the fact that $G_{\mathbf{e}}$ has the congruence subgroup property.
Even if the \GGS{} groups with constant defining vector are not branch, they are still weakly branch~\cite{MR3652785}.

If a \GGS{} group $G$ is torsion, then it is super strongly self-replicating~\cite{Uria}.
Furthermore, $G$ does not contain maximal subgroups of infinite index, a fact that is due to Pervova~\cite{MR2197824} in the torsion case and to the first-named author and Anitha Thillaisundaram in the non-torsion case~\cite{2020arXiv200502346F}.
This implies that every maximal subgroup of a branch \GGS{} group $G$ is normal and of index $p$, and thus, by Lemma~\ref{Lemma:DerivedSubgroup}, that the derived subgroup $G'$ is contained in $\Phi_F(G)=\Phi(G)$, the Frattini subgroup (that is, the intersection of all maximal subgroups) of~$G$.

In what follows, we will frequently make use, often without explicitly mentioning them, of several structural properties of \GGS{} groups, which we summarize in the proposition below.
A proof of these classical facts can be found, among other sources, in~\cite{MR3152720}.
\begin{proposition}\label{Prop:Derived}
Let $p$ be a prime number, let $G$ be a \GGS{} group acting on the $p$-regular rooted tree $T$, and let $G'$ be its derived subgroup.
Then,
\begin{enumerate}[(i)]
\item $\Stab_G(\level{1})=\gen{b}^G=\gen{b,aba^{-1}\dots,a^{p-1}ba^{-(p-1)}}$,
\item $G=\gen{a}\ltimes\Stab_G(\level{1})$,
\item $G/G'=\gen{aG',bG'}\cong C_p\times C_p$,\label{item:Abelianization}
\item $\Stab_G(\level{2})\leq G'\leq\Stab_G(\level{1})$.
\end{enumerate}
\end{proposition}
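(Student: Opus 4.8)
The four items are essentially independent, so the plan is to treat them in order, using throughout the self-similar data $a=\varepsilon\cdot(1,\dots,1)$ and $b=(a^{e_0},\dots,a^{e_{p-2}},b)$. The one computational tool I rely on is the (injective) section homomorphism $\psi\colon\Stab_G(\level{1})\to G^{p}$, $g\mapsto(\varphi_{v_1}(g),\dots,\varphi_{v_p}(g))$ over the level-one vertices, together with the fact that conjugation by $a$ cyclically permutes the $p$ coordinates of $\psi$. For (i), since $a$ induces the $p$-cycle $\varepsilon$ on $\level{1}$ while $b$ fixes $\level{1}$, the first-level action identifies $G/\Stab_G(\level{1})$ with $\gen{\varepsilon}\cong C_p$; thus $\Stab_G(\level{1})\trianglelefteq G$ has index $p$ and $\gen{b}^{G}\le\Stab_G(\level{1})$. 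For the reverse inclusion set $N=\gen{a^{i}ba^{-i}\,:\,0\le i\le p-1}$ (using $a^{p}=1$). Conjugation by $a$ permutes the listed generators cyclically, so $a$ normalises $N$, and since $b\in N$ the element $b$ normalises $N$ as well; hence $N\trianglelefteq G$ and $N=\gen{b}^{G}$. Finally $G/N$ is generated by the image of $a$, so it is cyclic of order dividing $p$, yet it surjects onto $G/\Stab_G(\level{1})\cong C_p$; therefore $N=\Stab_G(\level{1})$, which is exactly (i).

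Statement (ii) is then immediate: $\Stab_G(\level{1})$ is normal, $G=\gen{a}\Stab_G(\level{1})$ because the image of $a$ generates $G/\Stab_G(\level{1})$, and $\gen{a}\cap\Stab_G(\level{1})=\{1\}$ since $a^{i}$ fixes $\level{1}$ only when $p\mid i$. For (iii), $G/G'=\gen{\bar a,\bar b}$ is abelian on two generators, and $G'\le\Stab_G(\level{1})$ because $G/\Stab_G(\level{1})\cong C_p$ is abelian. The upper bound $\abs{G/G'}\le p^{2}$ comes from $\bar a^{p}=1$ (as $a^{p}=1$) together with $\bar b^{p}=1$, the latter obtained from $\psi(b^{p})=(1,\dots,1,b^{p})$ and the ensuing recursion, which forces $b^{p}\in G'$. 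The matching lower bound $\abs{G/G'}\ge p^{2}$ is where $\mathbf e\neq0$ is genuinely used: composing $\psi$ with the level-one exponent map $\alpha\colon G\to\Z/p$ coordinatewise sends $a^{i}ba^{-i}$ to the $i$-th cyclic shift of $(e_0,\dots,e_{p-2},0)$, and the nonvanishing of $\mathbf e$ allows one to combine $\alpha$ with this to produce a surjection of $G$ onto $C_p\times C_p$ with $a\mapsto(1,0)$ and $b\mapsto(0,1)$. Hence $G/G'\cong C_p\times C_p$, and in particular $[\Stab_G(\level{1}):G']=p$ with $\Stab_G(\level{1})/G'=\gen{\bar b}$.

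The remaining inclusion $\Stab_G(\level{2})\le G'$ of (iv) is the main obstacle and the only genuinely computational point, the opposite inclusion $G'\le\Stab_G(\level{1})$ being already recorded above. Here I would analyse $\Stab_G(\level{1})/\Stab_G(\level{2})$, which $\psi$ embeds into $(G/\Stab_G(\level{1}))^{p}\cong(\Z/p)^{p}$ as the cyclic-shift-invariant subspace spanned by the shifts of $(e_0,\dots,e_{p-2},0)$, and then compare it with the image of $G'$. Computing the sections of the basic commutator $[a,b]$ and of its $a$-conjugates shows that $\psi(G')$ already contains enough of this subspace to pin down $G'/\Stab_G(\level{2})$ as the index-$p$ subgroup of $\Stab_G(\level{1})/\Stab_G(\level{2})$ singled out by $\Stab_G(\level{1})/G'=\gen{\bar b}$, which gives $\Stab_G(\level{2})\le G'$. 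This is exactly the classical GGS computation recorded in \cite{MR3152720}; note that the description of these shift-invariant subspaces rests on the factorisation $x^{p}-1=(x-1)^{p}$ over $\mathbf{F}_p$, so it is at this step that the primality of $p$ (Remark~\ref{Remark:Prime}) is essential.
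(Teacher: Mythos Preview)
The paper does not prove this proposition: it is stated as a summary of classical structural facts about \GGS{} groups, with all proofs deferred to \cite{MR3152720}. Your sketch therefore already goes beyond what the paper offers. Items (i) and (ii) are argued completely and correctly. For (iii), the upper bound is clear (in fact the recursion $\psi(b^{p})=(1,\dots,1,b^{p})$ yields $b^{p}=1$ outright, not merely $b^{p}\in G'$), and your compressed lower-bound argument is also sound once one observes that the cyclic shift $\sigma$ acts nilpotently on the nonzero $\sigma$-submodule $V\subseteq(\Z/p)^{p}$ spanned by the shifts of $(e_0,\dots,e_{p-2},0)$; hence the image of $b$, which generates $V$ as a $\sigma$-module, is nonzero in the coinvariants $V/(\sigma-1)V$, and the composite $G\twoheadrightarrow\gen{a}\ltimes V\twoheadrightarrow C_p\times V_{\sigma}\cong C_p\times C_p$ is the required surjection. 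For (iv) you correctly isolate $\Stab_G(\level{2})\le G'$ as the substantive computation and defer it to \cite{MR3152720}, which is precisely what the paper does for the entire proposition.
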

We will also use without mentioning it the following trivial fact.
\begin{lemma}
Let $T$ be a $d$-regular rooted tree and $a=\varepsilon\cdot(1,\dots,1)$ where $\varepsilon$ is the cyclic permutation $(12\dots d)$.
For any element $(x_1,x_2,\dots,x_d)$ of $\Stab(\level1)$ we have
\[(x_1,x_2,\dots,x_d)a=a(x_2,x_3,\dots,x_d,x_1). \]
\end{lemma}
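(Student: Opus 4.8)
The plan is to prove the identity by a direct computation using the wreath-product structure of $\Aut(T)$, or equivalently the section relation $g\cdot(vw)=(g\cdot v)(\restr{g}{T_v}\cdot w)$ recalled in Section~\ref{Section:Branch}. Writing $p=d$ for the degree of the tree, recall that $(x_1,\dots,x_p)\in\Stab(\level1)$ denotes the automorphism fixing every first-level vertex $i\in\{1,\dots,p\}$ and acting on the subtree $T_i$ by the section $x_i$, whereas $a=\varepsilon\cdot(1,\dots,1)$ has trivial sections and permutes the top subtrees, so that $a\cdot i=\varepsilon(i)$ and $\restr{a}{T_i}=1$ for all $i$. Here $\varepsilon=(1\,2\,\dots\,p)$, hence $\varepsilon(i)=i+1$ for $i<p$ and $\varepsilon(p)=1$. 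Throughout I will use the convention that $gh$ means ``first apply $h$, then $g$'', i.e.\ $(gh)\cdot x=g\cdot(h\cdot x)$.

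First I would evaluate the left-hand side on an arbitrary vertex $iw$, where $i\in\{1,\dots,p\}$ is a first-level letter and $w$ is the remaining word:
\[
\bigl((x_1,\dots,x_p)\,a\bigr)\cdot(iw)=(x_1,\dots,x_p)\cdot\bigl(\varepsilon(i)\,w\bigr)=\varepsilon(i)\,\bigl(x_{\varepsilon(i)}\cdot w\bigr),
\]
since $a$ sends $iw$ to $\varepsilon(i)\,w$ (its section being trivial), after which $(x_1,\dots,x_p)$ fixes the vertex $\varepsilon(i)$ and acts on its subtree by the section $x_{\varepsilon(i)}$.

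Next I would evaluate the right-hand side on the same vertex. Noting that the tuple $(x_2,\dots,x_p,x_1)$ has $i$-th entry equal to $x_{\varepsilon(i)}$ (this is exactly the content of the cyclic shift: its entries are $x_2,\dots,x_p,x_1$, which for $i<p$ gives $x_{i+1}$ and for $i=p$ gives $x_1=x_{\varepsilon(p)}$), I obtain
\[
\bigl(a\,(x_2,\dots,x_p,x_1)\bigr)\cdot(iw)=a\cdot\bigl(i\,(x_{\varepsilon(i)}\cdot w)\bigr)=\varepsilon(i)\,\bigl(x_{\varepsilon(i)}\cdot w\bigr),
\]
where the first equality applies the section $x_{\varepsilon(i)}$ at the fixed vertex $i$ and the second applies $a$. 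Since both sides agree on every vertex $iw$, the two automorphisms coincide, which is the claim.

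The computation carries no genuine obstacle; the only points requiring care are keeping the composition convention for $gh$ consistent throughout, and verifying that the cyclic shift $(x_1,\dots,x_p)\mapsto(x_2,\dots,x_p,x_1)$ realises precisely the reindexing $x_i\mapsto x_{\varepsilon(i)}$ dictated by $\varepsilon$. One could instead phrase the argument abstractly as the standard semidirect-product relation $(x_1,\dots,x_p)\,\varepsilon=\varepsilon\,(x_{\varepsilon(1)},\dots,x_{\varepsilon(p)})$ in $\Aut(T)\wr\mathrm{Sym}(p)$, but the vertex-by-vertex check makes the convention explicit and leaves nothing further to verify.
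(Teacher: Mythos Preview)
Your proof is correct. The paper does not actually give a proof of this lemma (it is stated there as a ``well-known trivial result''), and your vertex-by-vertex verification via the section formula $g\cdot(vw)=(g\cdot v)(\restr{g}{T_v}\cdot w)$ is exactly the natural computation; your composition convention $(gh)\cdot v=g\cdot(h\cdot v)$ is also consistent with the one the paper uses elsewhere (e.g.\ in the proof of Lemma~\ref{lemma:LengthOfProjectionsOfPowers}).
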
 
As it follows from Lemma~\ref{Lemma:Torsion} that non-torsion \GGS{} groups cannot have the subgroup induction property, we will now focus exclusively on torsion \GGS{} groups.
Luckily, there is a simple criterion, due to Vovkivksy, to decide whether a \GGS{} group is torsion or not.
\begin{proposition}[{\cite[Theorem 1]{MR1754681}}]\label{Prop:Torsion}
Let $p$ be a prime, let  $\mathbf{e}=(e_0,\dots,e_{p-2})$ be a vector in $(\mathbf{F}_p)^{p-1}\setminus\{0\}$ and let $G_{\mathbf{e}}$ be the corresponding \GGS{} group. Then, $G_{\mathbf{e}}$ is torsion if and only if
\[\sum_{i=0}^{p-2}e_i=0.\]
\end{proposition}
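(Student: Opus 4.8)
The plan is to reduce the torsion question to the behaviour of sections and then treat the two implications separately. The basic tool is the following recursion. If $g\in\Stab_G(\level1)$, write $g=(g_1,\dots,g_p)$ for its tuple of first level sections; then $g$ has finite order if and only if every $g_i$ does, and in that case $\mathrm{ord}(g)=\mathrm{lcm}_i\,\mathrm{ord}(g_i)$. If instead $g\notin\Stab_G(\level1)$, then $g$ acts on $\level1$ as a nontrivial power of the $p$-cycle $\varepsilon$, so $g^p\in\Stab_G(\level1)$, and the $p$ sections of $g^p$ are cyclic products of the components of $g$, hence pairwise conjugate in $G$; consequently $g$ has finite order if and only if one such section does. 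Thus in both cases finite order is detected after passing to sections, and one may attempt an induction on the word length $\abs{g}$ in the generators $a,b$.

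For the reverse implication, suppose $s\coloneqq\sum_{j=0}^{p-2}e_j\neq 0$ in $\mathbf{F}_p$, and I would exhibit an element of infinite order, namely $x\coloneqq a^{s}b$. Since $s\neq 0$, the element $x$ acts on $\level1$ as the $p$-cycle $\varepsilon^{s}$, so $x^p\in\Stab_G(\level1)$; writing $x=\varepsilon^{s}(a^{e_0},\dots,a^{e_{p-2}},b)$ and using that the first $p-1$ components are mutually commuting powers of $a$ while the last is $b$, each section of $x^p$ is a cyclic product of these components, that is, an element of the form $a^{c}ba^{s-c}$, which is conjugate in $G$ to $a^{s}b=x$. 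Hence a section of $x^p$ is conjugate to $x$ itself, so $\mathrm{ord}(x)=\mathrm{ord}(\mathrm{section})\leq\mathrm{ord}(x^p)\leq\mathrm{ord}(x)$, which would force $\gcd(p,\mathrm{ord}(x))=1$ if $x$ had finite order. But the image of $x$ in $G/\Stab_G(\level1)\cong C_p$ is $\varepsilon^{s}\neq 1$, so $p\mid\mathrm{ord}(x)$, a contradiction. Therefore $x$ has infinite order and $G$ is not torsion.

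For the direct implication, suppose $s=0$. The conceptual reason torsion now holds is a pair of ``flow'' relations obtained by projecting to the abelianization $G/G'\cong C_p\times C_p$ of Proposition~\ref{Prop:Derived}\ref{item:Abelianization}: writing $\pi_a,\pi_b\colon G\to\mathbf{F}_p$ for the two coordinates, one checks on the generators $b,aba^{-1},\dots$ of $\Stab_G(\level1)$ and extends by homomorphism that for $g\in\Stab_G(\level1)$ with sections $(g_1,\dots,g_p)$ one has $\sum_i\pi_b(g_i)=\pi_b(g)$ and $\sum_i\pi_a(g_i)=s\,\pi_b(g)$. When $s=0$ the second relation reads $\sum_i\pi_a(g_i)=0$, so the sections cannot accumulate $a$-content, which is exactly the phenomenon that produced infinite order above. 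I would then prove by induction on a complexity measure that every $g\in G$ has finite order: the natural choice combines the number of $b$-syllables of $g$ with a secondary bookkeeping of the $a$-exponents, and the core lemma states that, when $s=0$, passing to the sections (after raising to the $p$-th power when $g\notin\Stab_G(\level1)$) strictly decreases this measure, the base cases being the elements conjugate to a power of $a$ or of $b$, which have order $p$.

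The main obstacle is precisely this complexity-reduction lemma. Raw word length does not strictly decrease, since the last section of $b$ is again $b$; the delicate point is to design a measure, tracking $b$-length together with the controlled redistribution of $a$-powers permitted by $\sum_i\pi_a(g_i)=0$, and to verify through the case analysis (whether $g$ lies in $\Stab_G(\level1)$, and how the $b$-syllables distribute among the coordinates) that the measure genuinely drops. Once this is established, the induction closes and every element of $G$ is torsion, completing the equivalence.
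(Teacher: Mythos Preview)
The paper does not prove this proposition at all; it is quoted from Vovkivsky \cite{MR1754681} and used as a black box, so there is no in-paper argument to compare yours against.

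Evaluating your proposal on its own: the reverse implication is essentially complete and correct. The element $x=a^{s}b$ with $s=\sum_i e_i\neq 0$ is the standard witness, and your self-similarity argument---that each first-level section of $x^{p}$ is conjugate to $x$, forcing $\mathrm{ord}(x^{p})=\mathrm{ord}(x)$ and hence $\gcd(p,\mathrm{ord}(x))=1$, in contradiction with $p\mid\mathrm{ord}(x)$---is sound.

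The direct implication, however, is only a programme, not a proof. You correctly identify the overall architecture (induction on a complexity measure, reduction to sections after a possible $p$-th power) and correctly isolate where the hypothesis $s=0$ enters, via the relation $\sum_i\pi_a(g_i)=0$. But you then explicitly stop: you call the complexity-reduction lemma ``the main obstacle'', speak of what you ``would'' do, and never define the measure or carry out the case analysis. That lemma \emph{is} the theorem in this direction. As you yourself note, raw $b$-length does not strictly decrease (the last section of $b$ is $b$ again), and designing a measure that genuinely drops---and verifying it through all the cases---is precisely the nontrivial content of Vovkivsky's argument and of the earlier special cases of Grigorchuk and Gupta--Sidki. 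So what you have for the forward direction is an accurate roadmap with the destination left unreached; the gap is real and is exactly the part that requires work.
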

It follows from this criterion that torsion \GGS{} groups have non-constant defining vectors and hence are branch and have the congruence subgroup property.

Subgroups of torsion \GGS{} groups that do not fix the first level satisfy a dichotomy that will be crucial in establishing the subgroup induction property.
This dichotomy was first obtained by Garrido for Gupta-Sidki groups, and then generalised to all torsion \GGS{} groups by the second named author.
\begin{lemma}[\cite{MR3513107,L2019}]\label{Lemma:GGSSections}
Let $G$ be a torsion \GGS{} group and $H$ be a subgroup of $G$ that is not contained in $\Stab_G(\level{1})$.
Then either all first-level sections of $H$ are equal to $G$, or they are all contained in $\Stab_G(\level{1})$, so that $\Stab_H(\level{1})=\Stab_H(\level{2})$.
\end{lemma}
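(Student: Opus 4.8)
The plan is to prove the dichotomy for a single first-level section and then to propagate it to all of them using transitivity, isolating the genuinely computational part for the end.

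\emph{Transitivity and conjugacy of the sections.} First I would observe that, since $H\not\leq\Stab_G(\level1)$ and $G/\Stab_G(\level1)\cong C_p$ acts freely on $\level1$, the image of $H$ in $\mathrm{Sym}(\level1)$ is a nontrivial subgroup of a cyclic group of prime order, hence all of it. Thus $H$ acts transitively on $\level1$, $\Stab_H(\level1)=\Stab_H(v)$ for every $v\in\level1$, and $K:=\Stab_H(\level1)$ is normal in $H$. Fixing $h\in H\setminus K$, the cocycle identity for sections gives, for $g\in K$ and $v\in\level1$, the relation $\varphi_{h\cdot v}(hgh^{-1})=\varphi_v(h)\varphi_v(g)\varphi_v(h)^{-1}$. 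Because the rooted automorphism $a$ has trivial sections, $\varphi_v(h)=\varphi_v\bigl(\beta\bigr)$ for the $\Stab_G(\level1)$-part $\beta$ of $h$, so $\varphi_v(h)\in\varphi_v(\Stab_G(\level1))=G$ by strong self-replicatingness. Conjugation by $\varphi_v(h)$ is therefore an inner automorphism of $G$ carrying $\varphi_v(K)$ onto $\varphi_{h\cdot v}(K)$, and since $h$ cycles through all of $\level1$ the sections $\varphi_v(K)$ are pairwise $G$-conjugate. As both candidate conclusions, ``equal to $G$'' and ``contained in $\Stab_G(\level1)$'', are invariant under conjugation by elements of $G$ (using $\Stab_G(\level1)\trianglelefteq G$), it suffices to establish the dichotomy for one fixed section $U:=\varphi_{v_0}(K)$.

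\emph{Reduction to the abelianization.} Next I would use that every maximal subgroup of $G$ is normal of index $p$, so that $\Phi(G)=G'$ and $G/G'\cong C_p\times C_p=\gen{aG'}\times\gen{bG'}$ with $\Stab_G(\level1)/G'=\gen{bG'}$. Since $G'=\Phi(G)$ consists of nongenerators, a subgroup $U\leq G$ equals $G$ if and only if its image in $G/G'$ is everything. If $U\subseteq\Stab_G(\level1)$ we are in the second case, and then every element of $K$ has all its first-level sections in $\Stab_G(\level1)$, which is precisely the asserted equality $\Stab_H(\level1)=\Stab_H(\level2)$. Otherwise $U$ has nonzero $aG'$-component in $G/G'$, and it then remains to produce a second element of $U$ whose class in $G/G'$ is not proportional to the first, equivalently an element of $U\cap\Stab_G(\level1)$ lying outside $G'$; this forces $U$ to surject onto $G/G'$ and hence $U=G$.

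\emph{The crux.} The main obstacle is this last step: extracting from $U$ an element of $\Stab_G(\level1)\setminus G'$. Here I would compute explicitly with an escaping element $h=a^k\beta$, where $k\not\equiv0$ and $\beta=(\beta_1,\dots,\beta_p)\in\Stab_G(\level1)$. Since $G$ is torsion, $h^p\in K$, and the shift rule yields $\varphi_{v_0}(h^p)=\beta_{i_1}\beta_{i_2}\cdots\beta_{i_p}$, a product of all the sections $\beta_i$. Reading this modulo $G'$ and using $b=(a^{e_0},\dots,a^{e_{p-2}},b)$ together with the torsion relation $\sum_i e_i=0$, the $aG'$-contributions of the $\beta_i$ cancel while a $bG'$-contribution survives, provided the $\Stab_G(\level1)$-part $\beta$ of $h$ lies outside $G'$; this delivers the desired element. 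The delicate point, which I expect to be the technical heart of the argument, is the complementary case in which \emph{every} escaping element of $H$ has image in $\gen{aG'}$, so that this naive computation returns nothing in the $bG'$-direction. Handling it requires the explicit classification of the relevant subgroups of $G$ from Lemma~\ref{lemma:ClassificationSubgroups}, which is exactly the place where primality of $p$ and the torsion hypothesis are indispensable.
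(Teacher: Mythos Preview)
The paper does not prove this lemma; it is quoted without proof from \cite{MR3513107,L2019}. So there is no in-paper argument to compare against, and your proposal must stand on its own.

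Your reduction is sound. The conjugacy of the first-level sections via the cocycle identity is correct, the identification $\Phi(G)=G'$ is right, and the dichotomy does reduce to showing that if $U=\varphi_{v_0}(\Stab_H(\level1))$ meets $G\setminus\Stab_G(\level1)$ then $U=G$. Your $h^p$ computation is also correct: writing $h=a^k\beta$ with $\beta\in\Stab_G(\level1)$, one finds $\varphi_{v_0}(h^p)\equiv\prod_i\varphi_i(\beta)\equiv b^{\beta_h}\pmod{G'}$, using the torsion condition $\sum_i e_i=0$. So whenever some $h\in H\setminus\Stab_G(\level1)$ has $\beta_h\neq0$ you obtain an element of $U\cap\Stab_G(\level1)\setminus G'$ and conclude $U=G$.

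The gap is precisely the case you flag but do not close: $HG'/G'=\gen{aG'}$, i.e.\ $H\leq\gen{a}G'$. Here \emph{every} $h\in H$ has $\beta_h=0$, your $h^p$ trick returns an element of $G'$, and nothing is gained. You then invoke Lemma~\ref{lemma:ClassificationSubgroups}, but that lemma is merely the trichotomy on $HG'/G'$; it carries no further content and cannot by itself produce the missing element of $U$ with nontrivial $b$-component, nor does it rule out that $\bar U=UG'/G'$ is a proper cyclic subgroup $\gen{ab^jG'}$. Concretely, $H=\gen{a,[a,b]}$ in the Gupta--Sidki $3$-group is of this problematic type, and while the sections do turn out to equal $G$, this is not visible from your argument. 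The proofs in \cite{MR3513107,L2019} dispose of this case by a separate explicit computation that you have not supplied; as written, the proposal correctly isolates the difficulty but leaves the actual technical heart of the lemma undone.
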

\subsection{Lengths in \GGS{} groups}
A \GGS{} group $G$ is always generated by two elements $a$ and $b$, as in Definition~\ref{def:GGSGroups}.
However, for our purposes, the word length corresponding to this generating set is not the most convenient, and we will prefer to it the word length given by the generating set consisting of all powers of $a$ and $b$.
To avoid all possible confusion, we will call the word length with this specific set of generators the \defi{total length}.
\begin{definition}\label{def:TotalLength}
Let $p$ be a prime number, let $G$ be a \GGS{} group and let $g\in G$ be any element.
The \defi{total length} of $g$, which we will denote by $\lambda(g)$, is the word length of $g$ with respect to the generating set $\{a,a^2, \dots, a^{p-1}, b, b^2, \dots, b^{p-1}\}$.
In other words, $\lambda(g)$ is the smallest $n\in \N$ such that
\[ g=s_1s_2\dots s_n\]
with $s_1,s_2,\dots, s_n \in \{a,a^2, \dots, a^{p-1}, b, b^2, \dots, b^{p-1}\}$.
\end{definition}
Since every \GGS{} group $G$ is generated by two elements $a$ and $b$, every element of $G$ can be expressed as an alternating product of powers of $a$ and powers of~$b$.
To simplify our analysis, we will often want to ignore the powers of $a$, since they have no action beyond the first level, and focus our attention only on the powers of $b$.
To this end, we introduce a ``length'' different from the total length, that we will call the \defi{$b$-length}.
\begin{definition}\label{def:BLength}
Let $p$ be a prime number, $G$ be a \GGS{} group and $g\in G$ be any element.
The \defi{$b$-length} of $g$, denoted by $\abs{g}$, is the smallest $n\in \N$ such that 
\[g=a^{i_1}b^{j_1}a^{i_2}b^{j_2}\dots a^{i_n}b^{j_n}a^{i_{n+1}}\]
with $i_1,\dots, i_{n+1}, j_1, \dots, j_n \in \Z$.
\end{definition}
\begin{remark}
Please note that despite what its name and notation might suggest, the $b$-length is not a norm in the usual sense, but merely a pseudo-norm, since non-trivial elements, namely the powers of $a$, can have $b$-length $0$.
\end{remark}
The total length and the $b$-length are related to each other, and there are relations between the length of an element and the length of its sections, as the next proposition shows.
These inequalities are all fairly standard, but we give brief proofs here for completeness.
\begin{proposition}\label{prop:LengthReductions}
Let $p$ be a prime number, $G$ be a \GGS{} group and $g\in G$ be any element of $G$, and let $v\in \level{1}$ be any vertex on the first level.
We have
\begin{enumerate}[(i)]
\item $2\abs{g}-1 \leq \lambda(g) \leq 2\abs{g}+1$,\label{item:RelBTotal}
\item $\lambda(\varphi_v(g))\leq \abs{g}$,\label{item:SectionTotalAndBLength}
\item $\lambda(\varphi_v(g))\leq \frac{\lambda(g)+1}{2}$,\label{item:SectionTotalLength}
\item $\abs{\varphi_v(g)} \leq \frac{\abs{g}+1}{2}$,\label{item:SectionBLength}
\item $\sum_{w\in \level{1}} \abs{\varphi_w(g)} \leq \abs{g}$.\label{item:SumOfSectionBLength}
\end{enumerate}
\end{proposition}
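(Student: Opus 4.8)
The plan is to reduce everything to two elementary facts about the generators together with the combinatorial meaning of the two lengths. Recall that the section map is a cocycle, $\varphi_v(gh)=\varphi_{h\cdot v}(g)\,\varphi_v(h)$, that $a$ acts trivially below the first level so $\varphi_w(a)=1$ for every $w\in\level{1}$, and that, labelling the first level by $\{1,\dots,p\}$, we have $\varphi_w(b)=a^{e_{w-1}}$ for $w\le p-1$ and $\varphi_p(b)=b$; since $b\in\Stab_G(\level 1)$ this gives $\varphi_w(b^{j})=a^{je_{w-1}}$ for $w\le p-1$ and $\varphi_p(b^{j})=b^{j}$. These are the only inputs beyond bookkeeping.

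For (\ref{item:RelBTotal}) I would argue purely combinatorially. A $b$-length expression $g=a^{i_1}b^{j_1}\cdots a^{i_n}b^{j_n}a^{i_{n+1}}$ with $n=\abs g$ exhibits $g$ as a product of at most $2n+1$ of the generators $a^{i},b^{j}$, whence $\lambda(g)\le 2\abs g+1$. Conversely, in a total-length geodesic no two consecutive letters can be powers of the same generator, since $a$ and $b$ each generate a cyclic group and such a pair would combine; hence a geodesic alternates between powers of $a$ and powers of $b$. Reading it off as an alternating product shows $\abs g\le\lceil\lambda(g)/2\rceil\le(\lambda(g)+1)/2$, i.e. $2\abs g-1\le\lambda(g)$.

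For (\ref{item:SectionTotalAndBLength}), I fix $v\in\level 1$ and expand $\varphi_v(g)$ along a $b$-length geodesic using the cocycle identity. Each factor $a^{i_k}$ contributes a trivial section, while each factor $b^{j_k}$ contributes exactly one section, which is a single power of $a$ or of $b$ according to whether the vertex it sees equals $p$. Thus $\varphi_v(g)$ is a product of at most $n=\abs g$ generators, giving $\lambda(\varphi_v(g))\le\abs g$. Statement (\ref{item:SectionTotalLength}) is then immediate from (\ref{item:SectionTotalAndBLength}) and the lower bound in (\ref{item:RelBTotal}): $\lambda(\varphi_v(g))\le\abs g\le(\lambda(g)+1)/2$. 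The substance lies in (\ref{item:SectionBLength}) and (\ref{item:SumOfSectionBLength}), which require tracking which $b$-factor contributes a genuine power of $b$. Expanding $\varphi_v(g)=c_1\cdots c_n$ as above, $c_k$ is a power of $b$ precisely when the vertex $u_k$ seen by $b^{j_k}$ equals $p$, and a power of $a$ otherwise; since powers of $a$ contribute nothing to $b$-length, $\abs{\varphi_v(g)}$ is at most the number $N_v$ of indices $k$ with $u_k=p$. Because $b$-factors fix the first level, $u_k$ depends on $v$ only through the fixed level-one permutation coming from the $a$-part of the suffix, so for each fixed $k$ exactly one value of $v$ gives $u_k=p$. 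Summing over $v$ yields $\sum_{w\in\level 1}N_w=n=\abs g$, which proves (\ref{item:SumOfSectionBLength}).

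Finally, (\ref{item:SectionBLength}) is where the factor of two must be gained, and this is the step I expect to be the main obstacle. The key refinement is that in a $b$-length geodesic every intermediate power $a^{i_2},\dots,a^{i_n}$ is nontrivial: a trivial one would let $b^{j_k}b^{j_{k+1}}$ merge into a single $b$-block, contradicting minimality of $\abs g$. A nontrivial intermediate $a$-power shifts the active vertex cyclically, so $u_k=a^{i_{k+1}}\cdot u_{k+1}\ne u_{k+1}$ for every $k$; hence the indices $k$ with $u_k=p$ contain no two consecutive elements of $\{1,\dots,n\}$ and so number at most $\lceil n/2\rceil$. Therefore $\abs{\varphi_v(g)}\le N_v\le\lceil\abs g/2\rceil\le(\abs g+1)/2$. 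The only care needed is the bookkeeping of the vertices $u_k$ under the cocycle and the observation that $a$-powers produce no $b$-length, both routine once the generator sections above are fixed.
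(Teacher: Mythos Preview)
Your proof is correct. For (\ref{item:RelBTotal})--(\ref{item:SectionTotalLength}) your argument is essentially the paper's, just with the induction in (\ref{item:SectionTotalAndBLength}) unrolled into an explicit product via the cocycle identity. The real divergence is in (\ref{item:SectionBLength}) and (\ref{item:SumOfSectionBLength}). The paper does not argue either of these directly: it obtains (\ref{item:SectionBLength}) in one line by chaining the inequalities $2\abs{\varphi_v(g)}-1\le\lambda(\varphi_v(g))\le\abs g$ from (\ref{item:RelBTotal}) and (\ref{item:SectionTotalAndBLength}), and it proves (\ref{item:SumOfSectionBLength}) by a short induction on $\abs g$ using subadditivity together with the identities $\sum_w\abs{\varphi_w(a^i)}=0$ and $\sum_w\abs{\varphi_w(b^j)}=1$. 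Your route instead tracks the sequence of vertices $u_k$ seen by the successive $b$-blocks: the observation that intermediate $a$-powers in a $b$-length geodesic are nontrivial forces $u_k\ne u_{k+1}$, so the indices with $u_k=p$ form a consecutive-free set of size at most $\lceil\abs g/2\rceil$; and for (\ref{item:SumOfSectionBLength}) the bijectivity of each map $v\mapsto u_k$ gives $\sum_w N_w=\abs g$ by double counting. The paper's derivation is shorter and avoids setting up the $u_k$ bookkeeping, while your argument is more transparent about where the factor of two actually comes from and in fact yields the slightly sharper statement that the $b$-contributions to a given section are non-adjacent in the geodesic.
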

\begin{proof}
\noindent(\ref{item:RelBTotal}) Let us write $m=\lambda(g)$, and let $s_1,\dots, s_m\in \{a,\dots, a^{p-1}, b, \dots, b^{p-1}\}$ be such that $g=s_1\dots s_m$.
By the minimality of $m$, if $s_i$ is a power of $b$, then $s_{i+1}$, if it exists, must be a power of $a$.
Consequently, $g$ can be written as an alternating product of powers of $a$ and powers of $b$ with at most $\frac{m+1}{2}$ powers of~$b$.
The first inequality thus holds.

For the second inequality, let us write $n=\abs{g}$, and let $i_1,\dots, i_{n+1}, j_1, \dots j_n \in \Z$ be such that
\[g=a^{i_1}b^{j_1}a^{i_2}b^{j_2}\dots a^{i_n}b^{j_n}a^{i_{n+1}}.\]
As $g$ can be written as a product of at most $2n+1$ elements belonging to the set $\{a,\dots, a^{p-1}, b, \dots, b^{p-1}\}$, we must have $\lambda(g)\leq 2n+1$.

\noindent(\ref{item:SectionTotalAndBLength}) We will proceed by induction on $n=\abs{g}$.
If $n=0$, then $g=a^j$ for some $j\in \Z$.
In particular, for any $v\in \level{1}$, we have $\varphi_v(g)=1$, so the result holds in this case.
Now, let us assume that it holds for a given $n\in \N$.
Then, if $\abs{g}=n+1$, we have $g=g'b^ia^j$ for some $g'\in G$ with $\abs{g'}=n$ and some $i,j\in \Z$.
Since we have
\[\varphi_v(g) = \varphi_v(g'b^ia^j) = \varphi_{a^j\cdot v}(g')\varphi_{a^j\cdot v}(b^i)\]
for any $v\in \level{1}$, the result follows from the induction hypothesis, the fact that $\lambda(\varphi_w(b^{i}))\leq 1$ for all $w\in \level{1}$ and subadditivity of the total length.

\noindent(\ref{item:SectionTotalLength}) and (\ref{item:SectionBLength})  follow directly from (\ref{item:RelBTotal}) and (\ref{item:SectionTotalAndBLength}).

\noindent(\ref{item:SumOfSectionBLength}) This follows by induction on $n=\abs{g}$, using the fact that if $g=g_1g_2$, then
\[\sum_{w\in \level{1}}\abs{\varphi_w(g)} \leq \sum_{w\in\level{1}}\abs{\varphi_w(g_1)} + \sum_{w\in\level{1}}\abs{\varphi_w(g_2)},\]
and that we have $\sum_{w\in\level{1}}\abs{\varphi_w(a^i)}=0$ and $\sum_{w\in\level{1}}\abs{\varphi_w(b^i)}=1$ for all $i\in \{1,2,\dots, p-1\}$.
\end{proof}
\subsection{Proof of the subgroup induction property for torsion \GGS{} groups}
We will now prove that all torsion \GGS{} groups possess the subgroup induction property.
 
Let us begin by fixing some notation.
For the rest of this section, $p$ will denote an odd prime and $G$ will denote a torsion \GGS{} group with defining vector $\mathbf{e}=(e_0,\dots,e_{p-2})$ acting on the $p$-regular rooted tree $T$.
\begin{notation}
For any $g\in G$, we will denote by $\alpha_g$ and $\beta_g$ the unique elements of $\{0,1,\dots, p-1\}$ such that $g\equiv_{G'} a^{\alpha_g}b^{\beta_g}$.
Note that the existence of such elements follows from point (\ref{item:Abelianization}) of Proposition~\ref{Prop:Derived}.
\end{notation}
The subgroups of a torsion \GGS{} group $G$ can be classified into three broad classes, as the following lemma shows.
\begin{lemma}\label{lemma:ClassificationSubgroups}
Let $H\leq G$ be a subgroup.
Then, either
\begin{enumerate}[(I)]
\item $H=G$,\label{item:H=G}
\item there exists $j_H\in \{0,1\}$ such that $HG'/G' = \gen{b^{j_H}} G'/G'$,\label{item:HInStab}
\item there exists $j_H\in \{0,1,\dots, p-1\}$ such that $HG'/G'=\gen{ ab^{j_H}} G'/G'$.\label{item:HNotInStab}
\end{enumerate}
\end{lemma}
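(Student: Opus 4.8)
The plan is to study $H$ entirely through its image in the abelianization $G/G'$. By Proposition~\ref{Prop:Derived}(\ref{item:Abelianization}), we have $G/G'=\gen{aG',bG'}\cong C_p\times C_p$, which I will regard as a two-dimensional vector space over $\mathbf{F}_p$ with basis $\{aG',bG'\}$. For any subgroup $H\leq G$, the image $HG'/G'$ is a subgroup of $G/G'$, hence an $\mathbf{F}_p$-subspace, and the whole argument amounts to splitting according to its dimension, which can only be $0$, $1$ or $2$. The three cases of the lemma should correspond exactly to this trichotomy, the only nontrivial point being that a full image forces $H=G$.

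The dimension $0$ and dimension $1$ cases are pure linear algebra. If $\dim(HG'/G')=0$, then $HG'/G'=\gen{b^0}G'/G'$, which is case~(\ref{item:HInStab}) with $j_H=0$. If $\dim(HG'/G')=1$, the image is a line; choosing a spanning vector and writing it as $\alpha\,aG'+\beta\,bG'$, either $\alpha=0$, in which case the line is $\gen{bG'}=\gen{b}G'/G'$, giving case~(\ref{item:HInStab}) with $j_H=1$, or $\alpha\neq0$, in which case after rescaling the spanning vector has the form $aG'+j_H\,bG'$ for a unique $j_H\in\{0,1,\dots,p-1\}$, so that $HG'/G'=\gen{ab^{j_H}}G'/G'$, which is case~(\ref{item:HNotInStab}). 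A quick count confirms consistency: this exhausts all $p+1$ lines of $\mathbf{F}_p^2$, together with the zero subspace and the whole space.

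The only substantive step, and the one I expect to be the main obstacle, is showing that $\dim(HG'/G')=2$ forces $H=G$ (so that the top-dimensional case is precisely case~(\ref{item:H=G})). Here $HG'=G$, and I would argue as follows: since $G$ is finitely generated, every proper subgroup is contained in a maximal one, so if $H$ were proper it would lie in some maximal subgroup $M$. But every maximal subgroup of a torsion \GGS{} group is normal of index $p$, and therefore contains $G'$, as recalled before Proposition~\ref{Prop:Derived}; this is exactly where the nontrivial input enters, namely the theorem of Pervova (extended by the first named author and Thillaisundaram) that these groups have no maximal subgroups of infinite index. Then $M\supseteq HG'=G$, a contradiction, so $H=G$. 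Equivalently, this is just the non-generator property of the Frattini subgroup, since $G'\leq\Phi(G)$. Everything outside this one point reduces to the classification of subspaces of a plane over $\mathbf{F}_p$, so the proof should be short once this fact about maximal subgroups is invoked.
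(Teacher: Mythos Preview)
Your proposal is correct and follows essentially the same route as the paper: both arguments reduce to the observation that $G'\leq\Phi(G)$ (via Pervova's theorem that maximal subgroups have finite index, as recalled before Proposition~\ref{Prop:Derived}), so that $HG'=G$ forces $H=G$, and then classify the proper subgroups of $G/G'\cong C_p\times C_p$. Your dimension-by-dimension analysis is just a slightly more explicit rendering of the paper's remark that the proper subgroups of $C_p\times C_p$ are cyclic and are exhausted by cases~(\ref{item:HInStab}) and~(\ref{item:HNotInStab}).
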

\begin{proof}
Since all maximal subgroups of $G$ are of finite index~\cite{MR2197824}, the Frattini subgroup $\Phi(G)$ is equal to $\Phi_F(G)$, see the discussion before Lemma~\ref{Lemma:DerivedSubgroup}.
On the other hand, since $G$ is a $p$-group, all its finite quotients are finite $p$-groups and hence nilpotent.
Therefore, by Lemma~\ref{Lemma:DerivedSubgroup} the derived subgroup $G'$ is contained in $\Phi_F(G)=\Phi(G)$.
This implies that $HG'$ is a proper subgroup of $G$ if and only if $H$ is proper.
Therefore, $HG'/G'$ is a proper subgroup of $G/G'$ if and only if $H$ is a proper subgroup of $G$.
As $G/G'$ is isomorphic to $C_p\times C_p$, its proper subgroups are all cyclic.
Thus, either $H=G$, or $HG'/G'$ is a cyclic subgroup of $G/G'$.
Cases II and III together cover all the possible cyclic subgroups of $G/G'$ and are mutually exclusive.
\end{proof}
In what follows, we will frequently need to study the first-level stabiliser of a given subgroup.
The following lemma gives us a convenient generating set for this stabilizer when the subgroup is of type~\ref{item:HNotInStab}.
\begin{lemma}\label{lemma:GenSetOfStab}
Let $H\leq G$ be a subgroup of type~\ref{item:HNotInStab} (according to the classification of Lemma~\ref{lemma:ClassificationSubgroups}) generated by a set $S\subseteq H$.
Then, for any $x\in S$ with $\alpha_x\ne 0$, the set
\[
	S'=\setst{x^{k_1}yx^{k_2}}{ y\in S, k_1,k_2\in \{0,1,\dots, p-1\}, (k_1+k_2)\alpha_x+\alpha_y \equiv 0 \mod p}
\]
generates $\Stab_H(\level1)$.
\end{lemma}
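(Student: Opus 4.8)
The plan is to identify $\Stab_H(\level1)$ as the kernel of a homomorphism onto $\mathbf{F}_p$ and then produce an explicit generating set via the Reidemeister--Schreier method, checking that every resulting Schreier generator lies in $\gen{S'}$.

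First I would observe that $g\mapsto \alpha_g$ defines a homomorphism $\alpha\colon G\to\mathbf{F}_p$, namely the composite of the abelianisation $G\to G/G'\cong C_p\times C_p$ (point (iii) of Proposition~\ref{Prop:Derived}) with the projection onto the factor generated by $aG'$. Since $b\in\Stab_G(\level1)$ while $a$ acts on $\level1$ as the $p$-cycle $\varepsilon$, both $\Stab_G(\level1)$ and $\ker\alpha$ are normal of index $p$ with $\Stab_G(\level1)\subseteq\ker\alpha$, so they coincide. Consequently $\Stab_H(\level1)=H\cap\Stab_G(\level1)=\ker(\alpha|_H)$. As $H$ is of type~\ref{item:HNotInStab}, the image $\alpha(H)$ is all of $\mathbf{F}_p$, and for the chosen $x$ with $\alpha_x\ne 0$ one has $\alpha(x^k)=k\alpha_x$, which runs over all of $\mathbf{F}_p$ as $k$ ranges over $\{0,\dots,p-1\}$ because $\alpha_x$ is a unit. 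Hence $[H:\Stab_H(\level1)]=p$ and $T=\{1,x,\dots,x^{p-1}\}$ is a Schreier transversal for $\Stab_H(\level1)$ in $H$.

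The inclusion $\gen{S'}\subseteq\Stab_H(\level1)$ is immediate: each element $x^{k_1}yx^{k_2}$ of $S'$ satisfies $\alpha\bigl(x^{k_1}yx^{k_2}\bigr)=k_1\alpha_x+\alpha_y+k_2\alpha_x\equiv 0\pmod p$, so it lies in $\ker(\alpha|_H)=\Stab_H(\level1)$. I would also record the special case $y=x$: the defining congruence becomes $(k_1+1+k_2)\alpha_x\equiv 0$, forcing $k_1+k_2=p-1$, so that $x^{k_1}\cdot x\cdot x^{k_2}=x^p\in S'$. For the reverse inclusion I would invoke the Reidemeister--Schreier procedure with the transversal $T$: the group $\Stab_H(\level1)$ is generated by the Schreier generators $\gamma(x^{k_1},y)=x^{k_1}y\,\bigl(\overline{x^{k_1}y}\bigr)^{-1}$, where $k_1\in\{0,\dots,p-1\}$, $y\in S$, and the representative $\overline{x^{k_1}y}=x^{m}$ is determined by $m\equiv k_1+\alpha_y\alpha_x^{-1}\pmod p$. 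Setting $k_2\in\{0,\dots,p-1\}$ with $k_2\equiv -m$, a short computation gives $\gamma(x^{k_1},y)=\bigl(x^{k_1}yx^{k_2}\bigr)(x^p)^{-c}$ for some $c\in\{0,1\}$, where $x^{k_1}yx^{k_2}\in S'$ by the congruence check above. Since $x^p\in S'\subseteq\gen{S'}$, each Schreier generator lies in $\gen{S'}$, whence $\Stab_H(\level1)\subseteq\gen{S'}$, and the two inclusions yield equality.

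The main obstacle is exactly the correction term $(x^p)^{-c}$. Because $G$ is only assumed torsion, the order of $x$ is a power of $p$ that may well exceed $p$, so $x^{-m}$ cannot simply be rewritten as $x^{p-m}$, and the Schreier generators are therefore not literally members of $S'$. The observation that rescues the argument is that $x^p$ itself belongs to $S'$, which lets one absorb this discrepancy inside $\gen{S'}$; I expect the verification of this fact together with the bookkeeping of the exponents modulo $p$ to be the only genuinely delicate part of the proof.
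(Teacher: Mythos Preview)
Your proof is correct and follows essentially the same route as the paper's: both use the Schreier transversal $\{1,x,\dots,x^{p-1}\}$ for $\Stab_H(\level1)$, obtain the Reidemeister--Schreier generators $x^{k_1}yx^{-m}$, and then pass to $S'$ by absorbing the discrepancy between $x^{-m}$ and $x^{k_2}$ into a power of $x^p\in S'$. The paper phrases the last step as transforming the generating set $S''=\{x^{k_1}yx^{-k_2}\}$ into $S'$ by right-multiplying by $x^p$ when $k_2\neq 0$, whereas you express each Schreier generator as $(x^{k_1}yx^{k_2})(x^p)^{-c}$; these are the same computation.
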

\begin{proof}
Let $x\in S$ be any element of $S$ such that $\alpha_x\ne 0$.
Then, since $H$ is of type~\ref{item:HNotInStab}, the set $\setst{x^{k}}{0\leq k \leq p-1}$ is a Schreier transversal for $\Stab_H(\level1)$.
Therefore, by the Reidemeister-Schreier method (see for example~\cite{MR2109550}), the set
\[
	S''=\setst{x^{k_1}yx^{-k_2}}{y\in S, k_1,k_2\in \{0,\dots, p-1\}, k_1\alpha_x+\alpha_y-k_2\alpha_x \equiv 0 \mod p}
\]
generates $\Stab_H(\level1)$.
Taking $k_1=p-1$ and $y=x$, which forces $k_2=0$, we see that $x^p\in S''$.
Thus, multiplying elements of $S''$ on the right by $x^p$ when $k_2$ is not zero, we can transform the generating set $S''$ into the set $S'$, which must consequently also generate $\Stab_H(\level1)$.
\end{proof}
The main idea behind the proof of the subgroup induction property for torsion \GGS{} groups is to use an argument of length reduction on the generators of finitely generated subgroups to reduce the question to subgroups generated by elements of length at most $1$.
We will establish this length reduction property in the next several lemmas.
While they are unfortunately rather technical at times, they ultimately make the proof of the main result very short and simple.
\begin{lemma}\label{lemma:LengthOfProjectionsOfPowers}
Let $x,y\in G$ be two elements, and suppose that $\alpha_x\ne 0$.
Let $k_1,k_2\in \{0,1,\dots, p-1\}$ be such that $(k_1+k_2)\alpha_x+\alpha_y \equiv 0 \mod p$.
Then, for every $v\in \level{1}$, there exist $l_1\in \{k_1, k_1-p\}$ and $l_2\in \{k_2, k_2-p\}$ such that
\[\abs{\varphi_v(x^{l_1}yx^{l_2})} \leq \frac{\abs{y}+1}{2} + \abs{x}.\]
Furthermore, this inequality is strict if $\abs{x}$ is odd.
\end{lemma}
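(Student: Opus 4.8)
The plan is to estimate the $b$-length of $\varphi_v(x^{l_1}yx^{l_2})$ directly from the generalized length-reduction bounds of Proposition~\ref{prop:LengthReductions}, choosing the signs of $l_1,l_2$ carefully so that the exponents of $x$ that land at the vertex $v$ cooperate. The key observation is that the hypothesis $k_1\alpha_x+\alpha_y+k_2\alpha_x\equiv 0\pmod p$ guarantees that $x^{k_1}yx^{k_2}$ fixes the first level (it lies in $\Stab_G(\level1)$, since its $a$-exponent modulo $p$ is zero), and replacing $k_i$ by $k_i-p$ does not change this, as $x^p\in\Stab_G(\level1)$.

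First I would write $g\coloneqq x^{l_1}yx^{l_2}$ and apply the subadditivity underlying part~(\ref{item:SectionBLength}) of Proposition~\ref{prop:LengthReductions}, together with the fact that $\varphi_v$ is a homomorphism on $\Stab_G(\level1)$, to split
\[
\abs{\varphi_v(g)} \leq \abs{\varphi_v(x^{l_1})} + \abs{\varphi_{v'}(y)} + \abs{\varphi_{v''}(x^{l_2})}
\]
for appropriate first-level vertices $v',v''$ determined by the action of the relevant powers of $x$ (the intermediate vertices one reaches after applying the rightmost factors). The middle term is bounded by $\frac{\abs{y}+1}{2}$ via part~(\ref{item:SectionBLength}). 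The heart of the matter is therefore to control the two contributions coming from the powers of $x$: I want to show that, by the freedom to choose each $l_i\in\{k_i,k_i-p\}$, the total contribution $\abs{\varphi_v(x^{l_1})}+\abs{\varphi_{v''}(x^{l_2})}$ can be forced to be at most $\abs{x}$.

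The mechanism I expect to exploit is part~(\ref{item:SumOfSectionBLength}), $\sum_{w\in\level1}\abs{\varphi_w(x)}\leq\abs{x}$: a single power $x^{l}$ with $l$ ranging over a full residue class behaves, after projecting to a vertex, like summing the $b$-lengths of $x$ over a block of the $\langle a\rangle$-orbit, and the two choices $l\in\{k,k-p\}$ let me arrange that the exponents of $x$ projecting nontrivially to $v$ are disjoint across the left and right factors, so their $b$-length contributions add up to at most one full pass $\sum_{w}\abs{\varphi_w(x)}\leq\abs{x}$ rather than two. Concretely, since $x^p$ acts trivially on the first level but has a prescribed section, shifting $l_i$ by $p$ shifts which copies of $x$ get ``absorbed'' at $v$; choosing the shifts so the absorbed copies tile a single period of length $p$ yields the clean bound $\abs{x}$ for the combined $x$-contribution. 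Adding the three pieces gives $\abs{\varphi_v(g)}\leq\frac{\abs{y}+1}{2}+\abs{x}$.

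The strict inequality when $\abs{x}$ is odd I would extract from the same bookkeeping: the bound $\sum_{w}\abs{\varphi_w(x)}\leq\abs{x}$ from~(\ref{item:SumOfSectionBLength}) is typically not tight, and more importantly the inequality $\abs{\varphi_v(x)}\leq\frac{\abs{x}+1}{2}$ from~(\ref{item:SectionBLength}), when iterated over the contribution of a power, loses a half-unit whenever the parity is odd, since the $+1$ in $\frac{\abs{x}+1}{2}$ cannot be saturated simultaneously across the disjoint blocks. The main obstacle will be the careful combinatorial accounting of exactly which exponents of $x$ survive at the vertex $v$ after the cyclic permutation induced by the leading $a$-parts of $y$ and of the $x$-powers, and verifying that the sign choices $l_i\in\{k_i,k_i-p\}$ can always be made to route the two $x$-contributions onto disjoint residues; this is where the odd-$p$ structure of the $\langle a\rangle$-action and the relation $k_1\alpha_x+\alpha_y+k_2\alpha_x\equiv 0$ must be used in tandem to guarantee the tiling works and yields the strict gain in the odd case.
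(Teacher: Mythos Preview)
Your three-term decomposition and the bound $\frac{|y|+1}{2}$ for the middle term are exactly what the paper does. The difference lies in how you bound the two $x$-contributions, and this is where your proposal has a real gap. You want to \emph{couple} the choices of $l_1$ and $l_2$ so that the copies of $x$ landing at the relevant vertices from the left and right factors ``tile a single period'', giving a combined bound $\leq|x|$. With some work this tiling can be made to succeed for the main inequality (using the congruence to locate the left-hand vertex as $w_1\equiv v-k_1\alpha_x$), but your argument for the \emph{strict} inequality when $|x|$ is odd does not go through: a successful tiling only yields $\sum_i|x_i|\leq|x|$, with no mechanism for strictness, and item~(\ref{item:SectionBLength}) of Proposition~\ref{prop:LengthReductions} concerns a single $x$ rather than a power $x^{l}$, so the ``half-unit loss from parity'' you invoke is not available here.

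The paper sidesteps all of this with a simpler observation that decouples the two factors. Writing $x=a^{\alpha_x}(x_1,\dots,x_p)$, one computes that for any fixed vertex $w$ and any $k\in\{0,\dots,p-1\}$ the sections $\varphi_w(x^{k})$ and $\varphi_w(x^{k-p})$ are products over \emph{complementary} subsets of the $x_i$ (and their inverses), so
\[
|\varphi_w(x^{k})|+|\varphi_w(x^{k-p})|\;\leq\;\sum_{i=1}^{p}|x_i|\;\leq\;|x|.
\]
Hence at the relevant vertex one may pick each $l_i\in\{k_i,k_i-p\}$ \emph{independently} so that the corresponding term is at most $|x|/2$; the vertex $w_1$ for the left factor does not depend on the choice of $l_2$, since $x^p\in\Stab_G(\level1)$. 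When $|x|$ is odd, integrality of the $b$-length forces each of these terms to be at most $\frac{|x|-1}{2}$, and the strict inequality follows immediately. Note in particular that the congruence $k_1\alpha_x+\alpha_y+k_2\alpha_x\equiv 0$ plays no role whatsoever in the length estimate; your expectation that it would be needed ``in tandem'' is a symptom of the unnecessary coupling.
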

\begin{proof}
It suffices to show that for every $k\in \{0,1,\dots, p-1\}$ and for every $w\in \level{1}$, we have $\abs{\varphi_w(x^{k})}+\abs{\varphi_w(x^{k-p})}\leq \abs{x}$.
Indeed, we have
\[\abs{\varphi_v(x^{l_1}yx^{l_2})}\leq \abs{\varphi_{w_1}(x^{l_1})}+\abs{\varphi_{w_2}(y)}+\abs{\varphi_{v}(x^{l_2})},\]
where $w_1=yx^{l_2}\cdot v$ and $w_2=x^{l_2}\cdot v$.
Therefore, if $\abs{\varphi_w(x^{k})}+\abs{\varphi_w(x^{k-p})}\leq \abs{x}$, one can choose $l_1\in \{k_1, k_1-p\}$ and $l_2\in \{k_2, k_2-p\}$ such that $\abs{\varphi_{w_1}(x^{l_1})}, \abs{\varphi_v(x^{l_2})}\leq \frac{\abs{x}}{2}$.
Note that if $\abs{x}$ is odd, we have $\abs{\varphi_{w_1}(x^{l_1})}, \abs{\varphi_v(x^{l_2})}< \frac{\abs{x}}{2}$, since the $b$-length of elements must always be an integer.
The result then immediately follows from the fact that $\abs{\varphi_{w_2}(y)}\leq \frac{\abs{y}+1}{2}$ for all $w_2\in \level{1}$ by Proposition~\ref{prop:LengthReductions} (\ref{item:SectionBLength}).

To conclude the proof, it thus remains only to show that
\[\abs{\varphi_w(x^{k})}+\abs{\varphi_w(x^{k-p})}\leq \abs{x}\] for all $k\in \{0,1,\dots, p-1\}$ and for all $w\in \level{1}$.
Let us write $x=a^{\alpha_x}(x_1, x_2, \dots, x_p)$.
By direct computation, we find that
\[\varphi_w(x^k) = x_{w+(k-1)\alpha_x} x_{w+(k-2)\alpha_x} \dots x_{w+\alpha_x}x_{w}\]
(with the exceptional case of $\varphi_w(x^0)=1$ and $\abs{\varphi_w(x^0)}=0$) and that
\[\varphi_w(x^{k-p})=x^{-1}_{w+k\alpha_x} x^{-1}_{w+(k+1)\alpha_x}\dots x^{-1}_{w+(p-1)\alpha_x}.\]
Therefore,
\begin{align*}
\abs{\varphi_w(x^k)} + \abs{\varphi_w(x^{k-p})} &\leq \sum_{i=0}^{k-1}\abs{x_{w+i\alpha_x}} + \sum_{i=k}^{p-1}\abs{x^{-1}_{w+i\alpha_x}} \\
& = \sum_{i=1}^{p} \abs{x_i} \\
&\leq \abs{x},
\end{align*}
where we used Proposition~\ref{prop:LengthReductions} (\ref{item:SumOfSectionBLength}) and the easily established fact that $\abs{g^{-1}}=\abs{g}$ for all $g\in G$.
This concludes the proof.
\end{proof}
\begin{lemma}\label{lemma:LengthReductionTypeII}
Let $H < G$ be a subgroup of type~\ref{item:HInStab} (according to the classification of Lemma~\ref{lemma:ClassificationSubgroups}) generated by a finite set $S$, and let $M$ be the maximal $b$-length of elements in $S$.
Then for all $v\in \level{1}$, the subgroup $\varphi_v(H)$ is generated by the set $S_v:=\varphi_v(S)$, whose elements are of $b$-length at most $\frac{M+1}{2}$.
In particular, if $M>1$, the elements of $S_v$ are all of length strictly less than $M$.
\end{lemma}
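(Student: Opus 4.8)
The plan is to reduce everything to two facts: that a subgroup of type~\ref{item:HInStab} is contained in $\Stab_G(\level 1)$, and that the section bound of Proposition~\ref{prop:LengthReductions}~(\ref{item:SectionBLength}) controls how $b$-length grows under taking sections. First I would unwind the definition of type~\ref{item:HInStab}: since $HG'/G'=\gen{b^{j_H}}G'/G'$ with $j_H\in\{0,1\}$, the image of $H$ in $G/G'\cong C_p\times C_p$ is contained in $\gen{bG'}$, which in terms of the notation $g\equiv_{G'}a^{\alpha_g}b^{\beta_g}$ means $\alpha_h=0$ for every $h\in H$. As $\alpha_h$ records the action of $h$ on the first level, this forces $H\leq\Stab_G(\level 1)$.

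Once $H\leq\Stab_G(\level 1)$, for any $v\in\level 1$ we have $H\leq\Stab_G(v)$, so the section map $\varphi_v$ restricts to a group homomorphism on $H$. A homomorphism sends a generating set to a generating set, so $\varphi_v(H)=\varphi_v(\gen{S})=\gen{\varphi_v(S)}=\gen{S_v}$, which is the first assertion of the lemma.

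For the length bound I would simply apply Proposition~\ref{prop:LengthReductions}~(\ref{item:SectionBLength}) to each $s\in S$: by the definition of $M$ we have $\abs{s}\leq M$, hence $\abs{\varphi_v(s)}\leq\frac{\abs{s}+1}{2}\leq\frac{M+1}{2}$ for every $v\in\level 1$. This shows that every element of $S_v$ has $b$-length at most $\frac{M+1}{2}$. For the last claim, note that $M>1$ gives $\frac{M+1}{2}<M$, so the inequality $\abs{\varphi_v(s)}\leq\frac{M+1}{2}$ already yields $\abs{\varphi_v(s)}<M$ strictly (and integrality of $b$-length makes this even clearer).

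The only point requiring genuine care is the first step, namely the identification of type~\ref{item:HInStab} with membership in $\Stab_G(\level 1)$, since it is what licenses treating $\varphi_v$ as a homomorphism on all of $H$. Beyond that there is no real obstacle: the statement is essentially a bookkeeping consequence of the section-length inequality already established in Proposition~\ref{prop:LengthReductions}, applied generator by generator.
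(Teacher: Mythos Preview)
Your argument is correct and follows the same approach as the paper: type~\ref{item:HInStab} forces $H\leq\Stab_G(\level 1)$, so $\varphi_v$ is a homomorphism on $H$ and carries the generating set $S$ to a generating set, after which Proposition~\ref{prop:LengthReductions}~(\ref{item:SectionBLength}) gives the bound. You have simply spelled out in detail (via $\alpha_h=0$) why type~\ref{item:HInStab} implies $H\leq\Stab_G(\level 1)$, which the paper asserts without comment.
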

\begin{proof}
Since subgroups of type~\ref{item:HInStab} are contained in the stabiliser of the first level, $\varphi_v(H)$ is generated by $\varphi_v(S)$.
The inequality on the $b$-length is then given by Proposition~\ref{prop:LengthReductions} (\ref{item:SectionBLength}).
\end{proof}
\begin{lemma}\label{lemma:LenghtReductionTypeIII}
Let $H < G$ be a finitely generated subgroup of type~\ref{item:HNotInStab} (according to the classification of Lemma~\ref{lemma:ClassificationSubgroups}). If $H$ is generated by elements of $b$-length at most $M$, then for every $v\in \level{1}$, the subgroup $\varphi_v(\Stab_H(v))$ is generated by elements of $b$-length at most $\frac{3M}{2}$.
Furthermore, if there exists $x\in H$ with $\alpha_x\ne 0$ and $\abs{x}<M$, then $\varphi_v(\Stab_H(v))$ is generated by elements of $b$-length at most $\frac{3M-1}{2}$.
\end{lemma}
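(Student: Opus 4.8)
The plan is to write down an explicit generating set for $\Stab_H(v)$ and then bound the $b$-length of the sections of its members by means of Lemma~\ref{lemma:LengthOfProjectionsOfPowers}. First I would observe that, as $G$ is a \GGS{} group, one has $\Stab_G(v)=\Stab_G(\level1)$ for every $v\in\level1$, so that $\Stab_H(v)=H\cap\Stab_G(v)=H\cap\Stab_G(\level1)=\Stab_H(\level1)$. In particular every element of $\Stab_H(\level1)$ fixes $v$, hence $\varphi_v$ restricts to a homomorphism on it, and it is enough to bound a generating set of $\varphi_v(\Stab_H(\level1))$. Fix a finite generating set $S$ of $H$ whose elements have $b$-length at most $M$. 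Since $H$ is of type~\ref{item:HNotInStab} it is not contained in $\Stab_G(\level1)$, so some $x\in S$ satisfies $\alpha_x\neq0$; Lemma~\ref{lemma:GenSetOfStab} then provides the generating set $\setst{x^{k_1}yx^{k_2}}{y\in S,\ k_1,k_2\in\{0,\dots,p-1\},\ k_1\alpha_x+\alpha_y+k_2\alpha_x\equiv0\pmod p}$ of $\Stab_H(\level1)$, among whose elements $x^p$ appears (take $y=x$, $k_1=p-1$, $k_2=0$).

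For each such generator I would apply Lemma~\ref{lemma:LengthOfProjectionsOfPowers} to obtain exponents $l_1\in\{k_1,k_1-p\}$ and $l_2\in\{k_2,k_2-p\}$ with $\abs{\varphi_v(x^{l_1}yx^{l_2})}\leq\frac{\abs y+1}{2}+\abs x$, the inequality being strict when $\abs x$ is odd. As $l_i\equiv k_i\pmod p$, the reduced element $x^{l_1}yx^{l_2}$ still lies in $\Stab_H(\level1)$, and the identity $x^{k_1}yx^{k_2}=x^{k_1-l_1}(x^{l_1}yx^{l_2})x^{k_2-l_2}$, in which $k_i-l_i\in\{0,p\}$, shows that $\Stab_H(\level1)$ is generated by the reduced elements together with $x^p$. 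Applying $\varphi_v$ I would conclude that $\varphi_v(\Stab_H(v))$ is generated by the sections $\varphi_v(x^{l_1}yx^{l_2})$, of $b$-length at most $\frac{\abs y+1}{2}+\abs x\leq\frac{M+1}{2}+\abs x$, together with $\varphi_v(x^p)$, whose $b$-length is at most $\abs x\leq M$ (take $k=0$ in the estimate $\abs{\varphi_w(x^k)}+\abs{\varphi_w(x^{k-p})}\leq\abs x$ established in the proof of Lemma~\ref{lemma:LengthOfProjectionsOfPowers}, recalling that $\abs g=\abs{g^{-1}}$).

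It remains to convert $\frac{M+1}{2}+\abs x$ into the stated bounds, and here the argument becomes delicate, since $b$-lengths are integers whereas these quantities are half-integers. For the bound $\frac{3M}{2}$ I would split on the parity of $M$: if $M$ is even then $\frac{M+1}{2}+\abs x\leq\frac{3M+1}{2}$ and integrality of the $b$-length forces it to be at most $\frac{3M}{2}$; if $M$ is odd then either $\abs x$ is odd, so strictness gives $b$-length $<\frac{3M+1}{2}$, hence at most $\frac{3M-1}{2}$, or $\abs x$ is even, which forces $\abs x\leq M-1$ and $\frac{M+1}{2}+\abs x\leq\frac{3M-1}{2}$. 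For the improved bound, if some $x\in H$ has $\alpha_x\neq0$ and $\abs x<M$, I would adjoin it to $S$, which does not change $M$, and use it as the distinguished element; then $\abs x\leq M-1$ yields $\frac{M+1}{2}+\abs x\leq\frac{3M-1}{2}$ at once, while $\varphi_v(x^p)$ stays short. The main obstacle is exactly this parity and integrality bookkeeping, together with the routine but necessary check that replacing the Reidemeister--Schreier generators by the reduced elements modulo $x^p$ does not alter the generated subgroup.
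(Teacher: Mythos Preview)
Your proposal is correct and follows essentially the same approach as the paper: build the Reidemeister--Schreier generating set from Lemma~\ref{lemma:GenSetOfStab}, replace each generator $x^{k_1}yx^{k_2}$ by a suitable $x^{l_1}yx^{l_2}$ via Lemma~\ref{lemma:LengthOfProjectionsOfPowers}, treat $x^p$ separately, and finish with a parity/integrality case split. The only cosmetic differences are that the paper takes $S$ to be the set of \emph{all} elements of $H$ of $b$-length at most $M$ (so the short $x$ in the ``furthermore'' clause is already in $S$, rather than being adjoined), organises the case analysis by comparing $\abs{x}$ with $M$ first and then by parity, and proves the bound $\abs{\varphi_v(x^p)}\le\abs{x}$ directly rather than extracting it from the proof of Lemma~\ref{lemma:LengthOfProjectionsOfPowers}.
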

\begin{proof}
Since $H$ is generated by elements of $b$-length at most $M$, the set $S$ of all elements of $H$ of $b$-length at most $M$ is a finite generating set for $H$. 
Let $x\in S$ be an element such that $\alpha_x\ne 0$.
Such an element must necessarily exist, since $H$ is of type~\ref{item:HNotInStab}.
By Lemma~\ref{lemma:GenSetOfStab}, the set
\[
	S'=\setst{x^{k_1}yx^{k_2}}{y\in S, k_1,k_2\in \{0,1,\dots, p-1\}, (k_1+k_2)\alpha_x+\alpha_y \equiv 0 \mod p}
\]
generates $\Stab_H(\level1)$.
Notice that by taking $k_1=p-1$, $y=x$ and $k_2=0$, we find $x^p\in S'$.

Let us fix $v\in \level{1}$.
Since the action of $G/\Stab_G(\level 1)$ on $\level{1}$ is free, we have $\Stab_H(v) = \Stab_H(\level1)$, and is thus generated by $S'$.
Using Lemma~\ref{lemma:LengthOfProjectionsOfPowers}, we can define a set $S''$ obtained from $S'$ by replacing every element of the form $x^{k_1}yx^{k_2}\in S'$ with $y\ne x$ by an element $x^{l_1}yx^{l_2}$ such that
\[\abs{\varphi_v(x^{l_1}yx^{l_2})} \leq \frac{\abs{y}+1}{2} + \abs{x},\]
with $l_1\in \{k_1,k_1-p\}$ and $l_2\in \{k_2, k_2-p\}$.
This corresponds to multiplying on the left and on the right elements of $S'$ different from $x^p$ by either $x^p$ or the identity.
Therefore, $S''$ is also a generating set of $\Stab_H(v)$.

It follows that $\varphi_v(S'')$ is a generating set of $\varphi_v(\Stab_H(v))$. Let $M'$ be the maximal $b$-length of any element in this generating set.
By construction, we have
\[M'\leq \max\Bigl\{\frac{M+1}{2}+\abs{x}\Bigm\lvert\abs{\varphi_v(x^p)}\Bigr\}.\]

Let us first show that $\abs{\varphi_v(x^p)}\leq \abs{x}$.
As in the proof of Lemma~\ref{lemma:LengthOfProjectionsOfPowers}, writing $x=a^{\alpha_x}(x_1,x_2, \dots, x_p)$, we find by direct computation that
\[\varphi_v(x^p) = x_{v+(p-1)\alpha_x}x_{v+(p-2)\alpha_x}\dots x_{v+\alpha_x}x_v.\]
Therefore, by subadditivity and Proposition~\ref{prop:LengthReductions} (\ref{item:SumOfSectionBLength}), we have
\[\abs{\varphi_v(x^p)}\leq \sum_{i=0}^{p-1}\abs{x_{v+i\alpha_x}}=\sum_{i=0}^{p-1}\abs{x_i} \leq \abs{x}.\]

Thus, we have $M' \leq \max\bigl\{\frac{M+1}{2}+\abs{x}\bigm\lvert\abs{x} = \frac{M+1}{2}+\abs{x}\bigr\}$.
Furthermore, it follows from Lemma~\ref{lemma:LengthOfProjectionsOfPowers} that this inequality is strict if $\abs{x}$ is odd.

If $\abs{x}<M$, then we have $\abs{x}\leq M-1$, since the length must be an integer.
Therefore, $\varphi_v(\Stab_H(v))$ is generated by elements of $b$-length at most $\frac{3M-1}{2}$.
This proves the second part of the lemma.
To finish the proof of the first part, it remains only to treat the case where $\abs{x}=M$.

If $\abs{x}=M$ and $M$ is odd, then $\varphi_v(\Stab_H(v))$ is generated by elements of $b$-length strictly smaller than $\frac{3M+1}{2}$.
Again, since the length must be an integer, we can say that the generators of $\varphi_v(\Stab_H(v))$ are of $b$-length at most $\frac{3M-1}{2}$.

Similarly, if $\abs{x}=M$ and $M$ is even, we can bound the $b$-length of the generators of $\varphi_v(\Stab_H(v))$ by $\frac{3M}{2}$, since $\frac{3M+1}{2}$ is not an integer.

Thus, in all cases, $\varphi_v(\Stab_H(v))$ can be generated by elements of $b$-length at most $\frac{3M}{2}$, which concludes the proof.
\end{proof}
\begin{lemma}\label{lemma:LengthProjectionsStabiliser}
Let $H < G$ be a finitely generated subgroup of type~\ref{item:HNotInStab} (according to the classification of Lemma~\ref{lemma:ClassificationSubgroups}). If $H$ is generated by elements of $b$-length at most $M$, then for every $w\in \level{2}$, the subgroup $\varphi_w(\Stab_H(w))$ is generated by elements of $b$-length at most $\frac{3M+2}{4}$.
\end{lemma}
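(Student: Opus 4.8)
The plan is to descend to the second level by applying the two preceding length-reduction lemmas in succession, once per level. I would fix $w\in\level 2$ and write $w=vu$, where $v\in\level 1$ is the first-level ancestor of $w$ and $u$ is the corresponding vertex of the first level of the subtree $T_v$. Using the chain rule $\varphi_w=\varphi_u\circ\varphi_v$ for sections along the path to $w$, together with the fact that $g\in H$ fixes $w$ precisely when $g\in\Stab_H(v)$ and $\varphi_v(g)$ fixes $u$, one checks that
\[
\varphi_w\bigl(\Stab_H(w)\bigr)=\varphi_u\bigl(\Stab_K(u)\bigr),\qquad K:=\varphi_v\bigl(\Stab_H(v)\bigr).
\]
Thus everything reduces to controlling the first-level section of the intermediate group $K$ at $u$, and for this the key point is to determine the type of $K$.

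Since the action of $G/\Stab_G(\level 1)$ on $\level 1$ is free, we have $\Stab_H(v)=\Stab_H(\level 1)$, so $K$ is exactly a first-level section of $H$. As $H$ is of type~\ref{item:HNotInStab}, it is not contained in $\Stab_G(\level 1)$, and Lemma~\ref{Lemma:GGSSections} yields a clean dichotomy: either $K=G$ or $K\leq\Stab_G(\level 1)$. In the first case $\Stab_K(u)=\Stab_G(u)$, and since $G$ is self-replicating $\varphi_u\bigl(\Stab_G(u)\bigr)=G$; hence $\varphi_w\bigl(\Stab_H(w)\bigr)=G=\gen{a,b}$ is generated by elements of $b$-length at most $1$. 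As $K=G$ forces $M\geq 1$ (otherwise all generators of $H$ would be powers of $a$ and $K$ would be trivial), this is at most $\frac{3M+2}{4}$, and the first case is done.

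In the second case $K\leq\Stab_G(\level 1)$, so $K$ is trivial or of type~\ref{item:HInStab}, and $\Stab_K(u)=K$ because every element of $K$ fixes the whole first level. The trivial case is immediate, so suppose $K$ is of type~\ref{item:HInStab}. The subgroup $\Stab_H(v)$ has index at most $p$ in the finitely generated group $H$, hence is finitely generated, and therefore so is $K$; by Lemma~\ref{lemma:LenghtReductionTypeIII}, $K$ is generated by elements of $b$-length at most $\frac{3M}{2}$. Taking for $S$ the finite set of all elements of $K$ of $b$-length at most $\frac{3M}{2}$ and applying Lemma~\ref{lemma:LengthReductionTypeII} to the type~\ref{item:HInStab} group $K$, I would conclude that $\varphi_u(K)=\varphi_w\bigl(\Stab_H(w)\bigr)$ is generated by elements of $b$-length at most $\frac{1}{2}\bigl(\frac{3M}{2}+1\bigr)=\frac{3M+2}{4}$, as claimed.

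The quantitative content is entirely inherited from the two previous lemmas, so the only genuine step is the type analysis of the intermediate group $K$; this is also where I expect the main (though mild) difficulty to lie, in the bookkeeping that identifies $\varphi_w(\Stab_H(w))$ with the first-level section of $K$ at $u$. The crucial structural input is that Lemma~\ref{Lemma:GGSSections} forbids $K$ from being of type~\ref{item:HNotInStab}, which is precisely what lets me follow the expensive type~\ref{item:HNotInStab} reduction to the first level (pushing $M$ up to $\frac{3M}{2}$) by the cheaper type~\ref{item:HInStab} reduction to the second level (roughly halving the length again).
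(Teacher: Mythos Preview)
Your proof is correct and follows essentially the same route as the paper: apply Lemma~\ref{Lemma:GGSSections} to the first-level section $K=\varphi_v(\Stab_H(v))$, dispose of the case $K=G$ directly, and in the remaining type~\ref{item:HInStab} case compose the bound $\frac{3M}{2}$ from Lemma~\ref{lemma:LenghtReductionTypeIII} with the halving bound from Lemma~\ref{lemma:LengthReductionTypeII}. Your explicit verification of the identity $\varphi_w(\Stab_H(w))=\varphi_u(\Stab_K(u))$ and your justification that $K=G$ forces $M\geq 1$ are slightly more detailed than what the paper writes out, but the argument is the same.
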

\begin{proof}
If $M=0$, then we must have either $H=\{1\}$ or $H=\gen{a}$.
Since $H$ is of type~\ref{item:HNotInStab}, the first case is impossible, and in the second case, we have $\varphi_w(\Stab_H(w))=\{1\}$ for all $w\in \level{2}$.
As the trivial subgroup is generated by elements of $b$-length at most $0$, the result holds in this case.
Therefore, for the rest of the proof, we may suppose that $M\geq 1$.

By Lemma~\ref{Lemma:GGSSections}, either $\varphi_v(\Stab_H(v))=G$ for every $v\in \level{1}$, or the subgroup $\varphi_v(\Stab_H(v))$ is of type~\ref{item:HInStab}, according to the classification of Lemma~\ref{lemma:ClassificationSubgroups}, for every $v\in \level{1}$.
In the first case, the result is true, since $\varphi_w(\Stab_H(w))=G$ for every $w\in \level{2}$ and is thus generated by elements of $b$-length at most $1 \leq \frac{3M+2}{4}$, using the fact that $M\geq 1$.
In the second case, by Lemma~\ref{lemma:LengthReductionTypeII}, it suffices to show that for every $v\in\level 1$, the subgroup $\varphi_v(\Stab_H(v))$ is generated by elements of $b$-length at most $\frac{3M}{2}$, which is the case according to Lemma~\ref{lemma:LenghtReductionTypeIII}.
\end{proof}
\begin{lemma}\label{lemma:Length1IsForever}
Let $H<G$ be a subgroup of $G$ generated by elements of $b$-length at most $1$. Then, for all vertices $v\in T$, the subgroup $\varphi_v(\Stab_H(v))$ is generated by elements of $b$-length at most $1$.
\end{lemma}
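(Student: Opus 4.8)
The plan is to prove the statement by induction on the level of the vertex $v$, but with a uniform twist: rather than fixing $H$, I would establish the statement $P(n)$ asserting that \emph{for every subgroup $K\leq G$ generated by elements of $b$-length at most $1$ and every vertex $v$ of level $n$, the section $\varphi_v(\Stab_K(v))$ is generated by elements of $b$-length at most $1$.} Proving this uniform version, ranging over all such $K$ (including $K=G$), is essential: passing to a first-level section can turn a subgroup of type~\ref{item:HNotInStab} into one of type~\ref{item:H=G} or~\ref{item:HInStab}, so the induction cannot be run for a single fixed subgroup and must quantify over all admissible $K$ at once.

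The base case $n=0$ is immediate, since $\varphi_v(\Stab_K(v))=K$ when $v$ is the root. For the inductive step, I would first record the section-composition identity: writing $v=uw$ with $u\in\level{1}$ and $w$ a vertex of $T_u\cong T$ of level $n$, and setting $K_u\coloneqq\varphi_u(\Stab_K(u))$, one has
\[\varphi_v(\Stab_K(v))=\varphi_w(\Stab_{K_u}(w)).\]
This follows because $h\in\Stab_K(v)$ if and only if $h\in\Stab_K(u)$ and $\varphi_u(h)\in\Stab(w)$ (fixing $v=uw$ forces fixing its level-$1$ ancestor $u$), together with self-similarity; the reverse inclusion uses that $\varphi_u$ maps $\Stab_K(u)$ onto $K_u$, so any $g\in\Stab_{K_u}(w)$ lifts to some $h\in\Stab_K(u)$ fixing $w$ under $\varphi_u$, hence fixing $v$.

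It then suffices to show that $K_u$ is itself generated by elements of $b$-length at most $1$, for then $P(n)$ applied to $K_u$ and the level-$n$ vertex $w$ closes the induction. This level-$1$ reduction is exactly where the earlier lemmas enter, split according to the classification of Lemma~\ref{lemma:ClassificationSubgroups}. If $K=G$ (type~\ref{item:H=G}), then $K_u=G$ by self-replicatingness and is generated by $a,b$. If $K$ is of type~\ref{item:HInStab}, then $\Stab_K(u)=K$ and Lemma~\ref{lemma:LengthReductionTypeII} with $M=1$ bounds the generators of $K_u$ by $\frac{M+1}{2}=1$. If $K$ is of type~\ref{item:HNotInStab}, then Lemma~\ref{lemma:LenghtReductionTypeIII} with $M=1$ bounds them by $\frac{3M}{2}=\frac32$, which, since the $b$-length is integer-valued, means at most $1$.

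The argument is short once these pieces are assembled, and I do not expect a serious obstacle beyond careful bookkeeping across the three subgroup types. The single point demanding attention—and the reason the preceding technical lemmas were proved with sharp constants—is that the crude type~\ref{item:HNotInStab} bound $\frac{3M}{2}$ collapses to $1$ only because $M=1$ forces $\frac32$ to round down; any weaker reduction lemma would fail to propagate the bound and the induction would not close.
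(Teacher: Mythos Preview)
Your proposal is correct and follows essentially the same route as the paper's proof: the paper also reduces by induction to the case $v\in\level{1}$ and then splits according to the classification of Lemma~\ref{lemma:ClassificationSubgroups}, invoking self-replicatingness for type~\ref{item:H=G}, Lemma~\ref{lemma:LengthReductionTypeII} for type~\ref{item:HInStab}, and Lemma~\ref{lemma:LenghtReductionTypeIII} with the integrality rounding for type~\ref{item:HNotInStab}. Your version is simply more explicit about the uniform quantification over subgroups and the section-composition identity that justify the phrase ``by induction, it suffices to prove the result for $v\in\level{1}$'', which the paper leaves tacit.
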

\begin{proof}
By induction, it suffices to prove the result for $v\in \level{1}$.

The proof depends on the type of $H$, according to the classification of Lemma~\ref{lemma:ClassificationSubgroups}.
If $H$ is of type~\ref{item:H=G}, or in other words, if $H=G$, then $\varphi_v(\Stab_H(v))=G$ for all $v\in \level{1}$, so the result is trivial in this case.
If $H$ is of type~\ref{item:HInStab}, the result immediately follows from Lemma~\ref{lemma:LengthReductionTypeII}.
Lastly, if $H$ is of type~\ref{item:HNotInStab}, then by Lemma~\ref{lemma:LenghtReductionTypeIII}, $\varphi_v(\Stab_H(v))$ is generated by elements of length at most $\frac{3}{2}$.
As the length must be an integer, the result follows.
\end{proof}
\begin{lemma}\label{lemma:CaseIIILength2}
Let $H<G$ be a subgroup of type~\ref{item:HNotInStab}, according to the classification of Lemma~\ref{lemma:ClassificationSubgroups}, generated by a finite set $S$. 
Let us assume that every element of $S$ is of $b$-length at most $2$ and of total length at most $3$.
Then $\varphi_w(\Stab_H(w))$ is generated by elements of $b$-length at most $1$ for all $w\in \level{2}$.
\end{lemma}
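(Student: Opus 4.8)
The plan is to follow the two-level strategy of Lemma~\ref{lemma:LengthProjectionsStabiliser}, but to extract one extra unit of reduction from the hypothesis $\lambda\le 3$. First I would apply the dichotomy of Lemma~\ref{Lemma:GGSSections} to $H$ (which is of type~\ref{item:HNotInStab}, hence not contained in $\Stab_G(\level 1)$): either $\varphi_v(\Stab_H(v))=G$ for all $v\in\level 1$, or $\varphi_v(\Stab_H(v))\subseteq\Stab_G(\level 1)$ for all $v\in\level 1$. In the first case $\varphi_w(\Stab_H(w))=G$ for every $w\in\level 2$, which is generated by $a$ and $b$, both of $b$-length at most $1$, so the statement holds. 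In the second case every element of $\Stab_H(\level 1)$ fixes $\level 2$, so that $\Stab_H(\level 1)=\Stab_H(\level 2)$; together with the freeness of the action on $\level 1$ this yields $\Stab_H(w)=\Stab_H(\level 1)$ and hence $\varphi_w(\Stab_H(w))=\varphi_w(\Stab_H(\level 1))$ for all $w\in\level 2$.

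The problem thus reduces to a first-level estimate: it suffices to show that, for every $v\in\level 1$, the (type~\ref{item:HInStab}) subgroup $\varphi_v(\Stab_H(v))$ is generated by elements of $b$-length at most $2$. Indeed, by Lemma~\ref{lemma:LengthReductionTypeII} applied with $M=2$ its first-level sections, which are exactly the groups $\varphi_w(\Stab_H(w))$ for $w$ a child of $v$, are then generated by elements of $b$-length at most $\frac{2+1}{2}$, that is, at most $1$ since the $b$-length is an integer. Now, if $H$ happens to contain an element $x$ with $\alpha_x\ne 0$ and $\abs x\le 1$, this bound of $2$ is exactly the conclusion of the \emph{furthermore} part of Lemma~\ref{lemma:LenghtReductionTypeIII} (with $M=2$, since then $\abs x<2$), which settles this subcase.

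It remains to treat the case where $H$ contains no element of $b$-length at most $1$ with non-trivial $\alpha$. Then every generator with $\alpha\ne 0$ has $b$-length exactly $2$ and, being of total length at most $3$, must be of the shape $b^{j_1}a^{i}b^{j_2}$ with $i\ne 0$; fix such a generator $x_0$. Following the proof of Lemma~\ref{lemma:LenghtReductionTypeIII}, $\Stab_H(\level 1)$ is generated by $x_0^{p}$ together with the elements $z=x_0^{l_1}yx_0^{l_2}$, and I would bound $\abs{\varphi_v(z)}\le 2$ directly. Writing $x_0=a^{i}(x_1,\dots,x_p)$, the shape $b^{j_1}a^{i}b^{j_2}$ forces at most two of the $x_m$ to involve a power of $b$, each of $b$-length $1$, with $\sum_m\abs{x_m}\le 2$ by Proposition~\ref{prop:LengthReductions}(\ref{item:SumOfSectionBLength}); consequently the representatives $l_1,l_2$ can be chosen (as in Lemma~\ref{lemma:LengthOfProjectionsOfPowers}) so that the two sections $\varphi_{v}(x_0^{l_1})$ and $\varphi_{v}(x_0^{l_2})$ each have $b$-length at most $1$. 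Since $\abs y\le 2$, one also has $\abs{\varphi_{v}(y)}\le 1$ by Proposition~\ref{prop:LengthReductions}(\ref{item:SectionBLength}), and moreover $\lambda(\varphi_{v}(y))\le 2$ by Proposition~\ref{prop:LengthReductions}(\ref{item:SectionTotalLength}), so the section of $y$ contains at most one isolated power of $b$.

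The hard part, and the only place where the full strength of the hypotheses is used, is to upgrade the naive estimate $\abs{\varphi_v(z)}\le 1+1+1=3$ to $\abs{\varphi_v(z)}\le 2$. I would do this by a child-by-child analysis of the cocycle decomposition $\varphi_v(z)=\varphi_{w_1}(x_0^{l_1})\,\varphi_{w_2}(y)\,\varphi_{w_3}(x_0^{l_2})$, showing that the three factors never simultaneously see a power of $b$ at the same child: the two heavy coordinates of $x_0$ are pinned down by $i,j_1,j_2$, the unique heavy coordinate of $\varphi_{w_2}(y)$ sits at a controlled position, and the key point is that $x_0^{l_1}$ and $x_0^{l_2}$ are powers of the \emph{same} element $b^{j_1}a^{i}b^{j_2}$, so that their $b$-contributions to a common child either avoid one another or cancel. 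Checking that at least one of the three factors therefore contributes $b$-length $0$ at each child gives $\abs{\varphi_v(z)}\le 2$, and, as explained above, the claim follows. This last bookkeeping is the main obstacle, and it is precisely the combination ``$b$-length $\le 2$ \emph{and} total length $\le 3$'' that keeps the three competing $b$-contributions from piling up on a single child.
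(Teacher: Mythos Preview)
Your overall architecture matches the paper exactly: apply Lemma~\ref{Lemma:GGSSections}, dispose of the case $\varphi_v(\Stab_H(v))=G$, reduce via Lemma~\ref{lemma:LengthReductionTypeII} to showing that each first-level section is generated by elements of $b$-length at most $2$, and handle the subcase where some generator with $\alpha\ne 0$ has $b$-length $\le 1$ via the ``furthermore'' clause of Lemma~\ref{lemma:LenghtReductionTypeIII}. All of this is fine.

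The gap is in the remaining subcase, where every non-trivial generator has $b$-length exactly $2$ and shape $b^{*}a^{*}b^{*}$. You correctly reach the naive bound $\abs{\varphi_v(z)}\le 1+1+1=3$ via the cocycle decomposition $\varphi_v(z)=\varphi_{w_1}(x_0^{l_1})\varphi_{w_2}(y)\varphi_v(x_0^{l_2})$, but your upgrade to $2$ is only asserted, not proved. The phrase ``their $b$-contributions to a common child either avoid one another or cancel'' is not an argument: the three factors are sections at three \emph{different} first-level vertices $w_1,w_2,v$, there is no cancellation mechanism in sight, and it is entirely possible that for a given $v$ both choices $l_i\in\{k_i,k_i-p\}$ yield $\abs{\varphi_{\bullet}(x_0^{l_i})}=1$ (this happens precisely when the two heavy coordinates of $x_0$ straddle the cut at $k_i$). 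Establishing that at least one of the three factors nonetheless has $b$-length $0$ would require a genuine combinatorial argument that you have not supplied.

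The paper avoids this difficulty by \emph{not} splitting into three factors at all. It uses the hypothesis $\lambda\le 3$ to write every generator in the exact form $b^{m}a^{\alpha}b^{n}$, and then expands
\[
x^{k_1}yx^{k_2}=b_0^{*}\,b_{\alpha_x}^{*}\cdots b_{k_1\alpha_x}^{*}\,b_{k_1\alpha_x+\alpha_y}^{*}\cdots b_{k_1\alpha_x+\alpha_y+k_2\alpha_x}^{*},
\]
where $b_l=a^lba^{-l}$. The indices lie in two arithmetic progressions modulo $p$ with common difference $\alpha_x\ne 0$: namely $0,\alpha_x,\dots,k_1\alpha_x$ and $k_1\alpha_x+\alpha_y,\dots,k_1\alpha_x+\alpha_y+k_2\alpha_x$. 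Since $p$ is prime and each progression has at most $p$ terms, the terms within each progression are pairwise distinct modulo $p$; by pigeonhole, any residue class can therefore occur at most twice in total. As $\abs{\varphi_v(b_l^{*})}=1$ only when $l\equiv v\pmod p$, this immediately gives $\abs{\varphi_v(x^{k_1}yx^{k_2})}\le 2$. This is both simpler and stronger than the route you sketched (no optimisation of $l_1,l_2$ is needed), and it is the argument you should use to close the gap.
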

\begin{proof}
Using Lemma~\ref{Lemma:GGSSections} and similarly to the proof of Lemma~\ref{lemma:LengthProjectionsStabiliser}, we can suppose that $\varphi_v(\Stab_H(v))$ is of type~\ref{item:HInStab}, according to the classification of Lemma~\ref{lemma:ClassificationSubgroups}, for every $v\in \level{1}$.

Thanks to Lemma~\ref{lemma:LengthReductionTypeII}, it suffices to show that $\varphi_v(\Stab_H(v))$ is generated by elements of $b$-length at most $2$ for all $v\in \level{1}$, since the $b$-length must be an integer.

Let us first suppose that there exists a non-trivial $x\in S$ such that $\abs{x}<2$.
Then, we cannot have $x\in G'$, since non-trivial elements of $G'$ must have $b$-length at least $2$.
Indeed, any element of $b$-length strictly less than $2$ can be written as $a^{i}b^ja^k$ with $i,j,k\in \Z$, and for this element to be in $G'$, by Proposition~\ref{Prop:Derived} (\ref{item:Abelianization}), we must have $j\equiv 0 \mod p$ and $i\equiv-k \mod p$, which forces this element to be the identity.

Therefore, $x$ does not belong to $G'$, and as $H$ is a subgroup of type~\ref{item:HNotInStab}, we conclude that $\alpha_x\ne 0$.
It then follows from Lemma~\ref{lemma:LenghtReductionTypeIII} that $\varphi_v(\Stab_H(v))$ is generated by elements of $b$-length at most $\frac{5}{2}$, and thus at most $2$, for all $v\in \level{1}$.

Thus, it only remains to show that the result holds when every non-trivial element of $S$ is of $b$-length $2$.
Let us choose some element $x\in S$ such that $\alpha_x\ne 0$.
In light of Lemma~\ref{lemma:GenSetOfStab}, we need to show that for every $v\in \level{1}$, for every $y\in S$ and for every $k_1,k_2\in \{0,1,\dots, p-1\}$ such that $(k_1+k_2)\alpha_x + \alpha_y \equiv 0 \mod p$, we have
\[\abs{\varphi_v(x^{k_1}yx^{k_2})} \leq 2.\]
Since $x$ and $y$ are of $b$-length $2$ and of total length at most $3$, we must have $x=b^{m_x}a^{\alpha_x}b^{n_x}$ and $y=b^{m_y}a^{\alpha_y}b^{n_y}$, with $m_x,n_x,m_y,n_y \in \{1,2,\dots, p-1\}$.
Therefore, we have
\[x^{k_1}yx^{k_2} = b^*_0 b^*_{\alpha_x}b^*_{2\alpha_x}\dots b^*_{k_1 \alpha_x}b^*_{k_1\alpha_x + \alpha_y}b^*_{k_1\alpha_x + \alpha_y + \alpha_x} \dots b^*_{k_1\alpha_x+\alpha_y+k_2\alpha_x},\]
where the $*$ represent unspecified powers (possibly zero) and $b_l = a^lba^{-l}$.
For every $v\in \level{1}$, we have
\[\abs{\varphi_v(b^*_l)} =
\begin{cases}
1 & \text{ if } l\equiv v \mod p \\
0 & \text{ otherwise}.
\end{cases}\]
As $\varphi_v(x^{k_1}yx^{k_2}) = \varphi_{v}(b^*_{0})\varphi_{v}(b^*_{\alpha_x})\dots \varphi_{v}(b^*_{k_1\alpha_x + \alpha_y + k_2\alpha_x})$, it suffices to show that in the sequence
\[0,\alpha_x, 2\alpha_x, \dots ,k_1\alpha_x, k_1\alpha_x+\alpha_y, k_1\alpha_x+\alpha_y+\alpha_x, \dots, k_1\alpha_x+\alpha_y+k_2\alpha_x,\]
every number (modulo $p$) appears at most twice.
Since $\alpha_x\ne 0$, using the fact that $p$ is prime and that $k_1\in \{0,1,\dots, p-1\}$, we see that $0, \alpha_x, 2\alpha_x, \dots, k_1\alpha_x$ are all pairwise distinct, modulo $p$.
Likewise, $k_1\alpha_x+\alpha_y, k_1\alpha_x+\alpha_y+\alpha_x, \dots, k_1\alpha_x+\alpha_y+k_2\alpha_x$ are also pairwise distinct modulo $p$.
Consequently, a number (modulo $p$) can appear at most twice in the sequence
\[0,\alpha_x, 2\alpha_x, \dots ,k_1\alpha_x, k_1\alpha_x+\alpha_y, k_1\alpha_x+\alpha_y+\alpha_x, \dots, k_1\alpha_x+\alpha_y+k_2\alpha_x,\]
since if it appeared three times, it would have to appear at least twice in either $0, \alpha_x, 2\alpha_x, \dots, k_1\alpha_x$ or in $k_1\alpha_x+\alpha_y, k_1\alpha_x+\alpha_y+\alpha_x, \dots, k_1\alpha_x+\alpha_y+k_2\alpha_x$, by the pigeonhole principle, which is impossible.

Thus, $\varphi_v(\Stab_H(v))$ is generated by elements of $b$-length at most $2$ for all $v\in \level{1}$, which concludes the proof.
\end{proof}
\begin{lemma}\label{lemma:CanProjectToLength1}
Let $H\leq G$ be a finitely generated subgroup of $G$. There exists some $n\in \N$ such that for every $v\in \level{n}$, the subgroup $\varphi_v(\Stab_H(v))$ is generated by elements of $b$-length at most $1$.
\end{lemma}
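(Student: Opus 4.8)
The plan is to regard the passage to sections of stabilisers as a descent process on the tree: to each vertex $v$ I attach the subgroup $K_v\coloneqq\varphi_v(\Stab_H(v))\leq G$, so that $K_{\text{root}}=H$ and, by self-similarity together with $\Stab_G(v)=\Stab_G(\level1)$, the states attached to the children of $v$ can be identified with the first-level section-stabilisers $\varphi_u(\Stab_{K_v}(u))$ of $K_v$. All these states are finitely generated, since Lemmas~\ref{lemma:LengthReductionTypeII} and~\ref{lemma:LenghtReductionTypeIII} produce explicit finite generating sets for them, and by Lemma~\ref{lemma:Length1IsForever} the property ``$K_v$ is generated by elements of $b$-length at most $1$'' is inherited by all descendants of $v$. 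Consequently it suffices to find a single level below which every state has reached this absorbing condition; since a state that reaches it stays there, I may analyse each vertex independently and at the end take the largest depth needed.

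First I would show, by strong induction on $M$, that there is a non-decreasing bound $n(M)$ such that whenever a state $K$ is generated by elements of $b$-length at most $M$, all states $n(M)$ levels below $K$ are generated by elements of $b$-length at most $2$. The base case $M\leq 2$ is trivial with $n(M)=0$. For $M\geq 3$, I distinguish the type of $K$ from Lemma~\ref{lemma:ClassificationSubgroups}: if $K=G$ its children are again $G$, of $b$-length $\leq 1$; if $K$ is of type~\ref{item:HInStab} each child is generated by elements of $b$-length at most $\tfrac{M+1}{2}$ by Lemma~\ref{lemma:LengthReductionTypeII}; and if $K$ is of type~\ref{item:HNotInStab} each grandchild is generated by elements of $b$-length at most $\tfrac{3M+2}{4}$ by Lemma~\ref{lemma:LengthProjectionsStabiliser}. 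Since the $b$-length is integer-valued and both $\tfrac{M+1}{2}$ and $\tfrac{3M+2}{4}$ are strictly smaller than $M$ for $M\geq 3$, in every case one reaches, after one or two levels, states generated by elements of $b$-length at most $M-1$, to which the induction hypothesis applies; setting $n(M)\coloneqq 2+n(M-1)$ closes the induction. Applying this with $K=H$ and $M$ the maximal $b$-length occurring in a finite generating set of $H$ yields a level $N_0$ below which every state is generated by elements of $b$-length at most $2$.

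It then remains to drive the states of $b$-length at most $2$ down to $b$-length at most $1$. States equal to $G$ are already done, and a state of type~\ref{item:HInStab} of $b$-length $\leq 2$ has all its children generated by elements of $b$-length at most $\tfrac{3}{2}$, hence at most $1$, by Lemma~\ref{lemma:LengthReductionTypeII}. The genuinely delicate case, which I expect to be the main obstacle, is a state $K$ of type~\ref{item:HNotInStab} generated by elements of $b$-length exactly $2$: here the two-level bound $\tfrac{3M+2}{4}$ of Lemma~\ref{lemma:LengthProjectionsStabiliser} equals $2$ and yields no further reduction. This is precisely the configuration handled by Lemma~\ref{lemma:CaseIIILength2}, but only once the generators are known to have total length at most $3$, so the task is to reach such generators by descending slightly further.

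To produce them I would invoke the dichotomy of Lemma~\ref{Lemma:GGSSections}: for such a type~\ref{item:HNotInStab} state $K$, the first-level section-stabilisers $\varphi_u(\Stab_K(u))$ either all equal $G$, in which case the children already have $b$-length $\leq 1$, or they are all of type~\ref{item:HInStab}, generated by elements of $b$-length at most $3$ by Lemma~\ref{lemma:LenghtReductionTypeIII}. In the latter case, passing to the children of such a type~\ref{item:HInStab} state $L$, each generator of a child is a section $\varphi_w(g)$ of a generator $g$ of $L$, and by Proposition~\ref{prop:LengthReductions} these satisfy $\abs{\varphi_w(g)}\leq\tfrac{\abs{g}+1}{2}\leq 2$ and, crucially, $\lambda(\varphi_w(g))\leq\abs{g}\leq 3$. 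Thus two levels below $K$ every state is generated by elements of $b$-length at most $2$ and total length at most $3$; applying Lemma~\ref{lemma:CaseIIILength2} to those of type~\ref{item:HNotInStab} (the others being $G$ or of type~\ref{item:HInStab}, already treated) brings them to $b$-length at most $1$ within two further levels. Hence every state of $b$-length $\leq 2$ at level $N_0$ reaches the absorbing condition within at most four additional levels, and taking $n=N_0+4$ finishes the proof. The one point requiring care is the compatibility of iterated section-stabilisers with the descent, so that the bounds compose correctly along paths; this follows from self-similarity together with the equality $\Stab_G(v)=\Stab_G(\level1)$.
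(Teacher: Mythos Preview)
Your proof is correct and follows essentially the same route as the paper's: reduce to states generated in $b$-length at most $2$ via Lemmas~\ref{lemma:LengthReductionTypeII} and~\ref{lemma:LengthProjectionsStabiliser}, then for the residual type~\ref{item:HNotInStab} case pass through one level (Lemma~\ref{lemma:LenghtReductionTypeIII}, bound $3$) and another (Proposition~\ref{prop:LengthReductions}, giving $b$-length $\leq 2$ and total length $\leq 3$) before invoking Lemma~\ref{lemma:CaseIIILength2}. The only differences are cosmetic: you make the uniform depth $n$ and the compatibility of iterated section-stabilisers explicit, whereas the paper leaves these implicit in its inductive phrasing.
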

\begin{proof}
Let us fix a finite set $S$ of generators of $H$, and let $M$ be the maximal $b$-length of elements in $S$.

If $H$ is a subgroup of type~\ref{item:H=G}, according to the classification of Lemma~\ref{lemma:ClassificationSubgroups}, then $H=G$.
In this case, the result is obviously true by taking $n=0$.
If $H$ is of type~\ref{item:HInStab}, then by Lemma~\ref{lemma:LengthReductionTypeII}, if $M>1$, the subgroups $\varphi_v(\Stab_H(v))$ are generated by elements of $b$-length strictly smaller than $M$ for every $v\in \level{1}$.
Lastly, if $H$ is of type~\ref{item:HNotInStab}, then by Lemma~\ref{lemma:LengthProjectionsStabiliser}, if $M>2$, the subgroups $\varphi_v(\Stab_H(v))=G$ are generated by elements of $b$-length strictly smaller than $M$ for every $v\in \level{2}$.

Thus, using induction and Lemma~\ref{lemma:Length1IsForever}, it suffices to prove the result for $H$ of type~\ref{item:HNotInStab} and generated by elements of $b$-length at most $2$.
Let $x\in S$ be a generator of $H$ such that $\alpha_x\ne 0$.
Since we have $\abs{x}\leq 2$, it follows from Lemma~\ref{lemma:LenghtReductionTypeIII} that $\varphi_v(\Stab_H(v))$ is generated by elements of $b$-length at most $3$ for all $v\in \level{1}$.
If $\varphi_v(\Stab_H(v))=G$ for all $v\in \level{1}$ we are done.
Otherwise, by Lemma~\ref{Lemma:GGSSections}, Lemma~\ref{lemma:LengthReductionTypeII} and Proposition~\ref{prop:LengthReductions} (\ref{item:SectionTotalAndBLength}), for every $w\in \level{2}$, the subgroup $\varphi_w(\Stab_H(w))$ is generated by elements of $b$-length at most $2$ and of total length at most $3$.
If $\varphi_w(\Stab_H(w))$ is of type~\ref{item:H=G} or~\ref{item:HInStab}, then the result follows from the argument above, and if it is of type~\ref{item:HNotInStab}, the result follows from Lemma~\ref{lemma:CaseIIILength2}.
\end{proof}
We are now missing only one piece to prove that a torsion \GGS{} group has the subgroup induction property, namely that a subgroup generated by elements of length at most $1$ must belong to every inductive class.
We prove this fact in the following two lemmas.
\begin{lemma}\label{lemma:ProjectionsInXMeansHInX}
Let $H\leq G$ be a subgroup of a torsion \GGS{} group $G$, and let $\mathcal{X}$ be an inductive class of subgroups of $G$.
If there exists $n\in \N$ such that $\varphi_v(\Stab_H(v))\in \mathcal{X}$ for all $v\in \level{n}$, then $H\in \mathcal{X}$.
\end{lemma}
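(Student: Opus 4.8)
The plan is to argue by induction on $n$, the base case $n=0$ being immediate: the only vertex of $\level{0}$ is the root, $\Stab_H(\mathrm{root})=H$, and $\varphi_{\mathrm{root}}$ is the identity, so the hypothesis reads exactly $H\in\mathcal{X}$. For the inductive step I assume the statement for $n-1$, quantified over all subgroups of $G$, and suppose $\varphi_v(\Stab_H(v))\in\mathcal{X}$ for every $v\in\level{n}$. The goal is to manufacture the first-level data needed to apply axiom~\ref{Item:DefSubgroupInduction3} to $\Stab_H(\level1)$, and then to transfer the conclusion to $H$ by~\ref{Item:WeakSubgroupInduction}.

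First I would record two structural facts. Since $G$ is a \GGS{} group, the action of $G/\Stab_G(\level1)$ on $\level1$ is free, so $\Stab_G(v)=\Stab_G(\level1)$ and therefore $\Stab_H(v)=H\cap\Stab_G(v)=\Stab_H(\level1)$ for every $v\in\level1$. Fix such a $v$ and set $H_v:=\varphi_v(\Stab_H(\level1))$, a subgroup of $G$ by self-similarity. The crux is to identify the level-$(n-1)$ sections of $H_v$ with level-$n$ sections of $H$. For a vertex $w$ of level $n-1$ in $T_v\cong T$, any $h\in\Stab_H(\level1)$ fixes $v$, so $h\cdot(vw)=v(\varphi_v(h)\cdot w)$; hence $\varphi_v(h)$ fixes $w$ if and only if $h$ fixes $vw$. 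As fixing $vw$ forces fixing $v$, we get $\{h\in\Stab_H(\level1)\mid h\cdot vw=vw\}=\Stab_H(vw)$, and applying $\varphi_v$ yields $\Stab_{H_v}(w)=\varphi_v(\Stab_H(vw))$. Composing with $\varphi_w$ and using the section identity $\varphi_w\circ\varphi_v=\varphi_{vw}$ gives
\[\varphi_w(\Stab_{H_v}(w))=\varphi_{vw}(\Stab_H(vw)).\]
Since $vw\in\level{n}$, the right-hand side lies in $\mathcal{X}$ by hypothesis. Thus every level-$(n-1)$ section of $H_v$ belongs to $\mathcal{X}$, and the induction hypothesis gives $H_v\in\mathcal{X}$ for each $v\in\level1$.

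It remains to reassemble the $H_v$. As $\Stab_H(\level1)\le\Stab_G(\level1)$ and its first-level sections are exactly $\varphi_v(\Stab_H(\level1))=H_v\in\mathcal{X}$, axiom~\ref{Item:DefSubgroupInduction3} gives $\Stab_H(\level1)\in\mathcal{X}$; here one invokes that $\Stab_H(\level1)$ is finitely generated, which holds whenever $H$ is, since $\Stab_H(\level1)=H\cap\Stab_G(\level1)$ has index at most $p$ in $H$ (this is the situation in all the intended applications). Finally $[H:\Stab_H(\level1)]\le p<\infty$, so~\ref{Item:WeakSubgroupInduction} upgrades $\Stab_H(\level1)\in\mathcal{X}$ to $H\in\mathcal{X}$, completing the induction. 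I expect the main obstacle to be the careful bookkeeping of the section-composition identity together with the freeness of the level-$1$ action, which is precisely what makes $\Stab_H(v)$ independent of $v\in\level1$ and lets the level-$n$ data of $H$ feed the level-$(n-1)$ data of the $H_v$; the only hypothesis-level subtlety is securing finite generation of $\Stab_H(\level1)$ so that axiom~\ref{Item:DefSubgroupInduction3} is applicable.
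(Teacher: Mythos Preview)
Your proof is correct and follows essentially the same inductive strategy as the paper's proof: both argue by induction on $n$, use the section-composition identity $\varphi_w\circ\varphi_v=\varphi_{vw}$ together with the freeness of the level-$1$ action to transfer the level-$n$ hypothesis, and then close up with axioms~\ref{Item:DefSubgroupInduction3} and~\ref{Item:WeakSubgroupInduction}. The only cosmetic difference is the order in which the two reductions are performed: you first pass to the level-$1$ sections $H_v=\varphi_v(\Stab_H(\level1))$ and then invoke the inductive hypothesis at level $n-1$ on each $H_v$, whereas the paper first passes to level-$n$ sections and then applies~\ref{Item:DefSubgroupInduction3} at level $1$ before invoking the inductive hypothesis at level $n$. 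Your remark that axiom~\ref{Item:DefSubgroupInduction3} requires finite generation of $\Stab_H(\level1)$ is well taken; the paper's proof leaves this hypothesis implicit, and indeed the lemma is only ever applied to finitely generated $H$.
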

\begin{proof}
We proceed by induction on $n$.
For $n=0$, the result is trivially true.
Let us suppose that the result is true for some $n\in \N$, and let $H\leq G$ be a subgroup such that $\varphi_w(\Stab_H(w))\in \mathcal{X}$ for all $w\in \level{n+1}$.
For each $v\in \level{n}$, let us write $H_v=\varphi_v(\Stab_H(v))$.
Our assumptions on $H$ imply that for every $v\in \level{n}$ and for every $u\in \level{1}$, we must have $\varphi_u(\Stab_{H_v}(u)) \in \mathcal{X}$.
Using Property~\ref{Item:DefSubgroupInduction3} of Definition~\ref{Definition:StrongSubgroupInduction} and the fact that $\Stab_{H_v}(\level1)=\Stab_{H_v}(u)$, we conclude that $\Stab_{H_v}(\level1)\in \mathcal{X}$.
Since $\Stab_{H_v}(\level1)$ is of finite index in $H_v$, we must also have $H_v\in \mathcal{X}$ by Property~\ref{Item:DefSubgroupInduction2} of Definition~\ref{Definition:StrongSubgroupInduction}.

We have just shown that for every $v\in \level{n}$, we have $\varphi_v(\Stab_H(v))\in \mathcal{X}$.
Thus, by our induction hypothesis, we have $H\in \mathcal{X}$.
\end{proof}
\begin{lemma}\label{lemma:Length1InX}
Let $\mathcal{X}$ be an inductive class, and let $H\leq G$ be a subgroup generated by elements of $b$-length at most $1$. Then, $H\in \mathcal{X}$.
\end{lemma}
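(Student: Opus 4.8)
The plan is to show that within at most two levels of the tree, every subgroup generated by elements of $b$-length at most $1$ has all its sections among the manifestly $\mathcal{X}$-admissible subgroups $G$, $\gen{a}$, $\gen{b}$ and $\{1\}$, and then to invoke Lemma~\ref{lemma:ProjectionsInXMeansHInX}. First I would record two preliminary facts. Writing an element of $b$-length at most $1$ as $a^{i}b^{j}a^{-i}\cdot a^{m}=b_l^{j}a^{m}$ with $b_l:=a^{l}ba^{-l}$, and using that $a$ and $b$ have finite order since $G$ is torsion, there are only finitely many such elements; hence $H$ is finitely generated, as are all of its sections (so Lemma~\ref{lemma:ProjectionsInXMeansHInX} will always apply). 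Moreover, Property~\ref{Item:DefSubgroupInduction1} gives $\{1\},G\in\mathcal{X}$, and Property~\ref{Item:WeakSubgroupInduction} shows that every finite subgroup lies in $\mathcal{X}$, since it contains $\{1\}$ with finite index; in particular $\gen{a},\gen{b}\in\mathcal{X}$.

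The heart of the argument is the case $H\leq\Stab_G(\level 1)$ (type~\ref{item:HInStab} of Lemma~\ref{lemma:ClassificationSubgroups}). Any generator of $b$-length at most $1$ that lies in $\Stab_G(\level1)$ must be of the form $b_l^{j}$, and a direct computation from the recursion $b=(a^{e_0},\dots,a^{e_{p-2}},b)$ gives $\varphi_v(b_l^{j})=\varphi_{v-l}(b)^{j}$, which is therefore a power of $a$ \emph{or} a power of $b$, never a mixed product. Since $H$ fixes the first level we have $\Stab_H(v)=H$, so for each $v\in\level 1$ the section $\varphi_v(\Stab_H(v))=\varphi_v(H)$ is generated by powers of $a$ and powers of $b$. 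As $p$ is prime and $G=\gen{a,b}$, such a subgroup can only be $G$, $\gen{a}$, $\gen{b}$ or $\{1\}$, all of which lie in $\mathcal{X}$; Lemma~\ref{lemma:ProjectionsInXMeansHInX} applied with $n=1$ then yields $H\in\mathcal{X}$.

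The remaining cases reduce to this one. If $H=G$ (type~\ref{item:H=G}) there is nothing to prove. If $H\not\leq\Stab_G(\level1)$ (type~\ref{item:HNotInStab}), I would apply the dichotomy of Lemma~\ref{Lemma:GGSSections}: either all first-level sections of $H$ equal $G$, so that Lemma~\ref{lemma:ProjectionsInXMeansHInX} with $n=1$ concludes at once, or every first-level section $\varphi_v(\Stab_H(v))$ is contained in $\Stab_G(\level1)$. In the latter situation each such section is, by Lemma~\ref{lemma:Length1IsForever}, again generated by elements of $b$-length at most $1$ while lying inside $\Stab_G(\level1)$, so the type~\ref{item:HInStab} analysis of the previous paragraph shows it belongs to $\mathcal{X}$. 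Since all first-level sections of $H$ are then in $\mathcal{X}$, a final application of Lemma~\ref{lemma:ProjectionsInXMeansHInX} with $n=1$ gives $H\in\mathcal{X}$.

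The only genuine computation, and hence the main obstacle, is the section identity $\varphi_v(b_l^{j})=\varphi_{v-l}(b)^{j}$ together with the identification of $\varphi_w(b)$ as an element of $\{a^{e_0},\dots,a^{e_{p-2}},b\}$; once this is in place, the observation that a subgroup generated by powers of $a$ and powers of $b$ is forced to be one of $G$, $\gen{a}$, $\gen{b}$, $\{1\}$ follows immediately from the primality of $p$ and from $G=\gen{a,b}$, and the rest is bookkeeping with the axioms of a weakly inductive class. I do not expect any termination or descent difficulty here, precisely because the dichotomy of Lemma~\ref{Lemma:GGSSections} collapses the type~\ref{item:HNotInStab} case onto the type~\ref{item:HInStab} case after a single level rather than requiring an unbounded induction.
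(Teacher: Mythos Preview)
Your proof is correct and follows essentially the same approach as the paper's: classify $H$ by its type, handle type~\ref{item:HInStab} by showing first-level sections are among $\{1\}$, $\gen{a}$, $\gen{b}$, $G$, and reduce type~\ref{item:HNotInStab} to type~\ref{item:HInStab} after one level via Lemma~\ref{Lemma:GGSSections}. The only cosmetic differences are that the paper cites Proposition~\ref{prop:LengthReductions}~(\ref{item:SectionTotalAndBLength}) rather than computing $\varphi_v(b_l^{j})$ explicitly, and invokes Lemma~\ref{lemma:LenghtReductionTypeIII} directly instead of the wrapper Lemma~\ref{lemma:Length1IsForever} for the type~\ref{item:HNotInStab} case.
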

\begin{proof}
We begin by observing that if $H$ is generated by elements of total length~$1$, then $H\in \mathcal{X}$.
Indeed, in this case, we have either $H=G$, $H=\gen{a}$ or $H=\gen{b}$. Since both $\gen{a}$ and $\gen{b}$ are finite subgroups, the result follows.

If $H$ is not generated by elements of total length $1$, then $H$ must be of type~\ref{item:HInStab} or~\ref{item:HNotInStab}, according to the classification of Lemma~\ref{lemma:ClassificationSubgroups}.
If $H$ is of type~\ref{item:HInStab}, then for every $v\in \level{1}$, the subgroup $\varphi_v(\Stab_H(v))$ is generated by elements of total length $1$ by Proposition~\ref{prop:LengthReductions} (\ref{item:SectionTotalAndBLength}), and thus belongs to $\mathcal{X}$ by the previous argument.
It follows that $H\in \mathcal{X}$.

Lastly, if $H$ is of type~\ref{item:HNotInStab}, then by Lemma~\ref{lemma:LenghtReductionTypeIII}, $\varphi_v(\Stab_H(v))$ is generated by elements of $b$-length at most $\frac{3}{2}$, and thus at most $1$, for every $v\in \level{1}$.
As $\varphi_v(\Stab_H(v))$ is either $G$ or of type~\ref{item:HInStab} by Lemma~\ref{Lemma:GGSSections}, the preceding argument shows that $\varphi_v(\Stab_H(v))\in \mathcal{X}$ for every $v\in \level{1}$.
Therefore, we must have $H\in \mathcal{X}$ by Lemma~\ref{lemma:ProjectionsInXMeansHInX}.
\end{proof}
We are now finally ready to prove Theorem~\ref{Thm:Intro3}.
\begin{theorem}\label{thm:GGSHaveSubgroupInduction}
Every torsion \GGS{} group has the subgroup induction property.
\end{theorem}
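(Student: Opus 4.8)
The goal is to verify the strong subgroup induction property for a torsion GGS group $G$. Recall the definition: a class $\mathcal{X}$ is weakly inductive if it contains $\{1\}$ and $G$, is closed under passing from a finite-index subgroup $H$ to an overgroup $L$, and contains any finitely generated $H \leq \Stab_G(\level 1)$ all of whose first-level sections lie in $\mathcal{X}$. To prove the strong property, I must show that for an arbitrary weakly inductive class $\mathcal{X}$, every finitely generated subgroup $H \leq G$ belongs to $\mathcal{X}$.

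The plan is to combine the length-reduction machinery with the "small-length implies in $\mathcal{X}$" results that have just been proved. Concretely, let $H \leq G$ be an arbitrary finitely generated subgroup and fix a weakly inductive class $\mathcal{X}$. First I would invoke Lemma~\ref{lemma:CanProjectToLength1}, which produces an integer $n \in \N$ such that for every vertex $v \in \level{n}$, the section $\varphi_v(\Stab_H(v))$ is generated by elements of $b$-length at most $1$. This is the decisive structural input: it says that after descending finitely many levels, every surviving section is of the simplest possible form.

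Next, for each such vertex $v \in \level{n}$, Lemma~\ref{lemma:Length1InX} applies directly to the subgroup $\varphi_v(\Stab_H(v))$, since it is generated by elements of $b$-length at most $1$; hence $\varphi_v(\Stab_H(v)) \in \mathcal{X}$ for every $v \in \level{n}$. With this in hand, I would apply Lemma~\ref{lemma:ProjectionsInXMeansHInX}, whose hypothesis is exactly that there exists $n$ with $\varphi_v(\Stab_H(v)) \in \mathcal{X}$ for all $v \in \level{n}$, to conclude that $H \in \mathcal{X}$. Since $H$ and $\mathcal{X}$ were arbitrary, this establishes the strong subgroup induction property.

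The proof itself is therefore a short three-line assembly; all the genuine difficulty has been front-loaded into the preceding lemmas, and the main conceptual obstacle was the careful $b$-length bookkeeping in Lemmas~\ref{lemma:LenghtReductionTypeIII}, \ref{lemma:LengthProjectionsStabiliser} and \ref{lemma:CaseIIILength2} that guarantees the length genuinely drops (or stabilizes at $1$) under taking sections, using that $p$ is prime and the pigeonhole argument on the arithmetic progressions $0, \alpha_x, 2\alpha_x, \dots$ modulo $p$. The one point I would double-check in writing up the final theorem is that Lemma~\ref{lemma:Length1InX} and Lemma~\ref{lemma:ProjectionsInXMeansHInX} are stated for weakly inductive classes (they are), so that the argument yields the \emph{strong} property rather than merely the ordinary one; the torsion hypothesis enters implicitly through the classification of Lemma~\ref{lemma:ClassificationSubgroups} and the section dichotomy of Lemma~\ref{Lemma:GGSSections}, both of which are specific to torsion GGS groups.
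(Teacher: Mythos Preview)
Your proposal is correct and follows exactly the same three-step assembly as the paper's proof: apply Lemma~\ref{lemma:CanProjectToLength1} to reach level-$n$ sections generated in $b$-length at most $1$, then Lemma~\ref{lemma:Length1InX} to place each section in $\mathcal{X}$, and finally Lemma~\ref{lemma:ProjectionsInXMeansHInX} to pull $H$ itself into $\mathcal{X}$. Your additional remarks about checking that the lemmas are stated for weakly inductive classes and about where the torsion hypothesis enters are accurate and match the paper's setup.
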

\begin{proof}
The proof follows the same general strategy as in~\cite{MR3513107}.

Let $G$ be a torsion \GGS{} group, and let $\mathcal{X}$ be an inductive class of subgroups.
We need to show that every finitely generated subgroup of $G$ belongs to $\mathcal{X}$.

Let $H\leq G$ be a finitely generated subgroup.
By Lemma~\ref{lemma:CanProjectToLength1}, there exists $n\in \N$ such that $\varphi_v(\Stab_H(v))$ is generated by elements of $b$-length at most $1$ for all $v\in \level{n}$.
Lemma~\ref{lemma:Length1InX} then implies that $\varphi_v(\Stab_H(v))\in \mathcal{X}$ for all $v\in \level{n}$.
Therefore, $H\in \mathcal{X}$ by Lemma~\ref{lemma:ProjectionsInXMeansHInX}.
\end{proof}
As a corollary, we get that all torsion \GGS{} groups are subgroup separable.
\begin{corollary}
Every torsion \GGS{} group is subgroup separable.
\end{corollary}
\begin{proof}
This is a direct application of Theorems~\ref{Thm:Intro3} and~\ref{Thm:IntroStrong}.
More precisely, in order to apply Theorem~\ref{Thm:IntroStrong}, we use the fact that torsion \GGS{} groups are finitely generated, self-similar, branch, and have the subgroup induction property.
\end{proof}

There are other consequences of the subgroup induction property that could be of interest for the study of torsion \GGS{} groups.
For instance, the structure of their finitely generated subgroups can be described using the notion of blocks, and one can classify their weakly maximal subgroups.
We refer the interested reader to~\cite{GLN2019,L2019,FGLN2023} for more information about these consequences.

\end{document}